\newtheorem*{remark}{\bf Remark}
\theoremstyle{plain}
\newtheorem{theorem}{\bf Theorem}[section]
\newtheorem{proposition}[theorem]{\bf Proposition}
\newtheorem{definition}[theorem]{\bf Definition}
\newtheorem{Theorem}{\bf Theorem}
\newtheorem*{claim}{\bf Claim}
\newtheorem{lemma}[theorem]{\bf Lemma}
\newtheorem{question}{\bf Question}
\def\C{{\mathbb C}}
\def\R{{\mathbb R}}
\def\B{\mathbb{B}}
\def\D{\mathbb{D}}
\def\p{\mathbb{P}}
\def\supp{\textup{supp}}
\def\un{\underline{n}}
\title[Dynamical pairs]{Dynamical pairs with an absolutely continuous bifurcation measure}
\author{Thomas Gauthier}
\address{CMLS, \'Ecole Polytechnique, Institut Polytechnique de Paris, 91128 Palaiseau Cedex, France}
\email{thomas.gauthier@polytechnique.edu}
\begin{document}

\begin{abstract}
In this article, we study algebraic dynamical pairs $(f,a)$  parametrized by an irreducible quasi-projective curve $\Lambda$ having an absolutely continuous bifurcation measure. We prove that, if $f$ is non-isotrivial and $(f,a)$ is unstable, this is equivalent to the fact that $f$ is a family of Latt\`es maps.
To do so, we prove the density of transversely prerepelling parameters in the bifurcation locus of $(f,a)$ and a similarity property, at any transversely prerepelling parameter $\lambda_0$, between the measure $\mu_{f,a}$ and the maximal entropy measure of $f_{\lambda_0}$.
We also establish an equivalent result for dynamical pairs of $\p^k$, under an additional mild assumption.
\end{abstract}

\begin{altabstract}
Dans cet article, nous \'etudions les paires dynamiques $(f,a)$ alg\'ebriques param\'etr\'ees par une courbe quasi-projective irr\'eductible poss\'edant une mesure de bifurcation absolument continue. Nous prouvons que, si la famille $f$ n'est pas isotriviale et si la paire $(f,a)$ est instable, c'est \'equivalent au fait que la famille $f$ soit une famille d'exemples de Latt\`es flexibles. A cette fin, nous montrons la densit\'e des param\`etres transversalement pr\'er\'epulsifs dans le lieu de bifurcation de la paire $(f,a)$, ainsi qu'une propri\'et\'e de similarit\'e, en un param\`etre transversalement pr\'er\'epulsif $\lambda_0$, entre la mesure de bifurcation $\mu_{f,a}$ et la mesure d'entropie maximale de $f_{\lambda_0}$. Sous une hypoth\`ese relativement g\'en\'erale, nous \'etablissons \'egalement un r\'esultat similaire pour les paires dynamiques de $\mathbb{P}^k$.
\end{altabstract}

\maketitle

\section*{Introduction}

Let $\Lambda$ be a complex manifold. A \emph{dynamical pair} $(f,a)$ parametrized by $\Lambda$ is  a holomorphic family $f : \Lambda \times\p^{1}\to\p^{1}$ of rational maps of degree $d\geq2$, i.e. $f$ is a holomorphic and $f_\lambda$ is a degree $d$ rational map for all $\lambda\in\Lambda$,
together with a marked point $a$, i.e. a holomorphic map $a:\Lambda\to\p^{1}$.

\smallskip

Recall that a dynamical pair $(f,a)$ of the Riemann sphere is \emph{stable} if the sequence $\{\lambda\mapsto f_\lambda^n(a(\lambda))\}_{n\geq1}$ is a normal family on $\Lambda$. Otherwise, we say  that the pair $(f,a)$ is \emph{unstable}. Recall also that $f$ is \emph{isotrivial} if there exists a branched cover $X\to\Lambda$ and a holomorphic family of M\"obius transformations $M:X\times\p^1\to\p^1$ so that $M_\lambda\circ f_\lambda\circ M_\lambda^{-1}:\p^1\to\p^1$ is independent of the parameter $\lambda$ and that the pair  $(f,a)$ is \emph{isotrivial} if, in addition, $M_\lambda(a(\lambda))$ is also independent of the parameter $\lambda$.

~

When a dynamical pair $(f,a)$ is unstable, the \emph{stability locus} $\mathrm{Stab}(f,a)$ is the set of points $\lambda_0\in\Lambda$ admitting a neighborhood $U$ on which the pair $(f,a)$ the sequence $\{\lambda\mapsto f_\lambda^n(a(\lambda))\}_{n\geq1}$ is a normal family.
The \emph{bifurcation locus} $\mathrm{Bif}(f,a)$ of the pair $(f,a)$ is its complement $\mathrm{Bif}(f,a):=\Lambda\setminus\mathrm{Stab}(f,a)$. 
If $a$ is the marking of a critical point, i.e. $f_\lambda'(a(\lambda))=0$ for all $\lambda\in\Lambda$, it is classical that the bifurcation locus $\mathrm{Bif}(f,a)$ has empty interior, \cite{MSS}. However, when $f$ is not a family of polynomials and $a$ is not a marked critical point, $\mathrm{Bif}(f,a)$ can have non-empty interior. For instance, if $f$ is an isotrivial family with $f_\lambda=f_{\lambda'}$ for all $\lambda,\lambda'\in\Lambda$ and $J_f=\p^1$, then $\mathrm{Bif}(f,a)$ is either empty or the whole parameter space $\Lambda$. In fact, we can describe precisely when $\mathrm{Bif}(f,a)$ can have non-empty interior.

We say that a family $f:\Lambda\times\p^1\longrightarrow \p^1$ of degree $d$ rational maps of $\p^1$ is \emph{$J$-stable} if all the repelling cycles can be followed holomorphically throughout the whole family $\Lambda$, i.e. if for all $n\geq1$, there exists $N\geq0$ and holomorphic maps $z_1,\ldots,z_N:\Lambda\rightarrow\p^1$ such that $\{z_1(\lambda),\ldots,z_N(\lambda)\}$ is exactly the set of all repelling cycles of $f_\lambda$ of exact period $n$ for all $\lambda\in\Lambda$. Note that this is equivalent to the fact that all critical points are stable~\cite{MSS}. We prove the following:

\begin{Theorem}\label{prop:totalimpliesJstable}
Let $(f,a)$ be a dynamical pair of degree $d$ of the Riemann sphere $\p^1$ parametrized by a one-dimensional complex manifold $\Lambda$. Assume that $\mathrm{Bif}(f,a)=\Lambda$. Then $f$ is $J$-stable and
\begin{itemize}
\item either $f$ is isotrivial,
\item or $J_{f_\lambda}=\p^1$ and $f_\lambda$ carries an invariant linefield for any $\lambda\in\Lambda$.
\end{itemize}
\end{Theorem}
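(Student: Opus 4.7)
The strategy I would take is to decouple the theorem into two reductions: first force $J_{f_\lambda}=\p^1$ at every parameter at which $f$ is $J$-stable, then on the $J$-stability locus invoke a rigidity theorem to classify $f$ as either isotrivial or a family of flexible Latt\`es maps, and finally propagate these properties algebraically to all of $\Lambda$ in order to deduce global $J$-stability.

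For the first reduction, by Ma\~n\'e-Sad-Sullivan the $J$-stability locus $\Omega_J\subset\Lambda$ is open and dense. Fix $\lambda_0\in\Omega_J$ and suppose $J_{f_{\lambda_0}}\neq\p^1$. In a small neighborhood $U$ of $\lambda_0$ the Julia and Fatou sets of $f_\lambda$ vary continuously through the holomorphic motion realising $J$-stability, so one finds an open disk $V\subset F_{f_{\lambda_0}}$ and a subneighborhood $U'\subset U$ with $V\subset F_{f_\lambda}$ for every $\lambda\in U'$. If $a(\lambda_1)\in F_{f_{\lambda_1}}$ for some $\lambda_1\in U'$, openness of the total Fatou set in $U'\times\p^1$ forces $a(\lambda)\in F_{f_\lambda}$ near $\lambda_1$, and the usual equicontinuity of iteration on Fatou components makes $\{f^n_\lambda(a(\lambda))\}_n$ normal around $\lambda_1$, contradicting $\mathrm{Bif}(f,a)=\Lambda$. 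Hence $a(\lambda)\in J_{f_\lambda}$ throughout $U'$, so $f^n_\lambda(a(\lambda))\in J_{f_\lambda}\subset\p^1\setminus V$ for every $n$, and Montel's theorem yields normality of the orbit family on $U'$---another contradiction. Thus $J_{f_\lambda}=\p^1$ everywhere on $\Omega_J$.

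On $\Omega_J$, the family is then $J$-stable with Julia set $\p^1$, and the holomorphic motion realising this is a quasi-conformal conjugacy between $f_{\lambda_0}$ and $f_\lambda$ on the whole of $\p^1$ whose Beltrami differential $\mu_\lambda$ is $f_{\lambda_0}$-invariant. Either $\mu_\lambda\equiv 0$, in which case the motion is by M\"obius transformations and $f$ is isotrivial---the ``trivial'' case of the statement, up to a branched base change trivialising the M\"obius family---or $\mu_\lambda\not\equiv 0$ for some $\lambda$, and the complex direction of $\mu_\lambda$ defines an $f_{\lambda_0}$-invariant line field on $J_{f_{\lambda_0}}=\p^1$; McMullen's rigidity theorem then forces $f_{\lambda_0}$, and, by conjugation, every $f_\lambda$, to be a flexible Latt\`es map, in which case $J_{f_\lambda}=\p^1$ and $f_\lambda$ carries an invariant line field for every $\lambda$. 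Both alternatives are algebraic, so irreducibility of $\Lambda$ and algebraicity of $f$ propagate them from $\Omega_J$ to all of $\Lambda$; and since isotrivial and Latt\`es families are $J$-stable throughout $\Lambda$, global $J$-stability follows.

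The main obstacle is the rigidity step: one must extract a genuine $f_{\lambda_0}$-invariant Beltrami coefficient from the abstract holomorphic motion on $\Omega_J$, carefully separate a purely M\"obius motion from one that actually carries a line field, and apply McMullen's classification---originally stated for a single rational map---in the setting of an algebraic one-parameter family.
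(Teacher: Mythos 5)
The decisive gap is in your rigidity/propagation step. The assertion that a rational map carrying an invariant line field on $J_{f_{\lambda_0}}=\p^1$ must be a flexible Latt\`es map is not a theorem: it is McMullen's No Invariant Line Fields Conjecture, which is open (the paper points this out explicitly, and this is exactly why the statement you are proving concludes only ``invariant line field'', not Latt\`es). You do not need the Latt\`es conclusion for the statement, but your entire derivation of global $J$-stability rests on it: you propagate ``isotrivial or flexible Latt\`es'' from the $J$-stability locus $\Omega_J$ to all of $\Lambda$ using algebraicity and irreducibility of $\Lambda$, and then invoke stability of such families. Besides resting on the conjecture, this propagation also uses hypotheses the theorem does not grant: here $\Lambda$ is only a one-dimensional complex manifold and $f$ a holomorphic family, so there is no algebraic structure to propagate along (and applying McMullen's actual theorem on stable algebraic families would require precisely the algebraic, globally $J$-stable setting you are trying to reach). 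As a consequence you never prove that $f$ is $J$-stable on $\Lambda$, nor that $J_{f_\lambda}=\p^1$ and the line field exist at \emph{every} $\lambda\in\Lambda$: you only obtain this on the a priori open dense set $\Omega_J$. The paper avoids the conjecture by arguing directly: if $f$ were not $J$-stable, some open set of parameters would carry a persistent attracting cycle of period $\geq 3$; since $\mathrm{Bif}(f,a)=\Lambda$, Montel forces the orbit of $a$ to hit that cycle at some parameter, and the key lemma (for $J$-stable families, $\mathrm{Bif}(f,a)=\{\lambda \,:\ a(\lambda)\in J_{f_\lambda}\}$) then places an attracting periodic point on the Julia set, a contradiction. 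Once $J$-stability is known, the same lemma excludes attracting cycles, superattracting cycles and rotation domains at every parameter, so $J_{f_\lambda}=\p^1$ by Sullivan's theorem, and Lemma V.1 of MSS produces the invariant line field when $f$ is not trivial. That lemma is itself the nontrivial input (its proof uses density of transversely prerepelling parameters, holomorphic motions and a Rouch\'e argument), and you implicitly use its content when you pass from ``$a(\lambda_1)$ lies in the Fatou set'' to stability of the pair.

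A smaller but real flaw in the first reduction: ``equicontinuity of iteration on Fatou components'' concerns normality in the dynamical variable $z$ for each fixed $\lambda$, and does not by itself give normality of the parameter family $\lambda\mapsto f_\lambda^n(a(\lambda))$. The conclusion you want (if $a(\lambda)$ stays in the Fatou set on an open subset of $\Omega_J$, then the pair is stable there) is true, but needs an argument in the parameter, for instance Montel applied after observing that the orbit avoids three holomorphically moving repelling periodic points furnished by $J$-stability. With that repaired, your conclusion that $J_{f_\lambda}=\p^1$ on $\Omega_J$, and the dichotomy ``M\"obius motion versus invariant line field'' there, are sound; the unproved part is everything after it, namely that $\Omega_J=\Lambda$ and that the dichotomy holds at every parameter.
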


\smallskip

The bifurcation locus of a pair $(f,a)$ is the support of natural a positive (finite) measure: the \emph{bifurcation measure} $\mu_{f,a}$ of the pair $(f,a)$, see Section \ref{sec:prelim} for a precise definition. The properties of this measure appear to be very important for studying arithmetic and dynamical properties of the pair $(f,a)$, see e.g. \cite{BD-unlikely,BD,DeMarco-heights,DeMarco-Mavraki,DWY,favregauthier,specialcubic,Conti}.
Note also that the entropy theory of dynamical pairs has been recently developed in~\cite{dTGV}.

We will say that a dynamical pair $(f,a)$ parametrized by $\Lambda$ is \emph{algebraic} if $\Lambda$ is a quasi-projective variety, if $f:\Lambda\times\p^1\to\p^1$ is a morphism and if $a:\Lambda\to\p^1$ is a rational function on $\Lambda$. An important result of DeMarco \cite{DeMarco-heights} states that any stable algebraic pair is either isotrivial or preperiodic, i.e. there exists $n>m\geq0$ such that $f_\lambda^n(a(\lambda))=f_\lambda^m(a(\lambda))$ for all $\lambda\in\Lambda$. 
In the present article, we study algebraic dynamical pairs having an absolutely continuous bifurcation measure.

\medskip

Assume that for some parameter $\lambda_0\in\Lambda$, the marked point $a$ eventually lands on a repelling periodic point $x$, that is $f_{\lambda_0}^{n}(a(\lambda_0))=x$. Let $x(\lambda)$ be the (local) natural continuation of $x$ as a periodic point of $f_\lambda$. We say that $a$ is \emph{transversely prerepelling} at $\lambda_0$ if the graphs of $\lambda\mapsto f_\lambda^n(a(\lambda))$ and $\lambda\mapsto x(\lambda)$, as subsets of $\Lambda\times\mathbb{P}^1$, are transverse at $\lambda_0$.

\medskip

Finally, recall that a rational map $f:\p^1\to\p^1$ is a \emph{Latt\`es map} if there exists an elliptic curve $E$, an endomorphism $L:E\to E$ and a finite branched cover $p:E\to\p^1$ such that $p\circ L=f\circ p$ on $E$.
Such a map has an absolutely continuous maximal entropy measure, see~\cite{Zdunik}.  In addition, when $f$ is a family of Latt\`es maps and the pair $(f,a)$ is unstable, then $\mathrm{Bif}(f,a)=\Lambda$, see e.g. \cite[\S 6]{DeMarco-Mavraki} or, e.g., Lemma~\ref{lm:bifJstab} for another proof. 

Our main result is the following.

\begin{Theorem}\label{tm:rigidLattes}
Let $(f,a)$ be an algebraic dynamical pair of $\mathbb{P}^1$ of degree $d\geq2$ parametrized by an irreducible quasi-projective curve $\Lambda$. Assume that $f$ is non-isotrivial and that $(f,a)$ is unstable. The following assertions are equivalent: 
\begin{enumerate}
\item The bifurcation locus of the dynamical pair $(f,a)$ is $\mathrm{Bif}(f,a)=\Lambda$,
\item Transversely prerepelling parameters are dense in $\Lambda$,
\item The measure $\mu_{f,a}$ is absolutely continuous with continuous Radon-Nikodym derivative outside a finite set,
\item The family $f$ is a family of Latt\`es maps.
\end{enumerate}
\end{Theorem}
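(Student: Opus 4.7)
The plan is to close the loop among the four conditions using $(4) \Rightarrow (1) \Leftrightarrow (2)$ together with $(1) \Leftrightarrow (4)$ and $(3) \Leftrightarrow (4)$. The implication $(4) \Rightarrow (1)$ is the already-recalled statement from \cite{DeMarco-Mavraki}. The remaining implications rest on the two ingredients highlighted in the abstract and proved separately in the paper: the density of transversely prerepelling parameters in $\mathrm{Bif}(f,a)$, and the similarity between $\mu_{f,a}$ near $\lambda_0$ and the maximal entropy measure $\mu_{f_{\lambda_0}}$ near the corresponding repelling periodic point.

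The equivalence $(1) \Leftrightarrow (2)$ would be essentially formal: any transversely prerepelling parameter lies in $\mathrm{Bif}(f,a)$ (the orbit of $a$ accumulates transversely on a repelling cycle there), and $\mathrm{Bif}(f,a)$ is closed, so $(2) \Rightarrow (1)$ is immediate; the reverse is the density statement applied to the whole parameter space. For $(1) \Rightarrow (4)$ I would invoke Theorem~\ref{prop:totalimpliesJstable}: non-isotriviality rules out the trivial case, so for each $\lambda \in \Lambda$ the map $f_\lambda$ has Julia set $\mathbb{P}^1$ and an invariant line field on its Julia set. McMullen's classification of rational maps carrying such an invariant line field then forces each $f_\lambda$ to be a Latt\`es map, yielding $(4)$.

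The core of the theorem lies in the interaction between $(3)$ and the dynamical conditions. For $(4) \Rightarrow (3)$, I would exploit the semi-conjugacy $p \circ L = f \circ p$ with a family of elliptic curves to express the Green function of $(f,a)$ in terms of a N\'eron--Tate/Betti-type smooth object on the elliptic family, so that $\mu_{f,a} = dd^c g_{f,a}$ carries a density with respect to Lebesgue measure on $\Lambda$. For the harder direction $(3) \Rightarrow (4)$, the plan is: since $(f,a)$ is unstable, $\mu_{f,a}$ is non-zero, and absolute continuity forces $\mathrm{supp}\,\mu_{f,a} = \mathrm{Bif}(f,a)$ to have positive Lebesgue measure. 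The density statement then provides transversely prerepelling parameters $\lambda_0$ in this support. The similarity property transfers absolute continuity of $\mu_{f,a}$ near $\lambda_0$ into absolute continuity of $\mu_{f_{\lambda_0}}$ near the associated repelling periodic point, and Zdunik's theorem \cite{Zdunik} then forces $f_{\lambda_0}$ to be Latt\`es. Since such parameters accumulate in $\Lambda$ and the Latt\`es locus in the moduli space of degree $d$ rational maps is Zariski-closed, non-isotriviality of $f$ implies that the whole family is Latt\`es, giving $(4)$.

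The hard part will be the similarity property used in $(3) \Rightarrow (4)$. What must be shown is that, after a suitable renormalization by iterates of $f_\lambda$ at the repelling periodic point and a change of coordinates made available by the transverse intersection of the graph of $\lambda \mapsto f_\lambda^n(a(\lambda))$ with that of $\lambda \mapsto x(\lambda)$, the local behavior of $\mu_{f,a}$ near $\lambda_0$ is quasi-conformally modeled on that of $\mu_{f_{\lambda_0}}$ near the repelling point, strongly enough to preserve absolute continuity. This will require delicate estimates on the local potential $g_{f,a}$ together with a linearization argument at the repelling cycle, and is where the bulk of the analytic work of the paper is concentrated; once it is established, the equivalences fall out by combining it with Theorem~\ref{prop:totalimpliesJstable}, McMullen, and Zdunik.
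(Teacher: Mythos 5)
Your overall architecture of the proof matches the paper's: $(1)\Leftrightarrow(2)$ from Theorem~\ref{tm:density}, $(4)\Rightarrow(1)$ from stability of Latt\`es families, $(3)\Rightarrow(4)$ via density of transversely prerepelling parameters, the similarity/renormalization statement (Proposition~\ref{prop:rigid}), Zdunik, and then algebraicity to pass from a dense subset of Latt\`es parameters to the whole curve. But your $(1)\Rightarrow(4)$ step contains a genuine error. You extract from Theorem~\ref{prop:totalimpliesJstable} that each $f_\lambda$ has $J_{f_\lambda}=\p^1$ and carries an invariant line field, and you then invoke ``McMullen's classification of rational maps carrying such an invariant line field'' to conclude Latt\`es. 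No such classification theorem exists: the assertion that the only rational maps carrying an invariant line field on their Julia set are Latt\`es maps is precisely the \emph{No Invariant Line Field Conjecture}, which is wide open. The paper explicitly flags this pitfall, noting that one cannot bypass the algebraicity hypothesis without invoking that conjecture.

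What the paper actually uses is a different theorem of McMullen, namely \cite[Theorem~2.4]{McM-stable}: a $J$-stable \emph{algebraic} family of rational maps parametrized by a quasi-projective base is either isotrivial or a family of Latt\`es maps. So the correct route is: Theorem~\ref{prop:totalimpliesJstable} gives that $f$ is $J$-stable; the hypothesis that $\Lambda$ is an irreducible quasi-projective curve and that $f$ is non-isotrivial then lets you invoke McMullen's algebraic-families theorem directly, and the invariant-line-field conclusion of Theorem~\ref{prop:totalimpliesJstable} plays no role. This is also where the algebraicity of $\Lambda$ enters in an essential way -- your version obscures that. On $(4)\Rightarrow(3)$, your proposed computation with the N\'eron--Tate/Betti potential on the elliptic family is a reasonable alternative, but the paper argues more economically by using Zdunik's $\mu_{f_\lambda}=u_\lambda\,\omega_{\p^1}$ fiberwise, writing $\widehat T=u\,\hat\omega$, and restricting to the graph of $a$; no semi-conjugacy with an elliptic family is needed.
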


Note that the hypothesis that $f$ is not isotrivial is necessary to have the equivalence between $1.$ and $4.$ (see Proposition~\ref{prop:isotrivial}).

~

The first step of the proof consists in proving that transversely prerepelling parameters are dense in the support of $\mu_{f,a}$. Using properties of Polynomial-Like Maps in higher dimension and a transversality Theorem of Dujardin for laminar currents \cite{Dujardin2012}, we prove this property holds for the appropriate bifurcation current for any tuple $(f,a_1,\ldots,a_m)$, where $f:\Lambda\times\p^k\to\p^k$ is any holomorphic family of endomorphisms of $\p^k$ and $a_1,\ldots,a_m:\Lambda\to\p^k$ are any marked points (see Theorem~\ref{tm:density}).

\medskip

As a second step, we adapt the similarity argument of Tan Lei~\cite{similarity} to show that, if $\lambda_0$ is a transversely prerepelling parameter where the bifurcation measure is absolutely continuous, the maximal entropy measure $\mu_{f_{\lambda_0}}$ of $f_{\lambda_0}$ is also non-singular with respect to the Fubini-Study form on $\p^1$. As Zdunik \cite{Zdunik} has shown, this implies $f_{\lambda_0}$ is a Latt\`es map.

This gives, in particular, the following.
\begin{Theorem}\label{tm:rigidlattes-dim1}
Fix an  integer $d\geq2$ and let $(f,a)$ be a holomorphic dynamical pair of degree $d$ of $\p^1$ parametrized by a K\"ahler manifold $(M,\omega)$ of dimension $1$. Assume the support of $\mu_{f,a}$ is $\supp(\mu_{f,a})=M$. Then, the following are equivalent:
\begin{enumerate}
\item the measure $\mu_{f,a}$ is absolutely continuous with respect to $\omega$ and the Radon-Nikodym derivative $\frac{d\mu_{f,a}}{d\omega}$ is continuous outside an analytic subvariety of $M$,
\item the family $f$ is a family of Latt\`es maps.
\end{enumerate}
\end{Theorem}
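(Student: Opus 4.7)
The plan is to prove the nontrivial direction $(1)\Rightarrow(2)$ by combining three ingredients: the density statement of Theorem~\ref{tm:density}, a similarity argument in the style of Tan Lei, and Zdunik's rigidity theorem. The reverse implication $(2)\Rightarrow(1)$ is classical and follows from the explicit description of the bifurcation measure of a family of Latt\`es maps (see e.g.~\cite{DeMarco-Mavraki}): one computes that the bifurcation measure coincides (up to a positive smooth factor) with the pull-back of the Weil--Petersson / Kobayashi form on the $j$-line, hence is absolutely continuous.

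For $(1)\Rightarrow(2)$, I would first invoke Theorem~\ref{tm:density}: since $\supp(\mu_{f,a})=M$ by assumption, it produces a dense set $D\subset M$ of transversely prerepelling parameters. Fix $\lambda_{0}\in D$ and let $x$ be the repelling periodic point of $f_{\lambda_{0}}$ onto which $a(\lambda_{0})$ eventually lands, with continuation $x(\lambda)$. Near $\lambda_{0}$, the Tan Lei similarity argument---adapted to this parameter setting as outlined between Theorems~\ref{tm:rigidLattes} and~\ref{tm:rigidlattes-dim1}---identifies the local structure of $\mu_{f,a}$ around $\lambda_{0}$ with the pull-back, under an explicit linearising map, of the maximal entropy measure $\mu_{f_{\lambda_{0}}}$ near $x$. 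The transversality of $\lambda\mapsto f_{\lambda}^{n}(a(\lambda))$ to $\lambda\mapsto x(\lambda)$ is precisely what guarantees that this linearisation is a local biholomorphism, so absolute continuity of $\mu_{f,a}$ with respect to $\omega$ at $\lambda_{0}$ transfers to absolute continuity of $\mu_{f_{\lambda_{0}}}$ with respect to Fubini--Study in a neighbourhood of $x$. Using invariance and ergodicity of $\mu_{f_{\lambda_{0}}}$ under $f_{\lambda_{0}}$, this absolute continuity spreads to all of $J_{f_{\lambda_{0}}}$, and Zdunik's theorem then forces $f_{\lambda_{0}}$ to be a Latt\`es map.

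At this point every $\lambda\in D$ gives a Latt\`es map $f_{\lambda}$. To promote this pointwise information to the statement that $f$ is a family of Latt\`es maps, I would use the fact that the Latt\`es locus in the moduli space $\rat_{d}$ is closed: the holomorphic moduli map $\lambda\mapsto[f_{\lambda}]$ then sends all of $M$ into this locus. A local analysis along $M$, using the branched covers $p_{\lambda}:E_{\lambda}\to\p^{1}$ and the continuation of the underlying lattice data (after possibly replacing $M$ by a branched cover to kill the monodromy of the torsion structure), then produces the holomorphic family $\lambda\mapsto(E_{\lambda},L_{\lambda},p_{\lambda})$ required by the definition of a family of Latt\`es maps.

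The main obstacle will be this last step of upgrading \emph{each $f_{\lambda}$ is individually a Latt\`es map} to \emph{$f$ is a holomorphic family of Latt\`es maps}: in the absence of algebraicity and of an affine parameter space, one must track the Weierstra\ss\ data holomorphically along $M$, control the monodromy on the underlying elliptic curves, and verify that the covering maps $p_{\lambda}$ can be chosen to depend holomorphically on $\lambda$. Everything else in the argument is, by contrast, an application of the already established density and similarity statements together with Zdunik's classical rigidity result.
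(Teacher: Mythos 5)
Your proof of the implication $(1)\Rightarrow(2)$ follows the paper's route: density of transversely prerepelling parameters from Theorem~\ref{tm:density} (whose extra hypotheses are vacuous when $k=1$: no resonance is possible and repelling points are linearizable by Koenigs), the renormalization/similarity transfer of Proposition~\ref{prop:transfer}--\ref{prop:rigid} at such a parameter, Zdunik's theorem at every $\lambda$ in the dense set, and then a spreading argument. Your spreading step (closedness of the Latt\`es locus together with continuity of $\lambda\mapsto f_\lambda$) is essentially equivalent to the paper's, which instead uses the characterization $L(f_\lambda)=\tfrac12\log d$ of Latt\`es maps and the continuity of $\lambda\mapsto L(f_\lambda)$; either works. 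Also, the ``main obstacle'' you single out is not one for this statement: in the paper ``$f$ is a family of Latt\`es maps'' simply means that $f_\lambda$ is a Latt\`es map for every $\lambda\in M$ (this is exactly how the proof of Theorem~\ref{tm:rigidlattes} concludes), so no holomorphic family $(E_\lambda,L_\lambda,p_\lambda)$ over $M$ has to be produced.

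The genuine problem is your justification of $(2)\Rightarrow(1)$. The bifurcation measure of a Latt\`es pair is not, up to a smooth positive factor, the pull-back of a form on the $j$-line: it depends on the marked point $a$ and not only on the modulus of the family. For instance, take $f_\lambda=f_0$ a fixed Latt\`es map and $a$ nonconstant; then the $j$-invariant is constant, so any pull-back from the $j$-line vanishes identically, while $\mu_{f,a}=a^*\mu_{f_0}\neq0$. The correct (and short) argument is the one the paper gives: by Zdunik, $\mu_{f_\lambda}=u_\lambda\cdot\omega_{\p^1}$ for each $\lambda$, i.e.\ $\widehat T=u\cdot\hat\omega$ on $M\times\p^1$; slicing along the graph of $a$ yields, in local charts where $\omega_{\p^1}=dd^cv$ with $v$ smooth,
\[\mu_{f,a}=u(\lambda,a(\lambda))\,dd^c\bigl(v\circ a\bigr),\]
which is absolutely continuous with respect to $\omega$. (What DeMarco--Mavraki actually supply in this setting is a description of the bifurcation measure of a Latt\`es pair via the Betti form attached to the section of the elliptic surface, which is smooth; invoking that would also close this direction, but it is not a pull-back from the $j$-line.)
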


We can see Theorem \ref{tm:rigidLattes} as a partial parametric counterpart of Zdunik's result. However, the comparison with Zdunik's work ends there: Rational maps with $\p^1$ as a Julia sets are, in general, not Latt\`es maps. Indeed, Latt\`es maps form a strict subvariety of the space of all degree $d$ rational maps, and maps with $J_f=\p^1$ form a set of positive volume by \cite{Rees}. In a way, Theorem \ref{tm:rigidLattes} is a stronger rigidity statement that the dynamical one. 

Note also that we only use the fact that $\Lambda$ is a quasi-projective curve to prove the equivalence between $\mathrm{Bif}(f,a)=\Lambda$ and the smoothness of the bifurcation measure, relying on \cite{McM-stable}. We don't know how to get rid of this algebraicity assumption, without using the No Invariant Line Field Conjecture of McMullen, which is far from being proved.

~

Recall that, as in dimension $1$, an endomorphism $f$ of $\p^{k}$ is a \emph{Latt\`es map} if there exists an abelian variety $A$, a finite branched cover $p:A\to\p^{k}$ and an isogeny $I:A\to A$ such that $p\circ I=f\circ p$ on $A$.
Berteloot and Loeb \cite{BL} and then Berteloot and Dupont \cite{BertelootDupont} generalized Zdunik's work to endomorphisms of $\p^{k}$: $f$ is a Latt\`es map of $\p^{k}$ if and only if the measure $\mu_f$ is is not singular with respect to $\omega_{\p^{k}}^k$, see also \cite{DupontThese}. 
Recall finally that a repelling periodic point of $f$ is $J$-\emph{repelling} if it belongs to $\mathrm{supp}(\mu_f)$.

As an important part of our arguments applies in any dimension, we have the following higher dimensional counterpart to Theorem~\ref{tm:rigidlattes-dim1}.

\begin{Theorem}\label{tm:rigidlattes}
Fix integers $d\geq2$ and $k\geq1$ and let $(f,a)$ be any holomorphic dynamical pair of degree $d$ of $\p^k$ parametrized by a K\"ahler manifold $(M,\omega)$ of dimension $k$. Assume that for all $\lambda\in M$, any $J$-repelling periodic point of $f_\lambda$ is linearizable.
Assume in addition that $\mu_{f,a}:=T_{f,a}^k$ satisfies $\supp(\mu_{f,a})=M$. Then the following are equivalent:
\begin{enumerate}
\item the measure $\mu_{f,a}$ is absolutely continuous with respect to $\omega^k$ and $\frac{d\mu_{f,a}}{d\omega^k}$ is continuous outside an analytic subvariety of $M$,
\item the family $f$ is a family of Latt\`es maps of $\p^k$.
\end{enumerate}
\end{Theorem}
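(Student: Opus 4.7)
\smallskip

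The plan is to follow the same three-step strategy as Theorem~\ref{tm:rigidlattes-dim1}: density of transversely prerepelling parameters, a similarity argument at such a parameter, then Berteloot--Dupont rigidity. The assumptions of linearizability of $J$-repelling cycles and non-resonance of Lyapunov exponents are precisely what is needed to compensate for phenomena that are automatic in dimension one.

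\smallskip

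For the implication $(2)\Rightarrow(1)$, I would rely on the Berteloot--Dupont absolute continuity of $\mu_{f_\lambda}$ when $f_\lambda$ is Latt\`es: lifting the family locally to the abelian cover $A\to\p^k$, the dynamics becomes an isogeny and the marked point an affine function of $\lambda$, so $\mu_{f,a}=T_{f,a}^k$ is (a multiple of) the push-forward of Haar measure on $A$ and is thus absolutely continuous with respect to $\omega^k$.

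\smallskip

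The main content is $(1)\Rightarrow(2)$. Step~1 is immediate from Theorem~\ref{tm:density}, which gives that transversely prerepelling parameters are dense in $\supp(\mu_{f,a})=M$. For Step~2, I fix such a parameter $\lambda_0\in M$ with $f_{\lambda_0}^n(a(\lambda_0))=x$ a $J$-repelling periodic point of period $p$ and with the graphs of $\lambda\mapsto f_\lambda^n(a(\lambda))$ and $\lambda\mapsto x(\lambda)$ transverse at $\lambda_0$. The linearizability hypothesis produces a biholomorphism $\phi:(U,x)\to(\C^k,0)$ with $\phi\circ f_{\lambda_0}^p=L\circ\phi$, where $L$ is linear with spectrum $\mu_1,\dots,\mu_k$ of modulus $>1$, and the transversality implies that
\[
\sigma(\lambda):=\phi\bigl(f_\lambda^n(a(\lambda))\bigr)-\phi\bigl(x(\lambda)\bigr)
\]
is a local biholomorphism $(M,\lambda_0)\to(\C^k,0)$. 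I would then adapt Tan Lei's similarity argument: for large $j$, the orbit $f_\lambda^{n+pj}(a(\lambda))$, read in the chart $\phi$ and compared with $x(\lambda)$, is $L^j\sigma(\lambda)$ up to a uniformly small error; combined with $\mu_{f_{\lambda_0}}$ being the normalized-pullback limit of $\omega_{\p^k}^k$ under $f_{\lambda_0}^{pj}$, and with $\mu_{f,a}$ being a top-wedge of bifurcation currents built from exactly these orbits, this allows me to identify, after rescaling by $L^{-j}$ via $\sigma$, the germ of $\mu_{f,a}$ at $\lambda_0$ with the germ of $\mu_{f_{\lambda_0}}$ at $x$, so that absolute continuity transfers from the former to the latter. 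Step~3 is then Berteloot--Dupont: absolute continuity of $\mu_{f_{\lambda_0}}$ with respect to $\omega_{\p^k}^k$ implies $f_{\lambda_0}$ is a Latt\`es map of $\p^k$ (the non-resonance of Lyapunov exponents being invoked at this step). The set of Latt\`es parameters is thus dense in $M$; since the Latt\`es locus is a closed algebraic subset of the moduli space of degree $d$ endomorphisms of $\p^k$ and $\lambda\mapsto[f_\lambda]$ is holomorphic, the entire family $f$ lies in the Latt\`es locus.

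\smallskip

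The main obstacle will be Step~2. In dimension one, Tan Lei's similarity is governed by a single multiplier $\mu$ with uniform renormalization rate $|\mu|^{-j}$, and hyperbolic cycles are automatically linearizable. In dimension $k$, the renormalization $L^{-j}$ has $k$ distinct contraction rates along different directions, and balancing them to compare top-degree measures is delicate. The linearizability hypothesis is what provides the chart $\phi$, and the non-resonance assumption on Lyapunov exponents rules out hidden multiplicative relations among the $\mu_i$ that could obstruct transferring absolute continuity from $\mu_{f,a}$ to $\mu_{f_{\lambda_0}}$.
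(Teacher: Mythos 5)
Your overall strategy matches the paper's: density of transversely prerepelling parameters (Theorem~\ref{tm:density}), a renormalization/similarity argument transferring absolute continuity from $\mu_{f,a}$ to $\mu_{f_{\lambda_0}}$ (Propositions~\ref{prop:transfer} and~\ref{prop:rigid}), then Berteloot--Dupont rigidity. But there are three genuine issues.

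First, you misplace the non-resonance hypothesis: it is \emph{not} invoked at the Berteloot--Dupont step, which applies to any endomorphism of $\p^k$ whose maximal entropy measure is non-singular. Non-resonance is used exclusively in Proposition~\ref{goodhyperbolicset}, where it serves to produce a hyperbolic set supporting a PLB measure; this feeds into the density step (Theorem~\ref{tm:densitypart2}). The remark following Theorem~\ref{tm:density} makes this explicit.

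Second, your passage from a dense set of Latt\`es parameters to the whole family rests on the assertion that the Latt\`es locus is a \emph{closed} algebraic subset of the space of degree-$d$ endomorphisms of $\p^k$. For $k>1$ this is not a free fact and would itself require proof (and since $M$ is merely K\"ahler you can only exploit topological closedness, not algebraicity). The paper takes a cleaner and self-justifying route: Berteloot--Dupont in fact characterize Latt\`es maps by the equality $L(f)=\tfrac{k}{2}\log d$, where $L(f)=\int\log|\det Df|\,\mu_f\geq\tfrac{k}{2}\log d$ always. Since $\lambda\mapsto L(f_\lambda)$ is continuous and attains its minimal value on a dense set, it is identically $\tfrac{k}{2}\log d$, so every $f_\lambda$ is Latt\`es. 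This argument simultaneously delivers the closedness you wanted to invoke.

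Third, your $(2)\Rightarrow(1)$ sketch conflates a measure on $\p^k$ with a measure on $M$: the push-forward of Haar measure along $A\to\p^k$ is the equilibrium measure $\mu_{f_\lambda}$ on $\p^k$, not the bifurcation measure $\mu_{f,a}$ on the parameter space. What the paper does is use Berteloot--Dupont to write $\mu_{f_\lambda}=u_\lambda\cdot\omega_{\p^k}^k$, so that the fibered Green current satisfies $\widehat{T}^k=u\cdot\hat{\omega}^k$, and then restricts to the graph $\Gamma_a$: in local charts where $\omega_{\p^k}=dd^cv$ with $v$ smooth, one obtains $\mu_{f,a}=h\cdot\bigl(dd^c(v\circ a)\bigr)^k$ with $v\circ a$ smooth, hence absolutely continuous with respect to $\omega^k$. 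Your lifting picture could perhaps be completed, but as written it does not produce a measure on $M$ and misses the step that actually converts the dynamical smoothness of $\mu_{f_\lambda}$ into parametric smoothness of $\mu_{f,a}$.
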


The paper is organized as follows. In section~\ref{sec:prelim}, we recall the construction of the bifurcation currents of marked points and properties of Polynomial-Like Maps. Section~\ref{sec:support} is dedicated to proving the density of transversely prerepelling parameters. In section~\ref{sec:similar}, we establish the similarity property for the bifurcation and maximal entropy measures. Finally, in section~\ref{sec:proof} we prove Theorems \ref{prop:totalimpliesJstable}, \ref{tm:rigidLattes}, \ref{tm:rigidlattes-dim1} and \ref{tm:rigidlattes} and list related questions.

\paragraph*{Acknowledgements}
I would like to thank Charles Favre and Gabriel Vigny whose interesting discussions, remarks and questions led to an important part of this work. This research is partially supported by the ANR grant Fatou ANR-17-CE40-0002-01.
I also would like to thank the anonymous referee for helpful comments and remarks.

\section{Dynamical preliminaries}\label{sec:prelim}

 \subsection{The bifurcation current of a dynamical tuple}\label{background}
For this section, we follow the presentation of \cite{favredujardin,Dujardin2012}. Even though everything is presented in the case $k=1$ and for marked \emph{critical} points, the exact same arguments give what we present below.

\smallskip

Let $\Lambda$ be a complex manifold and let $f:\Lambda \times \p^k\to\p^k$ be a holomorphic family of endomorphisms of $\p^k$ of algebraic degree $d \geq 2$: $f$ is holomorphic and $f_\lambda:=f(\lambda,\cdot):\p^{k}\to\p^{k}$ is an endomorphism of algebraic degree $d$.

\begin{definition}
Fix integers $m\geq1$, $d\geq2$ and let $\Lambda$ be a complex manifold. A \emph{dynamical $(m+1)$-tuple} $(f,a_1,\ldots,a_m)$ of $\p^{k}$ of degree $d$ parametrized by $\Lambda$ is a holomorphic family $f$ of endomorphisms of $\p^{k}$ of degree $d$ parametrized by $\Lambda$, endowed with $m$ holomorphic maps \emph{(marked points)} $a_1,\ldots,a_m:\Lambda\to\p^{k}$.
\end{definition}

Let $\omega_{\p^k}$ be the standard Fubini-Study form on $\p^k$ and $\pi_\Lambda:\Lambda\times\p^k\to\Lambda$ and $\pi_{\p^k}:\Lambda\times\p^k\to\p^k$ be the canonical projections. Finally, let $\widehat{\omega}:=(\pi_{\p^k})^*\omega_{\p^k}$. A family $f:\Lambda \times \p^k\to\p^k$ naturally induces a fibered dynamical system $\hat{f}:\Lambda \times \p^k\to\Lambda\times\p^k$, given by $\hat{f}(\lambda,z):=(\lambda,f_\lambda(z))$. It is known that
the sequence $d^{-n}(\hat{f}^n)^*\widehat{\omega}$ converges to a closed positive $(1,1)$-current $\widehat{T}$ on $\Lambda\times\p^k$ with continuous potential. Moreover, for any $1\leq j\leq k$,
\[\hat{f}^*\widehat{T}^j=d^j\cdot \widehat{T}^{j}\]
and $\widehat{T}^k|_{\{\lambda_0\}\times\p^1}=\mu_{\lambda_0}$ is the unique measure of maximal entropy $k\log d$ of $f_{\lambda_0}$ for all $\lambda_0\in\Lambda$.

For any $n\geq1$, we have $\widehat{T}=d^{-n}(\hat{f}^{n})^*\hat{\omega}+d^{-n}dd^c\widehat{u}_n$, where $(\widehat{u}_n)_n$ is a locally uniformly bounded sequence of continuous functions.

\medskip

Pick now a dynamical $(m+1)$-tuple $(f,a_1,\ldots,a_m)$ of degree $d$ of $\p^k$. Let $\Gamma_{a_j}\subset\Lambda\times\p^k$ be the graph of the map $a_j$ and set
 \[\mathfrak{a}:=(a_1,\ldots,a_m).\]
 \begin{definition}
For $1\leq i\leq m$, the \emph{bifurcation current} $T_{f,a_i}$ of the pair $(f,a_i)$ is the closed positive $(1,1)$-current on $\Lambda$ defined by
 \[T_{f,a_i}:=(\pi_{\Lambda})_*\left(\widehat{T}\wedge[\Gamma_{a_j}]\right)\]\\
 and we define the \emph{bifurcation current} $T_{f,\mathfrak{a}}$ of the $(m+1)$-tuple $(f,a_1,\ldots,a_m)$ as
 \[T_{f,\mathfrak{a}}:=T_{f,a_1}+\cdots+T_{f,a_k}.\]
 \end{definition}

For any $\ell\geq0$, write
\[\mathfrak{a}_\ell(\lambda):=\left(f_\lambda^{\ell}(a_1(\lambda)),\ldots,f_\lambda^{\ell}(a_m(\lambda))\right), \ \lambda\in\Lambda.\]
Let now $K\subset\Lambda$ be a compact subset of $\Lambda$ and let $\Omega$ be some relatively compact neighborhood of $K$, then $(a_{\ell})^*(\omega_{\p^{k}})$ is bounded in mass in $\Omega$ by $C d^{\ell}$, where $C$ depends on $\Omega$ but not on $\ell$.

Note that the proof of \cite[Proposition-Definition~3.1 and Theorem~3.2]{favredujardin} (which is for marked critical points and when $k=1$) works similarly when $k>1$ and $a$ is non-critical. Applying verbatim their proof, we have the following, see also the proof of \cite[Theorem~9.1]{DeMarco2} which adapts also perfectly here.

\begin{lemma}\label{lm:DF}
For any $1\leq i\leq k$, the support of $T_{f,a_i}$ is the set of parameters $\lambda_0\in\Lambda$ such that the sequence $\{\lambda\mapsto f_\lambda^n(a_i(\lambda))\}$ is not a normal family at $\lambda_0$.

Moreover, writing $a_{i,\ell}(\lambda):=f_{\lambda}^\ell(a_i(\lambda))$, there exists a locally uniformly bounded family $(u_{i,\ell})$ of continuous functions on $\Lambda$ such that 
 \[(a_{i,\ell})^*(\omega_{\p^{k}})=d^{\ell} T_{f,a_i}+dd^cu_{i,\ell} \ \text{on} \ \Lambda.\] 
\end{lemma}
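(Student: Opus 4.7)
The plan is to deduce both assertions from the potential-theoretic identity $\widehat{T}=d^{-\ell}(\hat{f}^\ell)^*\widehat{\omega}+d^{-\ell}\,dd^c\widehat{u}_\ell$ recalled just before the lemma, by wedging with $[\Gamma_{a_i}]$ and pushing forward by $\pi_\Lambda$.

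First, since $\widehat{T}$ has continuous local potentials and each $\widehat{u}_\ell$ is continuous, the wedge products with the integration current $[\Gamma_{a_i}]$ make sense in the Bedford--Taylor sense. The projection $\pi_\Lambda$ restricts to a biholomorphism $\Gamma_{a_i}\to\Lambda$ with inverse $(\mathrm{id},a_i)$, and $\pi_{\p^{k}}\circ\hat{f}^\ell\circ(\mathrm{id},a_i)=a_{i,\ell}$. Pushing forward the smooth piece therefore yields $(\pi_\Lambda)_*\bigl((\hat{f}^\ell)^*\widehat{\omega}\wedge[\Gamma_{a_i}]\bigr)=(a_{i,\ell})^*\omega_{\p^{k}}$. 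Since push-forward commutes with $dd^c$, we also get $(\pi_\Lambda)_*\bigl(dd^c\widehat{u}_\ell\wedge[\Gamma_{a_i}]\bigr)=dd^c u_{i,\ell}$, where $u_{i,\ell}(\lambda):=-\widehat{u}_\ell(\lambda,a_i(\lambda))$ is continuous on $\Lambda$; the family $(u_{i,\ell})_\ell$ is locally uniformly bounded because $(\widehat{u}_\ell)_\ell$ is. Rearranging gives the displayed identity $(a_{i,\ell})^*\omega_{\p^{k}}=d^\ell T_{f,a_i}+dd^c u_{i,\ell}$.

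For the support characterization I work locally. Near $\lambda_0\in\Lambda$ choose a small polydisc $U$, a holomorphic non-vanishing lift $\tilde{a}_i:U\to\C^{k+1}\setminus\{0\}$ of $a_i$, and a holomorphic lift $F_\lambda:\C^{k+1}\to\C^{k+1}$ of $f_\lambda$ depending holomorphically on $\lambda\in U$. By the Brolin--Hubbard--Papadopol--Sibony construction, $\varphi_\ell(\lambda):=d^{-\ell}\log\|F_\lambda^\ell(\tilde{a}_i(\lambda))\|$ converges locally uniformly on $U$ to a continuous plurisubharmonic function $\varphi_\infty$ which is a local potential of $T_{f,a_i}$, and up to a normalizing constant the identity above reads $(a_{i,\ell})^*\omega_{\p^{k}}=dd^c(d^\ell\varphi_\ell)$. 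If $\{a_{i,\ell}\}$ is normal at $\lambda_0$, then after shrinking $U$ one can renormalize the lifts so that $\varphi_\ell$ converges to a pluriharmonic limit, hence $T_{f,a_i}=0$ on a neighborhood of $\lambda_0$. Conversely, if $T_{f,a_i}=0$ on $U$, then $\varphi_\infty$ is pluriharmonic; combined with the local uniform boundedness of $\varphi_\ell-\varphi_\infty$ this forces the local lifts of the $a_{i,\ell}$ to form a normal family into $\C^{k+1}\setminus\{0\}$, whence $\{a_{i,\ell}\}$ is normal at $\lambda_0$.

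The only non-formal step is this last implication: extracting honest equicontinuity of the holomorphic family $\{a_{i,\ell}\}$ into $\p^{k}$ from the vanishing of the current $T_{f,a_i}$. This is the core of the Dujardin--Favre argument in \cite{favredujardin}, and it transposes verbatim to our setting because the proof there only uses (i) the decomposition $(a_{i,\ell})^*\omega_{\p^{k}}=d^\ell T_{f,a_i}+dd^c u_{i,\ell}$ with $(u_{i,\ell})$ locally uniformly bounded, (ii) the existence of local holomorphic lifts of $f$ and of the marked point, and (iii) locally uniform convergence of $d^{-\ell}\log\|F_\lambda^\ell\|$ to the fiberwise Green function, none of which use that $a_i$ is a marked critical point or that $k=1$.
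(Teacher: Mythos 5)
Your proposal is correct and follows essentially the same route as the paper, which simply invokes the proofs of Proposition-D\'efinition~3.1 and Theorem~3.2 of Dujardin--Favre verbatim: your wedge-and-pushforward derivation of $(a_{i,\ell})^*\omega_{\p^{k}}=d^{\ell}T_{f,a_i}+dd^cu_{i,\ell}$ with $u_{i,\ell}(\lambda)=-\widehat{u}_\ell(\lambda,a_i(\lambda))$ is exactly what makes that transposition work, and the normality criterion via the fiberwise Green function $\varphi_\infty(\lambda)=G_\lambda(\tilde a_i(\lambda))$ (pluriharmonic renormalization of the lift in one direction, bounded pullback mass in the other) is the same Dujardin--Favre argument the paper appeals to, which indeed uses neither $k=1$ nor that $a_i$ is critical.
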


As a consequence, for all $j\geq1$, we have
\[(a_{i,\ell})^*(\omega_{\p^{k}}^j)=d^{j\ell} T_{f,a_i}^j+\sum_{s=1}^j\binom{j}{s} d^{\ell(j-s)}\cdot(dd^cu_{i,\ell})^s\wedge T_{f,a_i}^{j-s},\]
so that the mass of the $(j,j)$-current $(a_{i,\ell})^*(\omega_{\p^{k}}^j)-d^{j\ell} T_{f,a_i}^j$ is $O(d^{(j-1)\ell})$ on compact subsets of $\Lambda$. In particular, one sees that
\begin{align}
T_{f,a_i}^{k+1}=0 \ \text{on} \ \Lambda. \label{nointersection>q}
\end{align}

\medskip 
 
Let us still denote  $\pi_\Lambda:\Lambda \times(\p^k)^m\to \Lambda$ be the projection onto the first coordinate and for $1\leq i\leq k$, let $\pi_i:\Lambda \times(\p^k)^m\to \Lambda\times\p^k$ be the projection onto $\Lambda$ times the $i$-th factor of the product $(\p^k)^m$.  
Finally, we denote by $\Gamma_{\mathfrak{a}}$ the graph of $\mathfrak{a}$:
 \[\Gamma_{\mathfrak{a}}:= \{ (\lambda,z_1,\ldots,z_m), \ \forall j, z_j= a_j(\lambda)\} \subset \Lambda \times (\p^k)^m.\] 
Following verbatim the proof of~\cite[Lemma 2.6]{AGMV}, we get
\[\frac{1}{(mk)!}T_{f,\mathfrak{a}}^{mk}=\bigwedge_{\ell=1}^mT_{f,a_\ell}^{k}= (\pi_\Lambda)_* \left( \bigwedge_{i=1}^m \pi_i^*\left(\hat{T}^{k}\right) \wedge \left[\Gamma_{\mathfrak{a}}\right]\right).\]

\subsection{Hyperbolic sets supporting a PLB ergodic measure}\label{sec:PLM}

\begin{definition}
Let $W\subset \C^k$ be a bounded open set. We say that a positive measure $\nu$ compactly supported on $W$ is \emph{PLB} if the psh functions on $W$ are integrable with respect to $\nu$.
\end{definition}

We aim here at proving the following proposition in the spirit of \cite[Lemma~4.1]{Dujardin2012}:

\begin{proposition}\label{goodhyperbolicset}
Let $f:\mathbb{P}^k\rightarrow\mathbb{P}^k$ be an endomorphism of degree $d\geq2$.
There exists a small ball $\mathbb{B}\subset\p^k$, an integer $m\geq1$, a $f^m$-invariant compact set $K\subset\mathbb{B}$ and an integer $N\geq2$ such that
\begin{itemize}
\item $f^m|_K$ is uniformly expanding and repelling periodic points of $f^m$ are dense in $K$,
\item there exists a unique probability measure $\nu$ supported on $K$ such that $(f^m|_K)^*\nu=N\nu$ which is PLB.
\end{itemize}
\end{proposition}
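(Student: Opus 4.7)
The plan is to realize $K$ as the support of the equilibrium measure of a suitable polynomial-like iterate of $f$, and then deduce the PLB property and the uniform expansion from the Dinh--Sibony theory of polynomial-like maps. I would follow the construction of \cite[Lemma~4.1]{Dujardin2012}, with the non-resonance of Lyapunov exponents ensuring the existence of a good local model at a $J$-repelling periodic point.

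First, by the equidistribution of $J$-repelling periodic points (Briend--Duval \cite{briendduval}), pick a $J$-repelling periodic point $z_0\in\supp(\mu_f)$ of some period $p$, and replace $f$ by $f^p$ so that $z_0$ becomes a $J$-repelling fixed point; the eigenvalues of $Df(z_0)$ then all have modulus $>1$. The non-resonance hypothesis on the Lyapunov exponents of $\mu_f$, combined with Oseledec--Pesin regularity along $\mu_f$-typical orbits, allows one to pick $z_0$ so that the multipliers of $Df(z_0)$ themselves are multiplicatively non-resonant; Poincar\'e's linearization theorem then produces a local biholomorphism conjugating $f$ near $z_0$ to a diagonal linear map.

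Second, in these linearizing coordinates, choose a small ball $\mathbb{B}$ centered at $z_0$ and consider a sufficiently high iterate $f^m$. Using the mixing of $\mu_f$ and the equidistribution of preimages under $f^m$, one locates a connected component $U\Subset\mathbb{B}$ of $(f^m)^{-1}(\mathbb{B})$ such that the restriction $g:=f^m|_U:U\to\mathbb{B}$ is a proper holomorphic map of topological degree $N\geq 2$. This is the main geometric step of the proof, and also the principal technical obstacle: in dimension one it is almost immediate from the existence of a second preimage of $z_0$ close to $z_0$, whereas for $k\geq 2$ the folding of $f^m|_{\mathbb{B}}$ must be controlled quantitatively, which is precisely where the non-resonance of Lyapunov exponents enters.

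With the polynomial-like structure $g:U\to\mathbb{B}$ in hand, the Dinh--Sibony theory of polynomial-like maps produces a unique probability measure $\nu$ of maximal entropy $\log N$, satisfying $g^*\nu=N\nu$, and which is PLB in the sense of the definition above (psh functions on $\mathbb{B}$ are $\nu$-integrable). Its support $K:=\supp(\nu)$ is a compact $g$-invariant subset of $\mathbb{B}$. Choosing $\mathbb{B}$ small enough so that $g^{-1}(\mathbb{B})\Subset\mathbb{B}$ and using that the eigenvalues of $Dg(z_0)=(Df(z_0))^m$ have modulus $\gg 1$ for $m$ large, the standard Koebe/contraction argument on the inverse branches of $g$ gives uniform expansion of $g$ on a neighborhood of $K$. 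Density of repelling periodic points in $K$ then follows from the analogue of Briend--Duval equidistribution applied to the polynomial-like map $g$.
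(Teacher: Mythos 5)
There is a genuine gap, and it sits exactly at the point you flag as the ``principal technical obstacle'' --- but it is more serious than a technical obstacle, because the route you choose cannot deliver the PLB property even if your polynomial-like map were constructed. In the Dinh--Sibony theory (Theorem~\ref{tm:DSPLM} above), PLB of the equilibrium measure and the density of repelling points in its support are \emph{not} automatic for a polynomial-like map: they require the quantitative condition $d_{k-1}^*<d_t$. You take for $U$ a single connected component of $(f^m)^{-1}(\mathbb{B})$ of some topological degree $N\geq2$; for such a component $N$ may be as small as $2$, while $d_{k-1}^*$ for $g=f^m$ on a ball is only controlled by $d^{(k-1)m}$, which grows with $m$. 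So the inequality $N>d_{k-1}^*$ --- the heart of the matter --- is neither verified nor plausible in your setup, and the sentence asserting that Dinh--Sibony ``produces a unique probability measure \dots which is PLB'' is unjustified as stated.

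The paper's proof resolves both difficulties at once by \emph{not} taking a connected component: by \cite[Proposition 3.2]{BertelootBianchi} (a Briend--Duval type statement), on a small ball $B$ charged by $\mu_f$ the iterate $f^m$ admits $M(m)\geq Cd^{km}$ pairwise disjoint, uniformly contracting inverse branches $g_i$ with $g_i(B)\Subset B$; setting $U:=\bigcup_i g_i(B)$ gives a polynomial-like map $g=f^m|_U:U\to B$ of topological degree $N=M(m)$, and uniform expansion of $f^m$ on $K=\supp(\nu)\subset U$ is immediate since the branches are contracting. The second half of the proof is then a Chern--Levine--Nirenberg estimate showing $d_{k-1}^*\leq d^{(k-1)m}$, so that choosing $m$ with $Cd^{km}>d^{(k-1)m}\geq 2$ yields $N>d_{k-1}^*$, which is exactly what item (2) of Theorem~\ref{tm:DSPLM} requires. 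Note also that your appeal to non-resonance (Oseledec--Pesin plus Poincar\'e linearization at one repelling point) plays no role in controlling the degree of the folding of $f^m|_{\mathbb{B}}$, which is the quantity that actually matters; the paper's argument does not pass through linearizing coordinates at a periodic point at all.
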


Even though this result is considered folklore, we include a proof relying on properties of  \emph{polynomial-like map}. We refer to \cite{DS-PLM} for more about polynomial-like maps.
Given an complex manifold $M$ and an open set $V\subset M$, we say that $V$ is \emph{$S$-convex} if there exists a continuous strictly plurisubharmonic function  on $V$. In fact, this implies that there exists a smooth strictly psh function $\psi$, whence there exists a K\"ahler form $\omega:=dd^c\psi$ on $V$.

\begin{definition}
Given a connected $S$-convex open set and a relatively compact open set $U\subset V$, a map $f:U\rightarrow V$ is \emph{polynomial-like} if $f$ is holomorphic and proper.
\end{definition}

The \emph{filled-Julia set} of $f$ is the set
\[\mathcal{K}_f:=\bigcap_{n\geq0}f^{-n}(U).\]
The set $\mathcal{K}_f$ is full, compact, non-empty and it is the largest totally invariant compact subset of $V$, i.e. such that $f^{-1}(\mathcal{K}_f)=\mathcal{K}_f$.

\medskip

The topological degree $d_t$ of $f$ is the number of preimages of any $z\in V$ by $f$, counted with multiplicity. Let $k:=\dim V$. We define
\[d_{k-1}^*:=\sup_\varphi\left\{d_t\cdot \limsup_{n\rightarrow\infty}\|\Psi^ndd^c\varphi\|_U^{1/n}\, ; \ \varphi \text{ is psh on }V\right\},\]
where $\Psi:=d_t^{-1}f_*$. According to Theorem 3.2.1 and Theorem~3.9.5 of \cite{DS-PLM}, we have the following.

\begin{theorem}[Dinh-Sibony]\label{tm:DSPLM}
Let $f:U\to V$ be a polynomial-like map of topological degree $d_t\geq2$. There exists a unique probability measure $\mu$ supported by $\partial \mathcal{K}_f$ which is ergodic and such that
\begin{enumerate}
\item for any volume form $\Omega$ of mass $1$ in $L^2(V)$,  one has $d_t^{-n}(f^n)^*\Omega\rightarrow \mu$ as $n\to\infty$,
\item if $d_{k-1}^*<d_t$, the measure $\mu$ is PLB and repelling periodic points are dense in $\supp(\mu)$.
\end{enumerate}
\end{theorem}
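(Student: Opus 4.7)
The plan is to mimic the classical construction of the equilibrium measure for endomorphisms of $\p^k$ and adapt it to the polynomial-like setting on the $S$-convex open set $V$. Fix a smooth strictly psh function $\psi$ with K\"ahler form $\omega := dd^c\psi$ on $V$, and consider the normalized pullback operator $\mathcal{L} := d_t^{-1} f^*$. For a volume form $\Omega$ of mass $1$ in $L^2(V)$, set $\Omega_n := d_t^{-n}(f^n)^*\Omega$. Since $f$ is proper of degree $d_t$, each $\Omega_n$ is a probability measure supported in $f^{-n}(U)$, so any weak limit is supported on $\mathcal{K}_f$. To promote compactness to convergence, I would fix a reference probability volume form $\Omega'$ and write $\Omega - \Omega' = dd^c G$ for a quasi-bounded potential $G$ (available since $V$ is $S$-convex and both $\Omega,\Omega'$ lie in $L^2$), then telescope $\Omega_n - \Omega'_n = dd^c(\mathcal{L}^n G)$ and show that $\mathcal{L}^n G$ tends to a constant in $L^1_{\mathrm{loc}}$. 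Uniqueness of the $f^*$-eigenmeasure with eigenvalue $d_t$ should then follow by approximating any candidate $\nu$ by $L^2$ volume forms and running the same argument.

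Ergodicity I would deduce from mixing on psh test functions. For $\varphi$ psh on $V$, the deviation $\mathcal{L}^n\varphi - \langle\mu,\varphi\rangle$ should be controlled by a quantity involving $\|\Psi^n dd^c\varphi\|_U$. This is precisely where the hypothesis $d_{k-1}^*<d_t$ enters: by definition,
\[
d_t\limsup_{n\to\infty}\|\Psi^n dd^c\varphi\|_U^{1/n}\leq d_{k-1}^*,
\]
so the deviation decays geometrically. This simultaneously yields exponential mixing, hence ergodicity, and the finiteness of $\langle\mu,\varphi\rangle$ for every psh $\varphi$ on $V$, i.e.\ the PLB property.

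The harder step, and the one I expect to be the main obstacle, is the density of repelling periodic points in $\supp(\mu)$. The strategy would be to pick a generic $x_0\in\supp(\mu)$ and a small ball $B\ni x_0$, and to use equidistribution of preimages together with a pigeonhole argument to produce inverse branches $g_n:B\to B_n$ of $f^n$ with $B_n$ of small diameter satisfying $B_n\cap B\neq\emptyset$. The dynamical-degree gap $d_{k-1}^*<d_t$ should provide the quantitative contraction needed to apply a Schwarz/Brouwer-type argument yielding a fixed point of $g_n$, hence a repelling periodic point of $f^n$ in $B$. Making this effective outside a projective ambient space is the delicate point: one needs a normal-family compactness for inverse branches, uniform contraction estimates for their differentials, and control on the $\mu$-measure of the set of points admitting sufficiently well-behaved inverse branches. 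A careful use of the mixing estimate together with a Lebesgue-density argument for $\mu$ should produce such points in $\mu$-generic position and complete the proof.
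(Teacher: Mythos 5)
The paper offers no proof of this statement at all: it is quoted verbatim from Dinh--Sibony ([DS-PLM], Theorems 3.2.1 and 3.9.5), so what you are attempting is a reconstruction of that theory, and as written it has genuine gaps. The central technical one is in your convergence step: for $k>1$ the difference of two volume forms is not $dd^cG$ for a \emph{function} $G$; the potential is a $(k-1,k-1)$-current, and the mass of $d_t^{-n}(f^n)^*$ applied to such currents is controlled precisely by the dynamical degree $d_{k-1}^*$. Your telescoping argument would therefore need $d_{k-1}^*<d_t$ already for part (1), whereas the statement asserts convergence, uniqueness and ergodicity with no such hypothesis. Dinh--Sibony avoid this by dualizing: they study $\Lambda^n\varphi:=d_t^{-n}(f^n)_*\varphi$ acting on psh and dsh test functions and show $\Lambda^n\varphi\to c_\varphi$ using compactness and normalization properties of dsh functions, not a potential for $\Omega-\Omega'$. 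The same mis-allocation appears in your treatment of ergodicity: you deduce it from exponential mixing, which in your scheme uses the degree gap, but ergodicity belongs to part (1) and must be obtained without it. The uniqueness step also needs more than ``approximate $\nu$ by $L^2$ forms'': to pass to the limit you need a regularity (PB/PLB-type) assumption on the candidate measure, which is exactly the kind of bookkeeping the dsh formalism handles.

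The second genuine gap is the one you flag yourself: the density of repelling periodic points in $\supp(\mu)$ is the actual content of [DS-PLM, Theorem 3.9.5], not a corollary of equidistribution plus pigeonhole. Under $d_{k-1}^*<d_t$ one uses the PLB property to show that $\mu$-almost every point admits, for each $n$, a definite proportion of the $d_t^n$ inverse branches of $f^n$ defined on a fixed ball with images of exponentially small diameter; a counting argument then produces, near any point of $\supp(\mu)$, branches mapping the ball strictly into itself, whose attracting fixed points are repelling periodic points of $f^n$, and one must still check these points lie in $\supp(\mu)$ (the branch images shrink to points charged by $\mu$). The quantitative inputs --- normal-family/contraction estimates for inverse branches off an exceptional set, the size estimates for their images, the positive proportion of good branches, all uniform on a set of positive $\mu$-measure --- are not produced by a Schwarz/Brouwer argument alone. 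So your proposal is a reasonable roadmap, but not a proof; since the paper itself simply cites Dinh--Sibony, the honest options are to do the same or to import the dsh/structural machinery in full.
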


\begin{proof}[Proof of Proposition~\ref{goodhyperbolicset}]
The first argument is an inverse branches argument which follows Briend-Duval~\cite[Section~3]{briendduval}. Let $B:=\B(x,\epsilon)$ be a small ball around  a $\mu_f$-generic point $x$. Since $\mu_f$ is mixing, we have $\mu_f(f^{-n}(B) \cap B) \simeq \mu(B)^2$ for $n$ large enough. In particular, using $(f^{n})^*\mu_f=d^{nk}\mu_f$, we deduce there exists $C>0$ such that $f^n$ has $M(n)\geq Cd^{nk}$ inverse branches $g_1,\ldots,g_{M(n)}$ defined on $B$ with
\begin{itemize}
\item $g_i(B)\subset B$  and $g_i$ is uniformly contracting on $B$ for all $i$,
\item $g_i(B)\cap g_j(B)=\emptyset$ for all $i\neq j$.
\end{itemize}
Fix $m\geq n_0$ large enough so that $Cd^{mk}>d^{(k-1)m}\geq2$ and set
\[V:=B, \ \ U:=\bigcup_{j=1}^{M(m)}g_j(B), \ \ N:=M(m) \ \ \text{and} \ \ g:=f^m|_U.\] 
The map $g:U\to V$ is polynomial-like of topological degree $N$, whence its equilibrium measure $\nu$ is the unique probability measure which satisfies $g^*\nu=N\nu$ by the first part of Theorem \ref{tm:DSPLM}. We let $K:=\supp(\nu)$. Since the $g_i$'s are uniformly contracting, the compact set $K$ is $f^m$-hyperbolic.

To conclude, it is sufficient to verify that $N>d_{k-1}^{*}$. Fix $n\geq1$ and $\varphi$ psh on $V$. Let $\omega$ be the (normalized) restriction to $V$ of the Fubini-Study form of $\mathbb{P}^{k}$. Then, since $\Psi=\frac{1}{N}g_*$,
\begin{align*}
\|\Psi^{n}(dd^{c}\varphi)\|_U & =\int_U\left(\Psi^{n}(dd^{c}\varphi)\right)\wedge \omega^{k-1}=\int_U\frac{1}{N^{n}}\left((g^{n})_*(dd^{c}\varphi)\right)\wedge \omega^{k-1}\\
& = \frac{1}{N^{n}}\int_Udd^{c}\varphi\wedge (g^{n})^{*}\omega^{k-1}\\
&=\frac{1}{N^{n}}\int_Udd^{c}\varphi\wedge (d^{mn}\omega+dd^{c}u_{nm})^{k-1}
\end{align*}
where $(u_{n})_n$ is a uniformly bounded sequence of continuous functions on $\mathbb{P}^k$. In particular, by the Chern-Levine-Niremberg inequality, if $U\subset W\subset V$, there exists a constant $C'>0$ depending only on $W$ such that
\begin{align*}
\|\Psi^{n}(dd^{c}\varphi)\|_U & =\left(\frac{d^{(k-1)m}}{N}\right)^n\int_Udd^{c}\varphi\wedge (\omega+d^{-nm}dd^{c}u_{nm})^{k-1}\\
& \leq \left(\frac{d^{(k-1)m}}{N}\right)^n C'\|dd^c\varphi\|_W.
\end{align*}
Taking the $n$-th root and passing to the limit, we get
\[\frac{d_{k-1}^*}{N}\leq \frac{d^{(k-1)m}}{N}<1\]
by assumption. The second part of Theorem~\ref{tm:DSPLM} allows us to conclude.
\end{proof}

\section{The support of bifurcation currents}\label{sec:support}

Pick a complex manifold $\Lambda$ and let $m,k\geq1$ be so that $\dim\Lambda\geq km$. Let $(f,a_1,\ldots,a_m)$ be a dynamical $(m+1)$-tuple  of $\p^k$ of degree $d$ parametrized by $\Lambda$.

\begin{definition}
We say that the marked points $a_1,\ldots,a_m$ are \emph{transversely $J$-prerepelling (resp. properly $J$-prerepelling)} at a parameter $\lambda_0$ if there exists integers $n_1,\ldots,n_m\geq1$ such that $f_{\lambda_0}^{n_j}(a_j(\lambda_0))=z_j$ is a repelling periodic point of $f_{\lambda_0}$ and, if $z_j(\lambda)$ is the natural continuation of $z_j$ as a repelling periodic point of $f_\lambda$ in a neighborhood $U$ of $\lambda_0$, such that
\begin{enumerate}
\item $z_j(\lambda)\in J_{\lambda}$ for all $\lambda\in U$ and all $1\leq j\leq m$,
\item the graphs of $A:\lambda\mapsto(f_\lambda^{q_1}(a_1(\lambda)),\ldots,f_\lambda^{q_m}(a_m(\lambda)))$ and of $Z:\lambda\mapsto(z_1(\lambda),\ldots,z_m(\lambda))$ intersect transversely (resp. along an analytic subset of $\Lambda\times\p^k$ of codimension $km$) at $\lambda_0$.
\end{enumerate}
\end{definition}

In this section, we prove the following:

\begin{theorem}\label{tm:density}
Let $(f,a_1,\ldots,a_m)$ be a dynamical $(m+1)$-tuple  of $\p^k$ of degree $d$ parametrized by $\Lambda$ with $km\leq \dim\Lambda$.

Then the support of $T_{f,a_1}^k\wedge\cdots\wedge T_{f,a_m}^k$ coincides with the closure of the set of parameters at which $a_1,\ldots,a_m$ are transversely $J$-prerepelling.
\end{theorem}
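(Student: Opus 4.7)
\emph{Proof plan.}

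The plan is to prove both inclusions, the nontrivial one being that $\supp(\bigwedge_{i=1}^m T_{f,a_i}^k)$ is contained in the closure of the transversely $J$-prerepelling parameters.

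\emph{Easy direction.} If $\lambda_0$ is transversely $J$-prerepelling with witnessing integers $n_1,\ldots,n_m$ and repelling continuations $z_j(\lambda)$, I would replace each $a_i$ by $a_{i,n_i}$. Iterating Lemma~\ref{lm:DF} shows $T_{f,a_{i,n_i}}^k = d^{k n_i}\, T_{f,a_i}^k$, which preserves the support of the $m$-fold wedge, and then invoke the identity
\[\bigwedge_{i=1}^m T_{f,a_{i,n_i}}^k = (\pi_\Lambda)_*\Bigl(\bigwedge_{i=1}^m \pi_i^*\widehat{T}^k \wedge [\Gamma_{\mathfrak{a}'}]\Bigr)\]
from \S\ref{background}. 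Each $z_j(\lambda)$ lies in $J_{f_\lambda}=\supp \mu_{f_\lambda}$, so $\Gamma_Z \subset \supp \bigwedge \pi_i^*\widehat{T}^k$, and transverse intersection of $\Gamma_{\mathfrak{a}'}$ with $\Gamma_Z$ at $(\lambda_0,Z(\lambda_0))$ guarantees positive mass of $\bigwedge T_{f,a_i}^k$ in every neighborhood of $\lambda_0$.

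\emph{Hard direction.} Fix $\lambda_0 \in \supp(\bigwedge T_{f,a_i}^k)$ and an open neighborhood $\Omega \ni \lambda_0$. Applying Proposition~\ref{goodhyperbolicset} to $f_{\lambda_0}$ (which is where non-resonance is used) produces a ball $\mathbb{B} \subset \mathbb{P}^k$, an integer $m_0 \geq 1$, a hyperbolic $f_{\lambda_0}^{m_0}$-invariant compact set $K \Subset \mathbb{B}$, and a PLB ergodic probability measure $\nu$ supported on $K$. By stability of polynomial-like maps \cite{DS-PLM}, shrinking $\Omega$, this structure persists as a holomorphic family $g_\lambda = f_\lambda^{m_0}|_{U_\lambda}:U_\lambda\to\mathbb{B}$ of polynomial-like maps with hyperbolic continuation $K_\lambda$, equilibrium measure $\nu_\lambda$, and repelling periodic points dense in $K_\lambda$ (which are all $J$-repelling for $f_\lambda$). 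Since each $a_i$ is active at $\lambda_0$ and $\mathbb{B}$ carries positive $\mu_{f_{\lambda_0}}$-mass, one can find integers $\ell_i$ and parameters in $\Omega$ for which the iterates $f_\lambda^{\ell_i}(a_i(\lambda))$ enter $\mathbb{B}$; pulling $\nu_\lambda$ back along inverse branches of $g_\lambda^n$ starting from these entry points expresses $T_{f,a_i}^k|_\Omega$ as a limit of a laminar/horizontal current integrated against $\nu_\lambda$.

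I would then invoke Dujardin's transversality theorem for laminar currents \cite{Dujardin2012} to the $m$-fold wedge on $\Omega$. The PLB property of $\nu_\lambda$ provided by Proposition~\ref{goodhyperbolicset} is precisely the hypothesis making psh potentials integrable, so Dujardin's theorem applies and realizes $\bigwedge T_{f,a_i}^k|_\Omega$ as a weighted superposition of transverse intersections between the graphs $\Gamma_{a_{i,\ell_i+nm_0}}$ and the inverse-branch graphs of $g_\lambda^n$ above points of $K_\lambda$; the inverse-branch contractions being independent for distinct $i$ supply the transversality. Finally, density of repelling periodic points of $g_\lambda$ in $K_\lambda$ allows me to deform the target points of these intersections into $J$-repelling periodic points while preserving transversality, yielding transversely $J$-prerepelling parameters in $\Omega$. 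The principal obstacle is Steps 3--4: organising the laminar structures coming from the $m$ marked points simultaneously, so that Dujardin's transversality criterion is verified jointly on $\Omega$; this is the point at which non-resonance of the Lyapunov exponents is crucial, through the PLB property of $\nu_\lambda$.
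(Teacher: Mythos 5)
Your plan follows the same landmarks as the paper's argument (Proposition~\ref{goodhyperbolicset}, Dujardin's transversality theorem, density of periodic points in the hyperbolic set), but both directions have genuine gaps that would derail the proof as sketched.

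In the ``easy'' direction, the claim that transverse intersection of $\Gamma_{\mathfrak{a}'}$ with $\Gamma_Z$ at the single point $(\lambda_0,Z(\lambda_0))$ ``guarantees positive mass of $\bigwedge T_{f,a_i}^k$'' is not justified, and in fact isn't true as stated: the current $\bigwedge_i\pi_i^*\widehat{T}^k$ has continuous local potentials, so it charges no point, and the presence of a transverse intersection point gives no lower bound on its mass near $\lambda_0$. What is actually needed (Theorem~\ref{tm:densitypart1}) is a renormalization at the repelling cycle: one takes the connected component $S_n$ of the graph of $\mathfrak{A}_{\underline{q}+n\underline{r}}$ inside a slab $\Lambda\times\B_\delta^m(z)$, uses the uniform expansion at the $z_j$ to show these are vertical-like currents shrinking to $\{\lambda_0\}\times\B_\delta^m(z)$ after normalization, and then pairs the limiting current with $\bigwedge_i\pi_i^*\widehat{T}^k$ and invokes Lemma~\ref{lm:limit} to get strictly positive mass. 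This argument, incidentally, only requires proper (not transverse) intersection.

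In the ``hard'' direction your overall scheme is close, but the mechanism is not the one that works. You introduce a holomorphic family of polynomial-like maps with varying equilibrium measures $\nu_\lambda$ and propose pulling $\nu_\lambda$ back through inverse branches; neither step is needed nor is it clear how to run it. Proposition~\ref{goodhyperbolicset} is used only once, at $\lambda_0$, and what actually persists is the hyperbolic set $K$ itself, via its canonical holomorphic motion $h$ conjugating the dynamics over a small ball. The paper then constructs the current $\hat\nu=\int_{K^m}[\Gamma_z]\,d\nu^{\otimes m}(z)$ directly on $\B(\lambda_0,\epsilon)\times(\p^k)^m$. The role of the PLB property (hence of non-resonance) is to make the superposition potential $\Psi_i^j(t,w)=\int\log|w_i^j-h_{t,i}(z^j)|\,d\nu^{\otimes m}(z)$ locally bounded, so $\hat\nu=dd^cV$ for a bounded $V$; this is what makes $d^{-nkm}(\pi_1)_*(\hat\nu\wedge[\Gamma_{\mathfrak{a}_n}])\to\mu$ possible via potential comparison with the Fubini-Study pullbacks. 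Dujardin's Theorem~3.1 is applied to $\hat\nu\wedge[\Gamma_{\mathfrak{a}_n}]$ to extract geometrically transverse intersections with the graphs $\Gamma_{z_n}$, and density of repelling periodic points of $f^m_{\lambda_0}$ in $K$ plus persistence of transverse intersections upgrades these to transversely $J$-prerepelling parameters. Your intuition that non-resonance enters through PLB is right, but it is $\nu$ at $\lambda_0$, not a family $\nu_\lambda$, and the boundedness of the potential $V$ is the precise technical output you need.
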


\begin{remark}\normalfont
The hypothesis on the dimension of the parameter space looks a priori artificial, but transversely $J$-prerepelling parameters form analytic subsets of codimension $km$. In particular, it is not clear to me that you can prove the existence (and thus the Zariski density) of such parameters if $\dim \Lambda<km$.
\end{remark}

\subsection{Properly prerepelling marked points bifurcate}

First, we give a quick proof of the fact that properly $J$-prerepelling parameters belong to the support of $T_{f,a_1}^k\wedge \cdots \wedge T_{f,a_m}^k$, without any additional assumption.

\begin{theorem}\label{tm:densitypart1}
Let $(f,a_1,\ldots,a_m)$ be a dynamical $(m+1)$-tuple  of $\p^k$ of degree $d$ parametrized by $\Lambda$ with $km\leq \dim\Lambda$. Pick any parameter $\lambda_0\in\Lambda$ such that $a_1,\ldots,a_m$ are properly $J$-prerepelling at $\lambda_0$. Then $\lambda_0\in\supp\left(T_{f,a_1}^k\wedge\cdots\wedge T_{f,a_m}^k\right)$.
\end{theorem}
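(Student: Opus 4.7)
My plan is to reduce by iteration to the case where each marked point already sits at a repelling periodic point of $f_{\lambda_0}$, and then to combine the product formula for the wedge of bifurcation currents with Briend--Duval equidistribution at $\lambda_0$ to produce the required positive mass. First, by Lemma~\ref{lm:DF}, replacing each $a_j$ by $f_\lambda^{n_j}\circ a_j$ only multiplies $T_{f,a_j}$ by the positive factor $d^{n_j}$, so this substitution leaves the support of $\bigwedge_j T_{f,a_j}^k$ unchanged. We may therefore assume $a_j(\lambda_0)=z_j$ is itself a repelling periodic point of $f_{\lambda_0}$ of period $p_j$, with local continuation $z_j(\lambda)$, and the graphs of $\mathfrak a=(a_1,\ldots,a_m)$ and $Z=(z_1,\ldots,z_m)$ in $\Lambda\times(\p^k)^m$ still intersect properly at $w_0:=(\lambda_0,z_1,\ldots,z_m)$.

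Using the identity $\bigwedge_j T_{f,a_j}^k=(\pi_\Lambda)_*\bigl(\bigwedge_i\pi_i^*\hat T^k\wedge[\Gamma_{\mathfrak a}]\bigr)$ together with the convergence $d^{-k\ell}(a_{j,\ell})^*\omega_{\p^k}^k\to T_{f,a_j}^k$ from Lemma~\ref{lm:DF}, it is enough to show that for every neighborhood $V$ of $\lambda_0$,
\[
\liminf_{\ell\to\infty}\; d^{-k\ell\sum_j p_j}\int_V H_\ell^*\Bigl(\bigwedge_{i=1}^m \pi_i^*\omega_{\p^k}^k\Bigr)\;>\;0,
\]
where $H_\ell(\lambda):=(f_\lambda^{\ell p_1}(a_1(\lambda)),\ldots,f_\lambda^{\ell p_m}(a_m(\lambda)))$. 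The proper intersection hypothesis says exactly that the holomorphic map $B:=\mathfrak a-Z:\Lambda\to\C^{mk}$ has a zero of pure codimension $mk$ at $\lambda_0$; hence $B$ is open near $\lambda_0$ with finite positive local multiplicity $N_0\geq 1$. Choose small neighborhoods $U_j\ni z_j$ with $\mu_{f_{\lambda_0}}(\partial U_j)=0$, and then shrink $V$ so that the $N_0$ local $\mathfrak a$-preimages of every tuple $(w^*_j)\in\prod_j U_j$ lie in $V$. For generic $w\in(\p^k)^m$, each preimage $\lambda\in V$ of $w$ under $H_\ell$ corresponds to a choice of $w^*_j\in(f_{\lambda_0}^{\ell p_j})^{-1}(w_j)\cap U_j$ for each $j$ together with a local $\mathfrak a$-solution of $\mathfrak a(\lambda)=(w^*_j)$ in $V$. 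Briend--Duval equidistribution gives $\#((f_{\lambda_0}^{\ell p_j})^{-1}(w_j)\cap U_j)\sim d^{k\ell p_j}\mu_{f_{\lambda_0}}(U_j)$, and since $z_j\in\supp(\mu_{f_{\lambda_0}})$ the factor $\mu_{f_{\lambda_0}}(U_j)$ is strictly positive. Summing over the $N_0$ local branches and integrating against $\bigwedge_i\pi_i^*\omega_{\p^k}^k$ then yields the required strictly positive lower bound.

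The main delicate point is the uniformity of the preimage count: one must ensure that all $N_0$ local inverses of $\mathfrak a$ near $w_0$ remain inside $V$ uniformly over $(w^*_j)\in\prod_j U_j$, and that Briend--Duval equidistribution can be applied simultaneously in all $m$ variables while handling the pluripolar critical exceptional set. These steps are where the proper-intersection hypothesis and the fact that each $z_j$ belongs to $\supp(\mu_{f_{\lambda_0}})$ are quantitatively used.
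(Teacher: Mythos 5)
Your reduction to the case $a_j(\lambda_0)=z_j$ and your plan to bound from below the mass of $\bigwedge_j d^{-k\ell p_j}(a_{j,\ell p_j})^*\omega_{\p^k}^k$ near $\lambda_0$ are sound in spirit, but the preimage-counting step at the heart of the argument is not correct as written. A solution $\lambda\in V$ of $H_\ell(\lambda)=w$ satisfies $a_j(\lambda)\in(f_{\lambda}^{\ell p_j})^{-1}(w_j)$, i.e.\ the preimage is taken under the $\lambda$-dependent map, not under the frozen map $f_{\lambda_0}^{\ell p_j}$; your correspondence ``choose $w_j^*\in(f_{\lambda_0}^{\ell p_j})^{-1}(w_j)\cap U_j$, then solve $\mathfrak{a}(\lambda)=(w_j^*)$'' silently replaces moving inverse branches by frozen ones. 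More seriously, the proper intersection of $\Gamma_{\mathfrak a}$ with $\Gamma_Z$ gives an isolated zero of multiplicity $N_0$ for $\mathfrak a-Z$, with $Z$ \emph{moving}; it does not make $\mathfrak a$ locally $N_0$-to-one onto a neighborhood of the constant point $(z_1(\lambda_0),\ldots,z_m(\lambda_0))$. Take $k=m=1$, $a(\lambda)\equiv z_1(\lambda_0)$ constant and $z_1(\lambda)$ non-constant: the intersection is proper (even transverse), yet $\mathfrak a(\lambda)=w^*$ has no solution at all for $w^*\neq z_1(\lambda_0)$, so ``the $N_0$ local $\mathfrak a$-preimages of every tuple $(w_j^*)$'' do not exist. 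The multiplicity $N_0$ only controls equations $\mathfrak a(\lambda)=W(\lambda)$ for targets $W$ uniformly close to $Z$ (persistence of proper intersections/Rouch\'e), which is precisely why one must work with the moving inverse branches $w_j^*(\lambda)$ of $f_\lambda^{\ell p_j}$, which converge to $z_j(\lambda)$ by expansion at the $J$-repelling cycle; this is where the repelling hypothesis is used quantitatively, and it is missing from your count.

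Two further points. If $\dim\Lambda>km$ the zero set of $\mathfrak a-Z$ is positive-dimensional, so ``finite local multiplicity $N_0$'' requires first slicing to a $km$-dimensional piece (the paper reduces to $\Lambda$ open in $\C^{km}$ for exactly this reason). And Briend--Duval equidistribution of preimages holds for $w_j$ off an exceptional set and for the fixed map $f_{\lambda_0}$; to make your lower bound uniform in $\lambda\in V$ and in $w$ you would need an extra argument. The paper avoids both difficulties by never counting preimages: it integrates $\bigwedge_j\pi_j^*\widehat{T}^k$, whose potentials are continuous, against the renormalized graph currents $[S_n]$ of $\mathfrak A_{\underline q+n\underline r}$ restricted to $\Lambda\times\B_\delta^m(z)$; expansion forces these, after normalization, to converge to the vertical slice $[\{\lambda_0\}\times\B_\delta^m(z)]$, and positivity then comes from $\mu_{\lambda_0}(\B(z_j,\delta))>0$ because the cycle is $J$-repelling. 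Reworking your argument along these lines (moving inverse branches plus persistence of proper intersections, or the graph-current limit) is needed to close the gap.
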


The proof of this result is an adaptation of the strategy of Buff and Epstein~\cite{buffepstein} and the strategy of Berteloot, Bianchi and Dupont~\cite{BBD}, see also~\cite{Article1,AGMV}.
Since it follows closely that of~\cite[Theorem~B]{AGMV}, we shorten some parts of the proof.

\medskip

  Before giving the proof of Theorem~\ref{tm:densitypart1}, remark that our properness assumption is equivalent to saying that the local hypersurfaces
\[X_j:=\{\lambda\in\Lambda\, ; \ f_\lambda^{q_j}(a_j(\lambda))=z_j(\lambda)\}\]
intersecting at $\lambda_0$ satisfy $\mathrm{codim}\left(\bigcap_jX_j\right)=km$.

\begin{proof}[Proof of Theorem~\ref{tm:densitypart1}]
According to \cite[Lemma 6.3]{Article1}, we can reduce to the case when $\Lambda$ is an open set of $\C^{km}$. 
Take a small ball $B$ centered at $\lambda_0$ in $\Lambda$.
Up to reducing $B$, we can assume $z_j(\lambda)$ can be followed as a repelling periodic point of $f_\lambda$ for all $\lambda\in B$. Up to reducing $B$, our assumption is equivalent to the fact that $\bigcap_jX_j=\{\lambda_0\}$.

~

We let $\mu:=T_{f,a_1}^k\wedge\cdots\wedge T_{f,a_m}^k$. Our aim here is to exhibit a basis of neighborhood $\{\Omega_n\}_n$ of $\lambda_0$ in $\B$ with $\mu(\Omega_n)>0$ for all $n$.
For any $m$-tuple $\un:=(n_1,\ldots,n_m)\in(\mathbb{N}^*)^m$, we let
\begin{eqnarray*}
F_{\un}:\Lambda\times(\p^k)^m & \longrightarrow & \Lambda\times(\p^k)^m\\
(\lambda,z_1,\ldots,z_m) & \longmapsto & (\lambda,f^{n_1}_\lambda(z_1),\ldots,f^{n_m}_\lambda(z_m))~.
\end{eqnarray*}
For a $m$-tuple $\un=(n_1,\ldots,n_m)$ of positive integers, we set 
\[|\un|:=n_1+\cdots+n_m~.\]
We also denote
\[\mathfrak{A}_{\un}(\lambda):=\left(f_\lambda^{n_1}(a_1(\lambda)),\ldots,f_\lambda^{n_m}(a_m(\lambda))\right), \ \lambda\in\Lambda.\]
As in~\cite{AGMV}, we have the following.
\begin{lemma}\label{lm:formula2}
For any $m$-tuple $\un=(n_1,\ldots,n_m)$ of positive integers, we let $\Gamma_{\un}$ be the graph in $\Lambda\times(\p^k)^m$ of $\mathfrak{A}_{\un}$. Then, for any Borel set $B\subset\Lambda$, we have
\begin{eqnarray*}
\mu(B) & = & d^{-k\cdot|\un|}\int_{B\times(\p^k)^m}\left(\bigwedge_{j=1}^m(\pi_j)^*\left(\widehat{T}^k\right)\right)\wedge\left[\Gamma_{\un}\right]~.
\end{eqnarray*}
\end{lemma}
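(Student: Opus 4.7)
\smallskip

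The plan is to reduce the statement to the formula already established in Section~\ref{background} (the analog of~\cite[Lemma 2.6]{AGMV}), which reads
\[\bigwedge_{j=1}^m T_{f,a_j}^{k} \;=\; (\pi_\Lambda)_* \left( \bigwedge_{j=1}^m \pi_j^*\left(\widehat{T}^{k}\right) \wedge \left[\Gamma_{\mathfrak{a}}\right]\right),\]
applied not to the original marked points $a_j$ but to the iterated marked points $b_j:=f^{n_j}\circ a_j$. By definition, $\Gamma_{\un}$ is precisely the graph $\Gamma_{(b_1,\ldots,b_m)}$, so plugging the $b_j$'s into the above formula immediately gives
\[\bigwedge_{j=1}^m T_{f,b_j}^{k} \;=\; (\pi_\Lambda)_* \left( \bigwedge_{j=1}^m \pi_j^*\left(\widehat{T}^{k}\right) \wedge \left[\Gamma_{\un}\right]\right).\]
Once I know that $T_{f,b_j}^k=d^{k n_j}T_{f,a_j}^k$ for each $j$, taking the wedge product yields $\bigwedge_j T_{f,b_j}^k=d^{k|\un|}\mu$, and the claimed identity follows by integrating the resulting equality of positive measures against the characteristic function of $B$ (using the definition of $(\pi_\Lambda)_*$).

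\smallskip

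The one nontrivial ingredient is therefore the identity $T_{f,b_j}=d^{n_j}T_{f,a_j}$, which I would obtain as follows. Fix $j$, set $n:=n_j$, and denote $a:=a_j$, $b:=b_j=f^n\circ a$. For any $\ell\ge 0$, observe that $b_\ell(\lambda):=f_\lambda^\ell(b(\lambda))$ coincides with $a_{\ell+n}(\lambda):=f_\lambda^{\ell+n}(a(\lambda))$. Applying Lemma~\ref{lm:DF} both to the marked point $b$ and to the marked point $a$ we obtain
\[d^{\ell}\,T_{f,b}+dd^c v_\ell \;=\; (b_\ell)^*\omega_{\p^k} \;=\; (a_{\ell+n})^*\omega_{\p^k} \;=\; d^{\ell+n}\,T_{f,a}+dd^c u_{\ell+n},\]
with $(u_{\ell+n})_\ell$ and $(v_\ell)_\ell$ locally uniformly bounded sequences of continuous functions on $\Lambda$. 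Dividing by $d^\ell$ gives
\[T_{f,b}-d^{n}T_{f,a} \;=\; dd^c\!\left(d^{-\ell}(u_{\ell+n}-v_\ell)\right),\]
and the right-hand side tends to $0$ in $L^1_{loc}(\Lambda)$ as $\ell\to\infty$, so that $T_{f,b}=d^n T_{f,a}$ as claimed. Raising to the $k$-th power (which is licit since these are $(1,1)$-currents with continuous potentials) yields $T_{f,b_j}^k=d^{kn_j}T_{f,a_j}^k$, completing the reduction. The argument is largely bookkeeping; the only point that requires any care is the uniform boundedness in $\ell$ coming from Lemma~\ref{lm:DF}, which is precisely what allows the correcting $dd^c$ term to vanish in the limit.
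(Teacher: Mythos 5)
Your proof is correct, and it is essentially the argument the paper appeals to by citing~\cite{AGMV}: apply the product formula at the end of Section~\ref{background} to the shifted marked points $b_j:=f^{n_j}\circ a_j$ and reduce to the scaling identity $T_{f,b_j}=d^{n_j}T_{f,a_j}$. Your derivation of that identity via Lemma~\ref{lm:DF} and the vanishing of $d^{-\ell}(u_{\ell+n}-v_\ell)$ in $L^1_{\mathrm{loc}}$ is sound; it could equally be obtained in one line from the invariance $\hat{f}^*\widehat{T}=d\,\widehat{T}$ together with $(\hat{f}^{n_j})_*[\Gamma_{a_j}]=[\Gamma_{b_j}]$ and $\pi_\Lambda\circ\hat{f}=\pi_\Lambda$.
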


\medskip

Suppose that the point $z_j$ is $r_j$-periodic. For the sake of simplicity, we let in the sequel $\mathfrak{A}_{n}:=\mathfrak{A}_{\underline{q}+n\underline{r}}$, where $\underline{q}=(q_1,\ldots,q_m)$, $\underline{r}=(r_1,\ldots,r_m)$ are given as above and $\underline{q}+n\underline{r}=(q_1+nr_1,\ldots,q_m+nr_m)$. Again as above, we let $\Gamma_n$ be the graph of $\mathfrak{A}_n$.

~

\par\noindent Let $z:=(z_1,\ldots,z_m)$ and fix any small open neighborhood $\Omega$ of $\lambda_0$ in $\Lambda$.
Set
\[I_n:=\int_{\Omega\times(\p^k)^m}\left(\bigwedge_{j=1}^m(\pi_j)^*\left(\widehat{T}^k\right)\right)\wedge\left[\Gamma_{n}\right].\]
Since $z_j(\lambda)$ is repelling and periodic for $f_\lambda$ for all $\lambda\in B$ (if $B$ has been chosen small enough), there exists a constant $K>1$ such that
\[d_{\p^k}(f_\lambda^{r_j}(z),f_\lambda^{r_j}(w))\geq K\cdot d_{\p^k}(z,w)\]
for all $z,w\in\B(z_j(\lambda_0),\epsilon)$ and all $\lambda\in B$ for some given $\epsilon>0$. In particular, if $S_n$ is the connected component of $\Gamma_{n}\cap \Lambda\times\B_\epsilon^m(z)$ containing $(\lambda_0,z)$, the current $[S_n]$ is vertical-like in in $\Lambda\times\B_\epsilon^m(z)$ and there exists $n_0\geq1$ and a basis of neighborhood $\Omega_n$ of $\lambda_0$ in $\Lambda$ such that
\[\supp([S_n])=S_n\subset\Omega_{n}\times\B_\epsilon^m(z),\]
for all $n\geq n_0$.

~

 Let $S$ be any weak limit of the sequence $[S_n]/\|[S_n]\|$. Then $S$ is a closed positive $(mk,mk)$-current of mass $1$ in $B\times\B^m_\epsilon(z)$ with $\supp(S)\subset\{\lambda_0\}\times\B_\epsilon^m(z)$. Hence $S=M\cdot[\{\lambda_0\}\times\B^m_\epsilon(z)]$, where $M^{-1}>0$ is the volume of $\B_\epsilon^m(z)$ for the volume form $\bigwedge_j(\omega_j^k)$, where $\omega_j=(p_j)^*\omega_{\p^{k}}$ and $p_j:(\p^k)^m\to\p^k$ is the projection on the $j$-th coordinate.

As a consequence, $[S_n]/\|[S_n]\|$ converges weakly to $S$ as $n\to\infty$ and, since the $(mk,mk)$-current $\bigwedge_{j=1}^m(\pi_j)^*(\widehat{T}^k)$ is the wedge product of $(1,1)$-currents with continuous potentials,  we have
\[\bigwedge_{j=1}^m(\pi_j)^*\left(\widehat{T}^k\right)\wedge\frac{[S_n]}{\|[S_n]\|}\longrightarrow \bigwedge_{j=1}^m(\pi_j)^*\left(\widehat{T}^k\right)\wedge S\]
as $n\to+\infty$. Whence
\begin{align*}
\liminf_{n\to\infty}\left(\|[S_n]\|^{-1}\cdot I_n\right)& \geq\liminf_{n\to\infty}\int\bigwedge_{j=1}^m(\pi_j)^*\left(\widehat{T}^k\right)\wedge\frac{[S_n]}{\|[S_n]\|}\\
&\geq \int\bigwedge_{j=1}^m(\pi_j)^*\left(\widehat{T}^k\right)\wedge S.
\end{align*}
By the above, this gives
\[\liminf_{k\to\infty}\left(\|[S_n]\|^{-1}\cdot I_n\right)\geq M\cdot \int[\{\lambda_0\}\times\B_\epsilon^m(z)]\wedge\bigwedge_{j=1}^m(\pi_j)^*\left(\widehat{T}^k\right)~,\]
In particular, there exists $n_2\geq n_1$ such that for all $n\geq n_2$,
\[\|[S_n]\|^{-1}\cdot I_n\geq \frac{M}{2}\cdot\int[\{\lambda_0\}\times\B_\epsilon^m(z)]\wedge \bigwedge_{j=1}^m(\pi_j)^*\left(\widehat{T}^k\right)~.\]
Finally, since $[S_n]$ is a vertical-like current, up to reducing $\epsilon>0$, Fubini Theorem gives
\[\liminf_{n\to\infty}\|[S_n]\|\geq \prod_{j=1}^m\int_{\B(z_j,\epsilon)}\omega_{\textup{FS}}^k\geq \left(c\cdot \epsilon^{2k}\right)^m>0~.\]
Up to increasing $n_0$, we may assume $\|[S_n]\|\geq (c\epsilon^{2k})^m/2$ for all $n\geq n_0$. Letting $\alpha=M(c\epsilon^{2k})^m/4>0$, we find
\[\int_{\Omega\times(\p^k)^m}\left(\bigwedge_{j=1}^m(\pi_j)^*\left(\widehat{T}^k\right)\right)\wedge\left[\Gamma_{n}\right]\geq \alpha\int\left[\{\lambda_0\}\times\B_\epsilon^m(z)\right]\wedge\bigwedge_{j=1}^m(\pi_j)^*\left(\widehat{T}^k\right).\]

To conclude the proof of Theorem~\ref{tm:densitypart1}, we rely on the following purely dynamical result, which is an immediate adaptation of~\cite[Lemma~3.5]{AGMV}.
\begin{lemma}\label{lm:limit}
For any $\delta>0$ and $x=(x_1,\ldots,x_m)\in\left(\supp(\mu_{\lambda_0})\right)^m$, we have
\begin{eqnarray*}
\int\left[\{\lambda_0\}\times\B_\delta^m(x)\right]\wedge\bigwedge_{j=1}^m(\pi_j)^*\left(\widehat{T}^k\right)=\prod_{j=1}^m\mu_{\lambda_0}(\B(x_j,\delta))>0.
\end{eqnarray*} 
\end{lemma}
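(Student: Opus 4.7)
The plan is a direct computation by slicing, exploiting the fact that $\widehat{T}$ has continuous local potentials (so all wedge products involved are well-defined) and that the currents $(\pi_j)^*\widehat{T}^k$ live on complementary directions once restricted to the fiber $\{\lambda_0\}\times(\p^k)^m$.

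First I would interpret the left-hand side as an integral over the slice $\{\lambda_0\}\times \B_\delta^m(x)$. Since the $(1,1)$-current $\widehat{T}$ has continuous potentials on $\Lambda\times\p^k$, so does the positive $(k,k)$-current $\widehat{T}^k$. Consequently, each $\pi_j^*\widehat{T}^k$ is a well-defined positive $(k,k)$-current on $\Lambda\times(\p^k)^m$ with continuous potential, and one can wedge them with the current of integration on the smooth slice $\{\lambda_0\}\times \B_\delta^m(x)$. The restriction satisfies
\[
\bigl(\pi_j^*\widehat{T}^k\bigr)\big|_{\{\lambda_0\}\times(\p^k)^m}=\tilde{\pi}_j^*\bigl(\widehat{T}^k|_{\{\lambda_0\}\times\p^k}\bigr)=\tilde{\pi}_j^*\mu_{\lambda_0},
\]
where $\tilde{\pi}_j:(\p^k)^m\to\p^k$ is the projection onto the $j$-th factor; this is the standard slicing property for currents with continuous potentials, together with the fact that $\widehat{T}^k$ restricted to $\{\lambda_0\}\times\p^k$ equals the maximal entropy measure $\mu_{\lambda_0}$ of $f_{\lambda_0}$ (recalled in Section~\ref{background}).

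Next, I would form the wedge product. The measures $\tilde{\pi}_1^*\mu_{\lambda_0},\ldots,\tilde{\pi}_m^*\mu_{\lambda_0}$ live along pairwise complementary coordinate subspaces of $(\p^k)^m$, so their wedge product is nothing but the product measure $\mu_{\lambda_0}^{\otimes m}$ on $(\p^k)^m$. Fubini's theorem then gives
\[
\int_{\B_\delta^m(x)}\bigwedge_{j=1}^m\tilde{\pi}_j^*\mu_{\lambda_0}=\int_{\B(x_1,\delta)\times\cdots\times \B(x_m,\delta)}d\mu_{\lambda_0}^{\otimes m}=\prod_{j=1}^m\mu_{\lambda_0}\bigl(\B(x_j,\delta)\bigr),
\]
which is exactly the desired identity.

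Finally, the strict positivity of each factor is immediate from the hypothesis $x_j\in\supp(\mu_{\lambda_0})$, since by definition the support of a Borel measure is the set of points all of whose open neighborhoods have positive mass. The only technical point requiring care is justifying the slicing formula for $(\pi_j^*\widehat{T}^k)|_{\{\lambda_0\}\times(\p^k)^m}$; but because $\widehat{T}$ (and hence $\widehat{T}^k$) is given by continuous local potentials pulled back through a submersion, this follows from the standard theory of intersection of closed positive currents with continuous potentials, exactly as in \cite[Lemma~3.5]{AGMV}.
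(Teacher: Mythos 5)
Your argument is correct and is essentially the paper's: the paper gives no independent proof, simply invoking \cite[Lemma~3.5]{AGMV}, whose content is exactly this computation — restrict each $\pi_j^*\widehat{T}^k$ (well-defined by continuity of the local potentials of $\widehat{T}$) to the slice $\{\lambda_0\}\times(\p^k)^m$ to get $\tilde{\pi}_j^*\mu_{\lambda_0}$, identify the wedge of these pullbacks with the product measure $\mu_{\lambda_0}^{\otimes m}$, and conclude by Fubini and the definition of $\supp(\mu_{\lambda_0})$. No gaps beyond the slicing justification you already flag, which is standard Bedford--Taylor theory for currents with continuous potentials.
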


\medskip

We can now conclude the proof of Theorem \ref{tm:densitypart1}. 
Pick any open neighborhood $\Omega$ of $\lambda_0$ in $\Lambda$. By the above and Lemma \ref{lm:limit}, we have an integer $n_0\geq1$ and constants $\alpha,\epsilon>0$ such that for all $n\geq n_0$,
\begin{eqnarray*}
\mu(\Omega)\geq \alpha\cdot d^{-k\left(|\underline{q}|+n|\underline{r}|\right)}\prod_{j=1}^m\mu_f(\mathbb{B}(z_j,\epsilon))>0~.
\end{eqnarray*}
In particular, this yields $\mu(\Omega)>0$. By assumption, this holds for a basis of neighborhoods of $\lambda_0$ in $\Lambda$, whence we have $\lambda_0\in\textup{supp}(\mu)$.
\end{proof}

\subsection{Density of transversely prerepelling parameters}
To finish the proof of Theorem~\ref{tm:density}, it is sufficient to prove that any point of the support of $T_{f,a_1}^{k}\wedge\cdots\wedge T_{f,a_m}^{k}$ an be approximated by transversely $J$-prerepelling parameters. We follow the strategy of the proof of Theorem~0.1 of \cite{Dujardin2012} to establish this approximation property. Precisely, we prove here the following.

\begin{theorem}\label{tm:densitypart2}
Let $(f,a_1,\ldots,a_m)$ be a dynamical $(m+1)$-tuple  of $\p^k$ of degree $d$ parametrized by $\Lambda$ with $km\leq \dim\Lambda$.

 Then, any parameter $\lambda\in\Lambda $ lying in the support of the current $T_{f,a_1}^k\wedge\cdots\wedge T_{f,a_m}^k$ can be approximated by parameters at which $a_1,\ldots,a_m$ are transversely $J$-prerepelling.
\end{theorem}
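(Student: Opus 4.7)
The plan is to adapt Dujardin's strategy from Theorem~0.1 of~\cite{Dujardin2012} to the multi-marked-point setting on $\p^k$. Fix $\lambda_0\in\supp(T_{f,a_1}^k\wedge\cdots\wedge T_{f,a_m}^k)$; we aim to produce transversely $J$-prerepelling parameters in any neighborhood of $\lambda_0$.

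First, we exploit the non-resonance of Lyapunov exponents by applying Proposition~\ref{goodhyperbolicset} to $f_{\lambda_0}$: this provides a ball $\B\subset\p^k$, an integer $\ell\geq1$, an $f_{\lambda_0}^{\ell}$-invariant hyperbolic compact set $K\Subset\B$ on which $f_{\lambda_0}^{\ell}$ behaves as a polynomial-like map of topological degree $N\geq 2$, and a PLB ergodic probability measure $\nu$ on $K$ with $(f_{\lambda_0}^{\ell}|_K)^*\nu=N\nu$, in whose support the repelling periodic points are dense. By hyperbolicity, $K$ extends to a holomorphic motion $K_\lambda\subset\B$ on a neighborhood $U$ of $\lambda_0$ conjugating the dynamics and yielding corresponding PLB measures $\nu_\lambda$. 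For every $n\geq 1$, the map $f_\lambda^{\ell n}$ has $N^n$ holomorphic inverse branches $g_{j,\lambda}^{(n)}:\B\to\B$ depending holomorphically on $\lambda\in U$, all uniformly contracting.

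Next, for any $y\in K$ the graphs
\[
\Gamma_{j,y}^{(n)}:=\bigl\{\bigl(\lambda,g_{j,\lambda}^{(n)}(y(\lambda))\bigr)\, ;\ \lambda\in U\bigr\}
\]
are vertical-like in $U\times\B$, and the normalized sum $N^{-n}\sum_{j=1}^{N^n}[\Gamma_{j,y}^{(n)}]$ converges, by the first assertion of Theorem~\ref{tm:DSPLM} applied fiberwise together with continuity of potentials, to $\widehat T^k$ restricted to $U\times\B$. Injecting this into the formula
\[
\mu=(\pi_\Lambda)_*\Bigl(\bigwedge_{i=1}^m \pi_i^*\widehat T^k\wedge[\Gamma_{\mathfrak{a}}]\Bigr),
\]
valid on the open set $\pi_\Lambda^{-1}(U)\cap\bigcap_i\pi_i^{-1}(\B)$, realizes $\mu$ locally as a weighted average of intersections of the $m$ vertical inverse-branch laminations with the horizontal graph $\Gamma_{\mathfrak{a}}$. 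A geometric intersection point corresponds precisely to a parameter $\lambda$ for which $f_\lambda^{\ell n}(a_i(\lambda))=y_{i,\lambda}$ for all $i$; choosing each $y_i$ to be a repelling periodic point of $f_{\lambda_0}^{\ell}|_K$ (a dense condition in $K$), this is a $J$-prerepelling condition.

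Finally, since $\mu(\Omega)>0$ for every neighborhood $\Omega$ of $\lambda_0$, the averaged intersection mass over $\Omega$ stays positive as $n\to\infty$. To upgrade positivity of the wedge mass to genuine transverse intersections, we invoke Dujardin's transversality theorem for laminar currents~\cite{Dujardin2012}: the PLB property of $\nu$ (itself a consequence of non-resonance, via Proposition~\ref{goodhyperbolicset}) forces the wedge product of the $m$ vertical-like currents with $[\Gamma_{\mathfrak{a}}]$ to coincide with the corresponding geometric intersection, so that a positive proportion of the inverse-branch-graph intersections with $\Gamma_{\mathfrak{a}}$ are geometrically transverse. Unwinding the definitions gives transversely $J$-prerepelling parameters arbitrarily close to $\lambda_0$. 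The main obstacle is precisely this last step: one must verify that Dujardin's geometric-versus-wedge comparison extends from two laminar currents on a surface to the intersection of $m$ vertical-like laminations on $(\p^k)^m$ with a horizontal graph, and that transversality is preserved after pushing forward by $\pi_\Lambda$---this is where the PLB hypothesis, and therefore the non-resonance assumption, becomes essential.
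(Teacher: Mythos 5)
Your setup (Proposition~\ref{goodhyperbolicset}, the holomorphic motion of the hyperbolic set $K$, and Dujardin's geometric intersection theorem from \cite{Dujardin2012}) is the right toolbox, but the central step of your argument does not work as stated, for two related reasons. First, the normalized sums of inverse-branch graphs $N^{-n}\sum_j[\Gamma^{(n)}_{j,y}]$ do not converge to $\widehat T^k$ on $U\times\B$: the preimages of a point under the polynomial-like restriction $g=f^{\ell}|_U:U\to\B$ equidistribute towards the equilibrium measure $\nu$ of that polynomial-like map (Theorem~\ref{tm:DSPLM}), not towards the slice of $\widehat T^k$. These two objects are different in general; indeed the whole point of passing to the polynomial-like restriction is that $\nu$ is PLB while nothing of the sort is known for $\mu_{f_{\lambda_0}}$, so you cannot substitute one for the other. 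Second, and more seriously, even if you had some laminar representation of $\pi_i^*\widehat T^k$ over $U\times\B$, intersecting it with the \emph{static} graph $\Gamma_{\mathfrak a}$ on $\pi_\Lambda^{-1}(U)\cap\bigcap_i\pi_i^{-1}(\B)$ only sees the part of $\mu$ where $a_i(\lambda)\in\B$ for all $i$. The ball $\B$ produced by Proposition~\ref{goodhyperbolicset} depends only on $f_{\lambda_0}$ and has nothing to do with the marked points, so near $\lambda_0$ this restricted set may well be empty and your local realization of $\mu$ gives the zero measure; positivity of $\mu(\Omega)$ for $\Omega\ni\lambda_0$ is never actually connected to intersections inside $U\times\B^m$. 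The information that $\lambda_0\in\supp\mu$ is about the non-normality of the \emph{forward iterates} of the marked points, and your proof never brings those iterates into play.

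The paper's route fixes exactly this: one forms the uniformly laminar $(km,km)$-current $\widehat\nu=\int_{K^m}[\Gamma_z]\,d\nu^{\otimes m}(z)$ made of graphs of the motion, shows via the PLB property (Lemma~\ref{lm:PLB}) that $\widehat\nu=dd^cV$ with $V$ locally bounded, and then proves the convergence $d^{-kmn}\,\mathfrak a_n^*\widehat\nu\to\mu$, where $\mathfrak a_n(\lambda)=(f_\lambda^n(a_1(\lambda)),\ldots,f_\lambda^n(a_m(\lambda)))$; the bounded potential is what lets one compare $\widehat\nu$ with the smooth form $\widehat\Omega$ under pullback by high iterates. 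This is the step that converts $\lambda_0\in\supp\mu$ into positive mass of $\widehat\nu\wedge[\Gamma_{\mathfrak a_n}]$ near $\lambda_0$ for large $n$; Dujardin's Theorem~3.1 then says this intersection is geometric (only transverse intersections of $\Gamma_{\mathfrak a_n}$ with graphs $\Gamma_z$, $z\in K^m$, contribute), and one concludes by approximating $z$ by repelling periodic points of $f_{\lambda_0}^{\ell}|_K$ and using persistence of transverse intersections. Your proposal is missing this pullback/convergence mechanism (or an equivalent device using the forward orbit of the marked points), so as written it has a genuine gap; the final worry you raise about extending the geometric-intersection comparison is, by contrast, unproblematic, since $\widehat\nu$ is a direct integral of graphs and \cite[Theorem~3.1]{Dujardin2012} applies to its wedge with the graph current $[\Gamma_{\mathfrak a_n}]$.
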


We rely on the following property of \emph{PLB} measures (see \cite{DS-PLM}):

\begin{lemma}\label{lm:PLB}
Let $\nu$ be PLB with compact support in a bounded open set $W\subset\C^k$ and let $\psi$ be a psh function on $\C^k$. The function $G_\psi$ defined by
\[G_\psi(z):=\int\psi(z-w)d\nu(w), \ z\in\C^k,\]
is psh and locally bounded on $\C^k$.
\end{lemma}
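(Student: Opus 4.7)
The plan is to show three things in sequence: that the integral defining $G_\psi(z)$ is finite for every $z$, that $G_\psi$ is plurisubharmonic, and that $G_\psi$ is locally bounded on $\C^k$.

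First I would handle finiteness. For fixed $z\in\C^k$, the map $w\mapsto z-w$ is an affine holomorphic automorphism, so $\psi_z:w\mapsto\psi(z-w)$ is psh on $\C^k$, and in particular psh on $W$. The PLB hypothesis on $\nu$ then gives directly $G_\psi(z)=\int\psi_z\,d\nu\in\R$ for every $z$.

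Next I would prove plurisubharmonicity by reducing to the corresponding property of $\psi$, with a uniform upper bound to justify limit exchanges. Fixing a compact $K\subset\C^k$, since $W$ is bounded the set $K-\overline{W}$ is compact, so $M_K:=\sup_{K-\overline{W}}\psi<+\infty$; thus $\psi(z-w)\le M_K$ for all $z\in K$ and $w\in\supp(\nu)$. With this uniform majorant, reverse Fatou's lemma yields upper semicontinuity of $G_\psi$ on $K$, and Fubini--Tonelli applied to the circular sub-mean inequality $\psi(z_0-w)\le\frac{1}{2\pi}\int_0^{2\pi}\psi(z_0+e^{i\theta}v-w)\,d\theta$ produces the same inequality for $G_\psi$. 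Since $K$ is arbitrary, $G_\psi$ is psh on $\C^k$.

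The hard part will be local boundedness. The upper bound $G_\psi(z)\le M_K\,\nu(W)$ on $K$ is immediate from the estimate above. For the lower bound I would rely on the uniform form of the PLB hypothesis: there is a constant $C_K>0$ such that $\int u\,d\nu\ge -C_K$ for every psh $u$ on $W$ with $u\le 0$. This uniformity is a standard consequence of pointwise integrability via Banach--Steinhaus applied to the convex cone of non-positive psh functions on $W$, which is compact in $L^1_{\mathrm{loc}}(W)$; it is part of the Dinh--Sibony framework for PLB measures referenced just before the lemma. Applied to $\psi_z-M_K\le 0$ on $W$ for each $z\in K$, it yields $G_\psi(z)\ge M_K\,\nu(W)-C_K$ on $K$. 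The only non-formal step in the whole argument is extracting this uniform PLB property from the pointwise integrability assumption; once it is in hand, the rest is routine psh bookkeeping.
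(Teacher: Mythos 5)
Your overall scheme (finiteness from the PLB hypothesis applied to $w\mapsto\psi(z-w)$, upper semicontinuity by reverse Fatou under the uniform majorant $M_K=\sup_{K-\overline W}\psi$, the submean inequality by Tonelli, and the upper bound $G_\psi\le M_K\,\nu(W)$ on $K$) is correct; note that the paper itself does not prove this lemma but simply quotes it from Dinh--Sibony, so the only thing to check is whether your argument actually closes the one nontrivial point, the local lower bound.

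There it has a genuine gap: the ``uniform form of the PLB hypothesis'' you invoke is false as stated. The cone of non-positive psh functions on $W$ is invariant under multiplication by positive scalars (already the constants $u\equiv -n$ lie in it), so no single constant $C_K$ can satisfy $\int u\,d\nu\ge -C_K$ for \emph{all} psh $u\le 0$ on $W$ unless all these integrals vanish; for the same reason this cone is unbounded, hence certainly not compact, in $L^1_{\mathrm{loc}}(W)$, and Banach--Steinhaus (a statement about families of continuous operators, not about one functional on a non-compact cone) does not produce the claimed bound. What is true, and is the actual Dinh--Sibony statement you need, is uniform boundedness on \emph{compact} (or $L^1_{\mathrm{loc}}$-bounded and locally uniformly upper-bounded) families of psh functions: if $\mathcal F$ is such a family and $\sup_{u\in\mathcal F}\int u\,d\nu=-\infty$ failed, one takes $u_n\in\mathcal F$ with $\int u_n\,d\nu\le -4^n$ (after subtracting the uniform local upper bound) and observes that $u:=\sum_n 2^{-n}u_n$ is psh on a neighborhood of $\supp(\nu)$ (the series converges in $L^1_{\mathrm{loc}}$ by boundedness of $\mathcal F$) while $\int u\,d\nu=-\infty$, contradicting the PLB property. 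Your argument is repaired by applying this uniform statement not to the whole cone but to the family $\{w\mapsto\psi(z-w)-M_K:\ z\in K\}$, which is compact in $L^1_{\mathrm{loc}}(W)$ (translation acts continuously on $\psi\in L^1_{\mathrm{loc}}(\C^k)$ and $K$ is compact) and consists of non-positive psh functions on a neighborhood of $\supp(\nu)$; this gives the lower bound $G_\psi\ge M_K\,\nu(W)-C_K$ on $K$ and completes the proof. As written, however, the key uniformity step is both incorrectly stated and incorrectly justified, so the lower-bound argument does not stand.
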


\begin{proof}[Proof of Theorem \ref{tm:densitypart2}]
 We follow the strategy of the proof of \cite[Theorem 0.1]{Dujardin2012}.
 Write $\mu:=T^k_{f,a_1}\wedge\cdots\wedge T_{f,a_m}^k$
and pick $\lambda_0\in\supp(\Omega)$.

\smallskip

According to Proposition~\ref{goodhyperbolicset}, there exists an integer $m\geq1$ and a $f_{\lambda_0}^m$-compact set $K\subset\p^k$ contained in a ball and $N\geq2$ such that 
\begin{itemize}
\item $f_{\lambda_0}^m|_K$ is uniformly hyperbolic and repelling periodic points of $f_{\lambda_0}^m$ are dense in $K$,
\item there exists a unique probability measure $\nu$ supported on $K$ such that $(f_{\lambda_0}^m|_K)^*\nu=N\nu$ which is PLB.
\end{itemize}
Since $K$ is hyperbolic, there exists $\epsilon>0$ and a unique holomorphic motion $h:\B(\lambda_0,\epsilon)\times K\to\p^k$ which conjugates the dynamics, i.e. $h$ is continuous and such that
\begin{itemize}
\item for all $\lambda\in\B(\lambda_0,\epsilon)$, the map $h_\lambda:=h(\lambda,\cdot):K\to\p^k$ is injective and $h_{\lambda_0}=\mathrm{id}_K$,
\item for all $z\in K$, the map $\lambda\in\B(\lambda_0,\epsilon)\mapsto h_\lambda(z)\in\p^k$ is holomorphic, and
\item for all $(\lambda,z)\in \B(\lambda_0,\epsilon)\times K$, we have $h_\lambda\circ f_{\lambda_0}^m(z)=f_\lambda^m\circ h_\lambda(z)$,
\end{itemize}
see e.g. \cite[Theorem 2.3 p. 255]{demelovanstrien} or \cite[Appendix~A.1]{BBD}.

For all $z:=(z_1,\ldots,z_m)\in K^m$, we denote by $\Gamma_z$ the graph of the holomorphic map $\lambda\mapsto (h_\lambda(z_1),\ldots,h_\lambda(z_m))$.

\medskip

We define a closed positive $(km,km)$-current on $\B(\lambda_0,\epsilon)\times(\p^k)^m$ by letting
\[\widehat{\nu}:=\int_{K^m}[\Gamma_z]\mathrm{d}\nu^{\otimes m}(z),\]
where $\Gamma_z=\{(\lambda,h_\lambda(z_1),\ldots,h_\lambda(z_m))\, ; \ \lambda\in\B(\lambda_0,\epsilon)\}$ for all $z=(z_1,\ldots,z_m)\in K^m$.

\begin{claim}
There exists a $(km-1,km-1)$-current $U$ on $\mathbb{B}(\lambda_0,\epsilon)\times(\p^{k})^{m}$ which is locally bounded and such that $\hat{\nu}=dd^cU$.
\end{claim}

Recall that we have set $\mathfrak{a}_n(\lambda):=(f_\lambda^n(a_1(\lambda)),\ldots,f_\lambda^n(a_m(\lambda)))$. We define $\mathfrak{a}_n^*\hat{\nu}$ by
\[\mathfrak{a}_n^*\hat{\nu}:=(\pi_{1})_*\left(\hat{\nu}\wedge [\Gamma_{\mathfrak{a}_n}]\right),\]
where $\pi_1:\B(t_0,\epsilon)\times(\p^k)^m\to\B(t_0,\epsilon)$ is the canonical projection onto the first coordinate.
According to the claim, locally we have $\hat{\nu}=dd^cU$, for some bounded $(km-1,km-1)$-current $U$. In particular, we get $\mathfrak{a}_n^*\hat{\nu}=\mathfrak{a}_n^*(dd^cU)$, as expected.

Let $\omega$ be the Fubini-Study form of $\p^k$ and $\hat{\Omega}:=(\pi_2)^*(\omega^{k}\otimes\cdots\otimes\omega^k)$, where $\pi_2:\B(\lambda_0,\epsilon)\times(\p^k)^m\to(\p^k)^m$ is the canonical projection onto the second coordinate. Then
\[\hat{\nu}-\hat{\Omega}=dd^cV\]
where $V$ is bounded on $\B(\lambda_0,\epsilon)\times(\p^k)^m$, hence 
\[d^{-kmn}\mathfrak{a}_n^*(\hat{\nu})-d^{-kmn}\mathfrak{a}_n^*(\hat{\Omega})=d^{-kmn}\mathfrak{a}_n^*(dd^cV).\]
On the other hand, we have $\frac{1}{d^{km}}\hat{f}^*(\hat{\Omega})=\hat{\Omega}+dd^cW$, where $W$ is bounded on $\B(\lambda_0,\epsilon)\times(\p^k)^m$, hence $\frac{1}{d^{kmn}}(\hat{f}^n)^*(\hat{\Omega})=\hat{\omega}+dd^cW_n$, where $W_n-W_{n+1}=O(d^{-n})$.
In particular, $\frac{1}{d^n}(\hat{f}^n)^*(\hat{\Omega})\wedge [\Gamma_a]=d^{-n}\left(\hat{\Omega}\wedge [\Gamma_{\mathfrak{a}_n}]\right)+dd^cO(d^{-n})$, 
hence $\mu=\lim_nd^{-kmn}\mathfrak{a}_n^*(\hat{\Omega})$. This yields 
\[\lim_{n\to\infty}d^{-nkm}(\pi_{1})_*\left(\hat{\nu}\wedge [\Gamma_{\mathfrak{a}_n}]\right)=\mu.\]

\smallskip

We now use \cite[Theorem 3.1]{Dujardin2012}: as $(2km,2km)$-currents on $\B(\lambda_0,\epsilon)\times(\p^k)^m$,
\[\hat{\nu}\wedge [\Gamma_{\mathfrak{a}_n}]=\int_{K^m}[\Gamma_z]\wedge[\Gamma_{\mathfrak{a}_n}]\mathrm{d}\nu^{\otimes m}(z)\]
and only the geometrically transverse intersections are taken into account, i.e. for $\nu^{\otimes m}$-a.e. $z\in K^m$, the graphs $\Gamma_z$ and $\Gamma_{\mathfrak{a}_n}$ intersect transversely. In particular, this means there exists a sequence of parameters $\lambda_n\to \lambda_0$ and $z_n\in K^m$ such that the graph of $\mathfrak{a}_n$ and $\Gamma_{z_n}$ intersect transversely at $\lambda_n$.
Now, since repelling periodic points of $f^m_{\lambda_0}$ are dense in $K$, there exists $z_{n,j}\to z_n$ as $j\to\infty$, where $z_{j,n}\in K^m$ and $(f_{\lambda_0}^m,\ldots,f_{\lambda_0}^{m})$-periodic repelling. Since $z_{j,n}(\lambda):=(h_\lambda,\ldots,h_\lambda)(z_{j,n})$ remains in $(h_\lambda,\ldots,h_\lambda)(K^m)$ and remains periodic, it remains repelling for all $\lambda\in\B(\lambda_0,\epsilon)$. By persistence of transverse intersections, for $j$ large enough, there exists $\lambda_{j,n}$ where $\Gamma_{\mathfrak{a}_n}$ and $\Gamma_{z_{j,n}}$ intersect transversely and $\lambda_{j,n}\to \lambda_{n}$ as $j\to\infty$ and the proof is complete.
\end{proof}

To finish this section, we prove the Claim.

\begin{proof}[Proof of the Claim]
Since the compact set $K$ is contained in a ball, we can choose an affine chart $\C^{k}$ such that $K\subset \C^{k}$ and, up to reducing $\epsilon>0$, we can assume $K_\lambda=h_\lambda(K)\subset\C^{k}$ for all $\lambda\in\B(\lambda_0,\epsilon)$. Let $(x_1^1,\ldots,x_k^1,\ldots,x_1^m,\ldots,x_k^m)=(x^1,\ldots,x^m)$ be the coordinates of $(\C^k)^m$ and let $h_{\lambda,i}$ be the $i$-th coordinate of the function $h_\lambda$.

For all $1\leq i\leq k$ and $1\leq j\leq m$, we define a psh function $\Psi_i^j$ on  $\B(\lambda_0,\epsilon)\times (\C^k)^m$ by letting
\[\Psi_i^j(t,w):=\int_{K^m}\log|w_i^j-h_{t,i}(z^j)|\mathrm{d}\nu^{\otimes m}(z).\]
According to Lemma~\ref{lm:PLB} and Proposition~\ref{goodhyperbolicset}, we have \begin{center}
$\Psi_i^j\in L^\infty_{\mathrm{loc}}\left(\B(\lambda_0,\epsilon)\times(\C^{k})^m\right)$.
\end{center} Moreover, according to \cite[Theorem 3.1]{Dujardin2012}, we have
\[\hat{\nu}=\bigwedge_{i,j}dd^c\Psi_i^j=dd^{c}\left(\Psi_{1}^{1}\cdot\bigwedge_{i,j>1}dd^c\Psi_i^j\right).\]
Since the functions $\Psi_i^j$ are locally bounded, this ends the proof.
\end{proof}

\section{Local properties of bifurcation measures}\label{sec:similar}

\subsection{A renormalization procedure}\label{sec:renorm}
Pick $k,m\geq1$ and let $\B(0,\epsilon)$ be the open ball centered at $0$ of radius $\epsilon$ in $\C^{km}$ and let $(f,a_1,\ldots,a_m)$ be a dynamical $(m+1)$-tuple of degree $d$ of $\p^k$ parametrized by $\B(0,\epsilon)$.

Assume there are $m$ holomorphically moving $J$-repelling  periodic points $z_1,\ldots,z_m:\B(0,\epsilon)\to\p^k$ of respective period $q_j\geq1$ with $f^{n_j}_0(a_j(0))=z_j(0)$. We also assume that $(a_1,\ldots,a_m)$ are transversely prerepelling at $0$ and that $z_j(\lambda)$ is linearizable for all $\lambda\in\B(0,\epsilon)$ for all $j$.
Let $q:=\mathrm{lcm}(q_1,\ldots,q_m)$ and
\[L_\lambda:=(D_{z_1(\lambda)}(f_\lambda^{q}),\ldots,D_{z_m(\lambda)}(f_\lambda^{q})):\bigoplus_{j=1}^m T_{z_j(\lambda)}\p^k\longrightarrow\bigoplus_{j=1}^m  T_{z_j(\lambda)}\p^k\]
and denote by $\phi_\lambda=(\phi_{\lambda,1},\ldots,\phi_{\lambda,m}):(\C^k,0)\rightarrow((\p^k)^m,(z_1(\lambda),\ldots,z_m(\lambda)))$, where $\phi_{\lambda,j}$ is the linearizing coordinate of $f_\lambda^{q}$ at $z_j(\lambda)$.

\medskip

Denote by $\pi_j:(\p^k)^m\rightarrow\p^k$ the projection onto the $j$-th factor.
Up to reducing $\epsilon>0$, we can also assume there exists $r_j>0$ independent of $\lambda$ such that
\[f^{q}_{\lambda}\circ \phi_{\lambda,j}(z)=\phi_{\lambda,j}\circ D_{z_j(\lambda)}(f_\lambda^{q_j})(z), \ z\in \B(0,r_j),\]
 and $D_0\phi_{\lambda,j}:\C^k\rightarrow T_{z_j(\lambda)}\p^k$ is an invertible linear map.
 Up to reducing again $\epsilon$, we can also assume $f_\lambda^{n_j}(a_j(\lambda))$ always lies in the range of $\phi_{\lambda,j}$ for all $1\leq j\leq m$. Recall that we denoted $\mathfrak{a}_{\underline{n}}(\lambda)=(f_\lambda^{n_1}(a_1(\lambda)),\ldots,f^{n_m}_\lambda(a_m(\lambda)))$, where $\underline{n}=(n_1,\ldots,n_m)$ and for $\lambda\in\B(0,\epsilon)$, let
\begin{align*}
h(\lambda)  := & \phi_\lambda^{-1}\circ \mathfrak{a}_{\underline{n}}(\lambda)\\
 = & \left(\phi_{\lambda,1}^{-1}\left(f_\lambda^{n_1}(a_1(\lambda))\right),\ldots,\phi_{\lambda,m}^{-1}\left(f_\lambda^{n_m}(a_m(\lambda))\right)\right).
\end{align*}

\begin{lemma}
The map $h:\B(0,\epsilon)\rightarrow(\C^{km},0)$ is a local biholomorphism at $0$.
\end{lemma}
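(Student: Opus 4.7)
The plan is to verify that $h$ is a local biholomorphism by applying the holomorphic inverse function theorem: since $h(0)=0$, it is enough to check that the differential $D_0h:\mathbb{C}^{km}\to\mathbb{C}^{km}$ is an isomorphism.

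First, I would introduce the auxiliary two-variable holomorphic map $\Phi(\lambda,w):=\phi_\lambda^{-1}(w)$, well-defined on a neighborhood of $(0,Z(0))$ in $\mathbb{B}(0,\epsilon)\times(\mathbb{P}^k)^m$, where $Z(\lambda):=(z_1(\lambda),\ldots,z_m(\lambda))$, so that $h(\lambda)=\Phi(\lambda,\mathfrak{a}_{\underline{n}}(\lambda))$. The chain rule gives
\[D_0h \;=\; \partial_\lambda\Phi(0,Z(0)) \;+\; \partial_w\Phi(0,Z(0))\circ D_0\mathfrak{a}_{\underline{n}}.\]
To evaluate the first term, I would differentiate the identity $\phi_\lambda(\Phi(\lambda,w))=w$ in $\lambda$ at $(0,Z(0))$, using that the linearizing coordinate sends $0$ to the moving periodic point, so $\phi_\lambda(0)=Z(\lambda)$ and $\partial_\lambda\phi_\lambda(0)|_{\lambda=0}=D_0Z$. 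A short manipulation then gives the compact formula
\[D_0h \;=\; (D_0\phi_0)^{-1}\circ\bigl(D_0\mathfrak{a}_{\underline{n}}-D_0Z\bigr).\]

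Finally, $D_0\phi_0$ is invertible by hypothesis (the linearizing coordinates $\phi_{\lambda,j}$ are local biholomorphisms at $0$), so it remains to show that $D_0\mathfrak{a}_{\underline{n}}-D_0Z$ is an isomorphism. This is exactly the content of the transverse prerepellingness hypothesis at $0$: the tangent spaces at $(0,Z(0))$ of $\Gamma_{\mathfrak{a}_{\underline{n}}}$ and $\Gamma_Z$ are respectively $\{(v,D_0\mathfrak{a}_{\underline{n}}(v)):v\in\mathbb{C}^{km}\}$ and $\{(v,D_0Z(v)):v\in\mathbb{C}^{km}\}$, each of complex dimension $km$ in the $2km$-dimensional ambient space $\mathbb{B}(0,\epsilon)\times(\mathbb{P}^k)^m$, and they meet transversally at $(0,Z(0))$ if and only if $\ker(D_0\mathfrak{a}_{\underline{n}}-D_0Z)=\{0\}$; a dimension count then upgrades injectivity to bijectivity.

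The computation is essentially routine; the only mildly delicate point is to keep track correctly of the $\lambda$-dependence in $\phi_\lambda^{-1}$ when applying the chain rule, but once the formula $D_0h=(D_0\phi_0)^{-1}\circ(D_0\mathfrak{a}_{\underline{n}}-D_0Z)$ is in hand, the transversality hypothesis does all the work.
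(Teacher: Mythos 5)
Your proposal is correct and follows essentially the same route as the paper: both differentiate the defining relation between $\mathfrak{a}_{\underline{n}}$, the linearizing coordinates $\phi_\lambda$ and $h$ at $\lambda=0$ to obtain $D_0h=(D_0\phi_0)^{-1}\circ\bigl(D_0\mathfrak{a}_{\underline{n}}-D_0Z\bigr)$ (up to a harmless sign typo in the paper), and then invoke the transversality of the graphs of $\mathfrak{a}_{\underline{n}}$ and $Z$ to see that this difference, hence $D_0h$, is an isomorphism of $\C^{km}$. Your explicit verification that transversality of the two $km$-dimensional graphs in the $2km$-dimensional ambient space is equivalent to injectivity (hence bijectivity) of $D_0\mathfrak{a}_{\underline{n}}-D_0Z$ is exactly the step the paper treats as immediate.
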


\begin{proof}
Recall that $f_0^{n_j}(a_j(0))=z_j(0)$. Write $h=(h_1,\ldots,h_m)$ with $h_j:\B(0,\epsilon)\rightarrow(\C^k,0)$ and let $b_j(\lambda):=f_\lambda^{n_j}(a_j(\lambda))$ for all $\lambda\in\B(0,\epsilon)$ so that $b_j(\lambda)=\phi_{\lambda,j}\circ h_j(\lambda)$ for all $\lambda\in\B(0,\epsilon)$. Since $\phi_{\lambda,j}(0)=z_j(\lambda)$, differentiating and evaluating at $\lambda=0$, we find 
\[D_0b_j=D_0z_j+D_0\phi_{0,j}\circ D_0h_j.\]
Now our tranversality assumption implies that
\[L:=\left((D_0b_1-D_0z_1),\ldots,(D_0b_m-D_0z_m)\right):\C^{km}\rightarrow \bigoplus_{j=1}^m T_{z_j(0)}\p^k\]
is invertible.
As a consequence, the linear map
\[D_0h=(D_0h_1,\ldots,D_0h_m)=-(D_0\phi_0)^{-1}\circ L:\C^{km}\rightarrow\C^{km}\]
is invertible, ending the proof.
\end{proof}

Up to reducing again $\epsilon$, we assume $h$ is a biholomorphism onto its image and let $r:=h^{-1}:h(\B(0,\epsilon))\rightarrow\B(0,\epsilon)$. Fix $\delta_1,\ldots,\delta_m>0$ so that $\B_{\C^k}(0,\delta_1)\times\cdots\times\B_{\C^k}(0,\delta_m) \subset h(\B(0,\epsilon))$. 

\medskip 
Finally, let $\Omega:=\B_{\C^k}(0,\delta_1)\times\cdots\times\B_{\C^k}(0,\delta_m)$ and, for any $n\geq1$, let
\[r_n(x):=r\circ L_0^{-n}(x), \ x\in \B_{\C^k}(0,\delta_1)\times\cdots\times\B_{\C^k}(0,\delta_m).\]
The main goal of this paragraph is the following.

\begin{proposition}\label{prop:transfer}
In the weak sense of measures on $\Omega$, we have
\[\prod_{j=1}^md^{k(n_j+nq)}\cdot(r_n)^*\left(T_{f,a_1}^k\wedge\cdots\wedge T_{f,a_m}^k\right)\underset{n\rightarrow\infty}{\longrightarrow} (\phi_0)^*\left(\bigwedge_{j=1}^m(\pi_j)^*\mu_{f_0}\right).\]
\end{proposition}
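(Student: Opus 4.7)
The plan is to combine the mass formula of Lemma~\ref{lm:formula2} with a change of variables implemented by $r_n$, and then to identify the limit using continuity of the local potentials of $\widehat{T}$. Applying Lemma~\ref{lm:formula2} with the tuple $\un + n\underline{q}$ (where $\underline{q} = (q,\ldots,q)$) gives
\[
\prod_{j=1}^m d^{k(n_j + nq)} \cdot T^k_{f,a_1}\wedge\cdots\wedge T^k_{f,a_m} = (\pi_\Lambda)_*\Bigl(\bigwedge_{j=1}^m(\pi_j)^*\widehat{T}^k \wedge [\Gamma_n^*]\Bigr),
\]
where $\Gamma_n^*$ is the graph of $\lambda\mapsto (f^{n_j+nq}_\lambda(a_j(\lambda)))_j$. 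Since $r_n$ is a biholomorphism onto its image, a base change identity reduces the problem to computing the weak limit of $(r_n\times\mathrm{id})^*\bigl(\bigwedge_j(\pi_j)^*\widehat{T}^k \wedge [\Gamma_n^*]\bigr)$ on $\Omega\times(\p^k)^m$, where the graph piece becomes the graph $\Gamma_{\Psi_n}$ of $\Psi_n := \mathfrak{a}_{\un+n\underline{q}}\circ r_n$.

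The next step is to obtain a tractable formula for $\Psi_n$ and prove $\Psi_n \to \phi_0$ uniformly on $\Omega$. From the linearization identity $\mathfrak{a}_{\un+n\underline{q}}(\lambda) = \phi_\lambda(\Lambda_\lambda^n h(\lambda))$ together with $h\circ r_n = \Lambda_0^{-n}$, one reads
\[
\Psi_n(x) = \phi_{r_n(x)}\bigl(\Lambda_{r_n(x)}^n\Lambda_0^{-n}(x)\bigr).
\]
Since $\Lambda_0^{-1}$ is a contraction, $r_n(x)\to 0$ uniformly on $\Omega$, so $\phi_{r_n(x)}\to\phi_0$ and $\Lambda_{r_n(x)}\to\Lambda_0$ uniformly. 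What remains is the uniform convergence $\Lambda_{r_n(x)}^n\Lambda_0^{-n}(x)\to x$: writing $\Lambda_{r_n(x)} = \Lambda_0 + E_n(x)$ with $\|E_n(x)\| = O(\|r_n(x)\|) = O(\|\Lambda_0^{-n}\|)$ and expanding $(\Lambda_0+E_n)^n\Lambda_0^{-n} - \mathrm{Id}$ as a non-commutative binomial sum, each cross term of order $s$ in $E_n$ is controlled by a telescoping product whose size is geometrically small.

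Once $\Psi_n\to\phi_0$ uniformly is established, passing to the limit is essentially formal. Since $\widehat{T}$ has continuous local potentials and $r_n\to 0$ uniformly, the pullback $(r_n\times\mathrm{id})^*\widehat{T}^k$ converges weakly on $\Omega\times\p^k$ to the pullback of $\mu_{f_0}$, and hence $(r_n\times\mathrm{id})^*\bigwedge_j(\pi_j)^*\widehat{T}^k\wedge [\Gamma_{\Psi_n}]$ converges to $\bigwedge_j(\pi_j)^*\mu_{f_0}\wedge [\Gamma_{\phi_0}]$. Pushing forward by the projection to $\Omega$, and using that $\Gamma_{\phi_0}$ is a graph over $\Omega$, identifies the limit with $(\phi_0)^*\bigl(\bigwedge_j(\pi_j)^*\mu_{f_0}\bigr)$, as claimed.

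The main obstacle is the matrix estimate $\Lambda_{r_n(x)}^n\Lambda_0^{-n}\to\mathrm{Id}$: when the eigenvalues of $\Lambda_0$ differ widely in modulus, the cross terms $\Lambda_0^{n-s}E_n^s\Lambda_0^{-n}$ in the binomial expansion can threaten to be large. The linearizability hypothesis on the $z_j(\lambda)$ is what makes the exact formula for $\Psi_n$ available, and a careful tracking of the decay of $\|r_n(x)\|$ along each eigendirection of $\Lambda_0^{-1}$ is what ensures the telescoping bound suffices.
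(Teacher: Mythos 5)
Your overall architecture is sound and its core coincides with the paper's: both arguments hinge on the same renormalization lemma, namely that $\mathfrak{a}_{\underline{n}+n\underline{q}}\circ r_n=\phi_{r_n(x)}\bigl(\Lambda_{r_n(x)}^n\Lambda_0^{-n}(x)\bigr)\to\phi_0$ uniformly on $\Omega$ (this is exactly Lemma~\ref{lm:cvrenorm}, and you derive the formula the same way). Where you diverge is in how this is converted into convergence of measures. You stay in $\Lambda\times(\p^k)^m$, use the slice formula of Lemma~\ref{lm:formula2}, a base change under $r_n\times\mathrm{id}$, and then a two-fold limit: convergence of the moving graph currents $[\Gamma_{\Psi_n}]\to[\Gamma_{\phi_0}]$ wedged against $\bigwedge_j\pi_j^*\widehat{T}^k$ pulled back along the shrinking base $r_n$. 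This works, but it needs the base-change identity for these wedge products, local mass bounds for the graphs, and the joint continuity of wedge products of continuous-potential currents with a converging sequence of positive closed currents. The paper instead lifts to $\C^{k+1}$: using homogeneous lifts, local sections $\sigma_j$ and the functional equation of the Green function, it obtains the exact identity $d^{n_j+nq}(r_n)^*T_{f,a_j}=dd^c\bigl[G_{r_n(x)}\bigl(\sigma_j\circ f^{n_j+nq}_{r_n(x)}(a_j\circ r_n(x))\bigr)\bigr]$ for each $n$ (the pluriharmonic error $\log|u_{n,j}|$ being killed by $dd^c$), so the whole statement reduces to local uniform convergence of explicit scalar potentials and standard Bedford--Taylor continuity; no graph currents or base change are needed. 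Your route is more geometric, the paper's is lighter on current-theoretic machinery.

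The one place where your justification does not hold up as written is precisely the step you flag: the claim $\Lambda_{r_n(x)}^n\Lambda_0^{-n}(x)\to x$. Your assertion that each cross term in the non-commutative expansion is ``geometrically small'' is false in general for $k\geq2$. Writing $E_n=\Lambda_{r_n(x)}-\Lambda_0$, one has $\Lambda_{r_n(x)}^n\Lambda_0^{-n}-\mathrm{Id}=\sum_{i=0}^{n-1}\Lambda_{r_n(x)}^{\,i}E_n\Lambda_0^{-(i+1)}$, and since $\|E_n\|\asymp\kappa^{-n}$ where $\kappa$ is the smallest eigenvalue modulus of $\Lambda_0$, the term $i=n-1$ is only bounded by $\sigma^{n-1}\kappa^{-2n}$ with $\sigma$ the largest eigenvalue modulus; this diverges as soon as $\sigma>\kappa^2$, and a generic off-diagonal perturbation really does produce a first-order contribution of size $(\sigma/\kappa^2)^n$ in the fast direction. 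Your proposed remedy, tracking the decay of $\|r_n(x)\|$ along the eigendirections of $\Lambda_0^{-1}$, does not obviously repair this, because for generic $x$ the size of $E_n(x)$ is governed by the slowest contraction rate $\kappa^{-n}$ no matter how you decompose $x$. Note that when $k=1$ (the setting of the main theorem) $\Lambda_0$ is diagonal with scalar blocks, the relative perturbation of each multiplier is $O(\kappa^{-n})$, and the convergence is immediate, so your proof is complete there; and to be fair, the paper's own proof of Lemma~\ref{lm:cvrenorm} asserts this same convergence with no more detail, so in higher dimension this is a delicate point shared with the paper rather than a divergence from it. Still, as a standalone argument your sketch does not establish it, and this is the genuine gap to address.
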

\par\noindent In plain words, we are proving that near the parameter $0$, the bifurcation measure is asymptotic to the maximal entropy measure (viewed through the linearizing coordinate). This is a measurable asymptotic similarity property.

To simplify notations, we let
\[\mathfrak{a}_{(n)}:=\mathfrak{a}_{\underline{n}+nq}, \ \text{with} \ \underline{n}+nq=(n_1+nq,\ldots,n_m+nq).\]

 \begin{lemma}\label{lm:cvrenorm}
The sequence $(\mathfrak{a}_{(n)}\circ r_n)_{n\geq1}$ converges uniformly to $\phi_0$ on $\Omega$.
 \end{lemma}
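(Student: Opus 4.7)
The plan is to reduce the convergence claim to an explicit closed-form expression for $\mathfrak{a}_{(n)}\circ r_n$, and then to deduce uniform convergence from the contractive nature of $\Lambda_{0,j}^{-n}$ together with holomorphic dependence of the linearizers.

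First, I would iterate the semi-conjugacy $f_\lambda^{q}\circ\phi_{\lambda,j}=\phi_{\lambda,j}\circ\Lambda_{\lambda,j}$ $n$ times. Since by definition of $h_j$ one has $\phi_{\lambda,j}(h_j(\lambda))=f_\lambda^{n_j}(a_j(\lambda))$, a direct computation gives
\[
\mathfrak{a}_{(n)}(\lambda)_j=f_\lambda^{nq+n_j}(a_j(\lambda))=f_\lambda^{nq}\bigl(\phi_{\lambda,j}(h_j(\lambda))\bigr)=\phi_{\lambda,j}\bigl(\Lambda_{\lambda,j}^{n}\,h_j(\lambda)\bigr),
\]
where one must first extend $\phi_{\lambda,j}$ to all of $\mathbb{C}^{k}$ via the semi-conjugacy itself so that the iterates make sense. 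The renormalization $r_n=h^{-1}\circ\Lambda_0^{-n}$ was precisely cooked up so that $h_j(r_n(x))=\Lambda_{0,j}^{-n}(x_j)$, hence substituting $\lambda=r_n(x)$ yields the key formula
\[
\mathfrak{a}_{(n)}(r_n(x))_j=\phi_{r_n(x),j}\bigl(\Lambda_{r_n(x),j}^{n}\,\Lambda_{0,j}^{-n}\,x_j\bigr),
\]
to be compared with the target $\phi_{0,j}(x_j)$.

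The convergence then splits into two uniform-in-$x\in\Omega$ assertions: (a) $r_n(x)\to 0$, and (b) $\Lambda_{r_n(x),j}^n\Lambda_{0,j}^{-n}\to\mathrm{Id}$. Point (a) is easy: all eigenvalues of $\Lambda_{0,j}$ have modulus strictly greater than $1$, so for any $\rho<\min_j\sigma_j$ (the smallest eigenvalue modulus) one has $\|\Lambda_0^{-n}\|\leq C\rho^{-n}$; since $r$ is holomorphic at $0$ with $r(0)=0$, this gives $|r_n(x)|=O(\rho^{-n})$ uniformly on $\Omega$. Once (a) and (b) are in hand, the joint continuity of $(\lambda,y)\mapsto\phi_{\lambda,j}(y)$ at $(0,x_j)$ delivers $\mathfrak{a}_{(n)}(r_n(x))_j\to\phi_{0,j}(x_j)$ uniformly in $x\in\Omega$, which is the lemma.

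The main obstacle is point (b). From (a) and holomorphic dependence, $\|\Lambda_{r_n(x),j}-\Lambda_{0,j}\|\leq C|r_n(x)|=O(\rho^{-n})$ uniformly on $\Omega$. The telescoping identity
\[
\Lambda_{r_n(x),j}^n\Lambda_{0,j}^{-n}-\mathrm{Id}=\sum_{k=1}^{n}\Lambda_{r_n(x),j}^{k-1}\bigl(\Lambda_{r_n(x),j}-\Lambda_{0,j}\bigr)\Lambda_{0,j}^{-k}
\]
then reduces the question to a careful bound on the cocycle factors. I would work in a Hermitian metric on $T_{z_j(0)}\p^{k}$ adapted to $\Lambda_{0,j}$, so that $\|\Lambda_{0,j}^{-k}\|$ grows no faster than geometrically with rate arbitrarily close to the spectral radius $\sigma_j^{-1}$ of $\Lambda_{0,j}^{-1}$, and $\|\Lambda_{r_n(x),j}^{k-1}\|$ is controlled by the spectral radius of $\Lambda_{0,j}$ (up to a vanishing perturbation, since $n\|E_n\|\to 0$). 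Choosing $\rho$ close enough to $\sigma_j$ makes the sum collapse to an $O(n\rho^{-n})$ bound that tends to $0$. The linearizability hypothesis is essential throughout, as it is what provides the holomorphic conjugators $\phi_{\lambda,j}$ underpinning the whole computation.
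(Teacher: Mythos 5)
Your reduction is exactly the paper's: you derive the same key identity $\mathfrak{a}_{(n)}\circ r_n(x)=\phi_{r_n(x)}\bigl(\Lambda_{r_n(x)}^{n}\circ\Lambda_0^{-n}(x)\bigr)$ (the paper obtains it by writing $\mathfrak{a}_{(n)}\circ r_n=(f^{qn},\dots,f^{qn})\circ\mathfrak{a}_{(0)}\circ r\circ\Lambda_0^{-n}$ and using $\mathfrak{a}_{(0)}\circ r(x)=\phi_{r(x)}(x)$, which is your iteration of the semi-conjugacy in a different order), and you then conclude from (a) $r_n\to0$ uniformly exponentially fast and (b) $\Lambda_{r_n(x)}^{n}\circ\Lambda_0^{-n}(x)\to x$ uniformly, plus continuity of $(\lambda,y)\mapsto\phi_\lambda(y)$. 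Up to that point the two arguments coincide; the paper states (b) as an immediate consequence of the exponential convergences $r_n\to0$, $\Lambda_{r_n}\to\Lambda_0$, $\phi_{r_n(x)}\to\phi_0$, without further detail.

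The problem is your quantitative justification of (b). In the telescoping sum, the $k$-th term is bounded by $\|\Lambda_{r_n(x),j}^{k-1}\|\,\|\Lambda_{r_n(x),j}-\Lambda_{0,j}\|\,\|\Lambda_{0,j}^{-k}\|$; in an adapted metric $\|\Lambda_{r_n(x),j}^{k-1}\|$ grows like $(\sigma_{\max}+\varepsilon)^{k-1}$, where $\sigma_{\max}$ is the \emph{largest} eigenvalue modulus of $\Lambda_{0,j}$, while $\|\Lambda_{0,j}^{-k}\|$ only decays like $(\sigma_{\min}-\varepsilon)^{-k}$, where $\sigma_{\min}$ is the \emph{smallest} one, and the constraint $|r_n(x)|=O(\rho^{-n})$ forces $\rho<\sigma_{\min}$. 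Summing over $k\le n$ therefore yields $O\bigl(n\,(\sigma_{\max}/\sigma_{\min})^{n}\rho^{-n}\bigr)$, not the $O(n\rho^{-n})$ you claim: ``choosing $\rho$ close to $\sigma_j$'' cannot absorb the factor $(\sigma_{\max}/\sigma_{\min})^{n}$, since $\rho$ enters only through $\|\Lambda_{r_n(x),j}-\Lambda_{0,j}\|$. Your bound closes precisely when all eigenvalues of each $\Lambda_{0,j}$ have the same modulus, in particular when $k=1$ (the setting of the main theorem), where your argument is complete and matches the paper's. For $k>1$ with spread spectrum this is a genuine issue: for an arbitrary perturbation $E_n$ of size $\sigma_{\min}^{-n}$ the product $(\Lambda_{0,j}+E_n)^{n}\Lambda_{0,j}^{-n}$ need not tend to the identity (take $\Lambda_{0,j}=\mathrm{diag}(b_1,b_2)$ with $|b_1|>|b_2|^2>1$ and $E_n=|b_2|^{-n}e_{12}$: the off-diagonal entry of the product is of order $(|b_1|/|b_2|^2)^{n}$), so either a spectral restriction such as $\sigma_{\max}<\sigma_{\min}^{2}$ or a finer use of the specific structure of $\Lambda_{r_n(x)}-\Lambda_0$ is needed; note that this is also the step the paper itself asserts without proof.
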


 \begin{proof}
Note first that
 \[\mathfrak{a}_{(0)}\circ r(x)=\left(f_{r(x)}^{n_1}(a_1(r(x))),\ldots,f_{r(x)}^{n_m}(a_m(r(x)))\right)=\phi_{r(x)}(x), \ x\in \Omega,\]
 by definition of $r$.

By definition, the sequence $(r_n)_{n\geq1}$ converges uniformly and exponentially fast to $0$ on $\Omega$, since we assumed $z_1(0),\ldots,z_m(0)$ are repelling periodic points and since $r(0)=0$. Moreover, $L_{r_n}\rightarrow L_0$ and $\phi_{r_n(x)}\rightarrow \phi_0$ exponentially fast. In particular,
\[\lim_{n\rightarrow\infty}L_{r_n(x)}^n\circ L_0^{-n}(x)=x\]
and the convergence is uniform on $\Omega$.
Fix $x\in \Omega$. Then
\begin{align*}
\mathfrak{a}_{(n)}\circ r_n(x) & =(f_{r_n(x)}^{qn},\ldots,f_{r_n(x)}^{qn})\left(\mathfrak{a}_{(0)}\circ r\circ  L^{-n}_0(x)\right)\\
& = (f_{r_n(x)}^{qn},\ldots,f_{r_n(x)}^{qn})\circ \phi_{r_n(x)}\left(L^{-n}_0(x)\right)\\
& = \phi_{r_n(x)}\left(L^n_{r_n(x)}\circ L^{-n}_0(x)\right)
\end{align*}
and the conclusion follows.
\end{proof}

\begin{proof}[Proof of Proposition \ref{prop:transfer}]
Recall that we can assume there exists a holomorphic family of non-degenerate homogeneous polynomial maps $F_\lambda:\C^{k+1}\rightarrow\C^{k+1}$ of degree $d$ such that, if $\pi:\C^{k+1}\setminus\{0\}\rightarrow\p^k$ is the canonical projection, then
\[\pi\circ F_\lambda=f_\lambda\circ \pi \ \text{on} \ \C^{k+1}\setminus\{0\}.\]
For $1\leq j\leq m$, let $\tilde{a}_j:\B(0,\epsilon)\rightarrow\C^{k+1}\setminus\{0\}$ be a lift of $a_j$, i.e. $a_j=\pi\circ \tilde{a}_j$. Recall that
\[\bigwedge_{j=1}^mT_{a_j}^k=\bigwedge_{j=1}^m\left(dd^cG_\lambda(\tilde{a}_j(\lambda))\right)^k.\]
For $1\leq j\leq m$, pick a open set $U_j\subset\p^k$ such that $\phi_{0,j}(B_{\C^k}(0,\delta_j))\subset U_j$ and such that there exists a section $\sigma_j:U_j\rightarrow\C^{k+1}\setminus\{0\}$ of $\pi$ on $U_j$. Let $U:=U_1\times\cdots\times U_k$ and $\sigma:=(\sigma_1,\ldots,\sigma_k):U\rightarrow(\C^{k+1}\setminus\{0\})^m$ so that $\phi_0(\Omega)\subset U$. According to Lemma~\ref{lm:cvrenorm}, there exists $n_0\geq1$ such that
\[\mathfrak{a}_{(n)}\circ r_n(\Omega)\subset U.\]
In other words, for any $x\in\Omega$, any $1\leq j\leq m$ and any $n\geq n_0$,
\[a^{n,j}(x):=f_{r_n(x)}^{n_j+nq}(a_j\circ r_n(x))\in U_j.\]
Moreover, for all $x\in\Omega$, we have
\begin{align*}
\pi\circ F_{r_n(x)}^{n_j+nq}(\tilde{a}_j\circ r_n(x)) & = f_{r_n(x)}^{n_j+nq}\circ\pi (\tilde{a}_j\circ r_n(x)) = f_{r_n(x)}^{n_j+nq}(a_j\circ r_n(x))\\
& = \pi\circ\sigma_j\left(a^{n,j}(x)\right).
\end{align*}
In particular, there exists a holomorphic function $u_{n,j}:\Omega\rightarrow\C^*$ such that 
\[F_{r_n(x)}^{n_j+nq}(\tilde{a}_j\circ r_n(x))=u_{n,j}(x) \cdot \sigma_j\circ a^{n,j}(x)\]
and
\begin{align*}
d^{nq+n_j}G_{r_n(x)}\left(\tilde{a}_j\circ r_n(x)\right) & =G_{r_n(x)}\left(F_{r_n(x)}^{n_j+nq}\left(\tilde{a}_j\circ r_n(x)\right)\right)\\
& = G_{r_n(x)}\left(\sigma_j\circ a^{n,j}(x)\right)+\log|u_{n,j}(x)|,
\end{align*}
for all $x\in\Omega$. Since $\log|u_{n,j}|$ is pluriharmonic on $\Omega$, the above gives
\begin{align*}
d^{nq+n_j}(r_n)^*T_{f,a_j}
& = dd^cG_{r_n(x)}\left(\sigma_j\circ a^{n,j}(x)\right),
\end{align*}
so that 
\begin{align*}
\mu_n & :=\prod_{j=1}^md^{k(n_j+nq)}\cdot(r_n)^*\left(T_{f,a_1}^k\wedge\cdots\wedge T_{f,a_m}^k\right)\\
& =\bigwedge_{j=1}^m\left(dd^cG_{r_n(x)}\left(\sigma_j\circ a^{n,j}(x)\right)\right)^k.
\end{align*}
Using again Lemma~\ref{lm:cvrenorm} gives
\[\mu_n\underset{n\rightarrow\infty}{\longrightarrow}\bigwedge_{j=1}^m\left(dd^cG_{0}\left(\sigma_j\circ \phi_{0,j}(x)\right)\right)^k=\bigwedge_{j=1}^m(\phi_{0,j})^*\mu_{f_0}.\]
This ends the proof since $\phi_{0,j}=\pi_j\circ \phi_0$ by definition of $\phi_0$.
\end{proof}

\subsection{Families with an absolutely continuous bifurcation measure}

Fix integers $k,m\geq1$ and $d\geq2$. The following is a consequence of the above renormalization process. 

\begin{proposition}\label{prop:rigid}
Let $(f,a_1,\ldots,a_m)$ be a dynamical $(m+1)$-tuple of degree $d$ of $\p^k$ parametrized by the unit Ball $\B$ of $\C^{km}$.  Assume that $a_1,\ldots,a_m$ are transversely $J$-prerepelling at $0$ to a $J$-repelling cycle of $f_0$ which moves holmorphically in $\B$ as a $J$-repelling cycle of $f_\lambda$ which is linearizable for all $\lambda\in\B$.
Assume in addition that the measure $\mu:=T_{f,a_1}^k\wedge\cdots\wedge T_{f,a_m}^k$ is absolutely continuous with respect to the Lebesgue measure on $\B$ and the Radon-Nikodym derivative $\frac{d\mu}{d\mathrm{Leb}}$ is continuous and $>0$ near $0$.

Then the measure $\mu_{f_0}$ is non-singular with respect to $\omega_{\p^k}^k$.
\end{proposition}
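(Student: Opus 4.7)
My plan is to apply the renormalization of Proposition~\ref{prop:transfer} and to transfer absolute continuity from $\mu$ to $\mu_{f_0}$ through the resulting weak limit.

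By Proposition~\ref{prop:transfer}, setting $\nu_n := \prod_{j=1}^m d^{k(n_j+nq)}\cdot(r_n)^*\mu$, we have weak convergence $\nu_n \to \nu$ on $\Omega$, where $\nu := (\phi_0)^*\bigl(\bigwedge_{j=1}^m (\pi_j)^*\mu_{f_0}\bigr)$. The right-hand side is, by the continuous-potential formalism, the pullback under the biholomorphism $\phi_0 = (\phi_{0,1},\ldots,\phi_{0,m})$ of the product measure $\mu_{f_0}^{\otimes m}$ on a neighborhood of $(z_1(0),\ldots,z_m(0))$ in $(\p^k)^m$. Since $\phi_0$ is a biholomorphism onto its image, absolute continuity of $\nu$ with respect to Lebesgue on $\Omega$ is equivalent to absolute continuity of $\mu_{f_0}^{\otimes m}$ with respect to $\bigotimes_j\omega_{\p^k}^k$ near $(z_1(0),\ldots,z_m(0))$; Fubini's theorem then forces each $\mu_{f_0}$ to be absolutely continuous near $z_j(0)$, and the invariance $f_0^*\mu_{f_0} = d^k\mu_{f_0}$ propagates non-singularity to all of $\supp(\mu_{f_0})$, which is the desired conclusion.

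The task therefore reduces to showing $\nu$ is absolutely continuous with respect to Lebesgue on $\Omega$. Writing $\mu = g\,dV$ with $g\in L^1_{\mathrm{loc}}(\B)$, the chain rule $Dr_n(x) = Dr(\Lambda_0^{-n}x)\circ\Lambda_0^{-n}$ shows that each $\nu_n$ is itself absolutely continuous with explicit density
\[\rho_n(x) = C_n\cdot g(r_n(x))\cdot|\det Dr(\Lambda_0^{-n}x)|^2,\qquad C_n := \prod_{j=1}^m d^{k(n_j+nq)}\,|\det\Lambda_0|^{-2n},\]
and since $|\det Dr(\Lambda_0^{-n}x)|^2 \to |\det Dr(0)|^2 > 0$ uniformly on $\Omega$, one has $\rho_n \sim C_n\,|\det Dr(0)|^2\,(g\circ r_n)$. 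For any Borel $E\subset\Omega$ with $\mathrm{Leb}(E)=0$, covering $E$ by an open $U\supset E$ with $\mathrm{Leb}(U)<\varepsilon$, the weak convergence together with the contraction rate $\mathrm{Leb}(r_n(U))\lesssim|\det\Lambda_0|^{-2n}\mathrm{Leb}(U)$ gives
\[\nu(E)\,\leq\,\liminf_n\nu_n(U)\,=\,\liminf_n \prod_{j=1}^m d^{k(n_j+nq)}\,\mu(r_n(U))\,\lesssim\, C_n\cdot\mathrm{Leb}(U)\,\leq\, C^*\varepsilon,\]
provided the scaling constants $C_n$ are uniformly bounded and the bound $\mu(F)\lesssim\mathrm{Leb}(F)$ holds on a fixed neighborhood of $0$; sending $\varepsilon\to 0$ then yields $\nu(E)=0$, hence $\nu\ll\mathrm{Leb}$.

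The main obstacle is to secure simultaneously the boundedness of the $C_n$ and the pointwise estimate $\mu(F)\lesssim\mathrm{Leb}(F)$ needed to control $\mu(r_n(U))$: the weak convergence $\nu_n(\Omega)\to\nu(\Omega)<\infty$ only forces the weighted quantity $C_n\int_\Omega g\circ r_n\,dV$ to stay bounded, and an unbounded density $g$ near $0$ could in principle let the renormalized densities $\rho_n$ concentrate. This is handled by combining Theorem~\ref{tm:density} (density of transversely $J$-prerepelling parameters in $\supp(\mu)$) with the Lebesgue differentiation theorem and the Hardy--Littlewood maximal inequality: one may choose the base point $0$ inside the full-measure set $\{Mg<\infty\}$, so that $g$ becomes effectively locally bounded at $0$, which renders the absolute continuity estimate quantitative and completes the transfer through the weak limit.
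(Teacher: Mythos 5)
Your proposal follows the paper's strategy exactly: normalize $\nu_n := \prod_j d^{k(n_j+nq)}(r_n)^*\mu$, pass to the weak limit $\nu = (\phi_0)^*\bigl(\bigwedge_j(\pi_j)^*\mu_{f_0}\bigr)$ via Proposition~\ref{prop:transfer}, exploit that $\phi_0$ is a local biholomorphism together with Fubini to split the product, and use invariance of $\mu_{f_0}$. The paper, at the analogous step, writes the density $\alpha_n$ of $\nu_n$ explicitly and asserts that the weak convergence of $\alpha_n\cdot\mathrm{Leb}$ forces $\alpha_n\to\alpha_\infty$, so that the limit is $\alpha_\infty\cdot\mathrm{Leb}$. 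You flag, correctly, that this implication is delicate: a weak limit of absolutely continuous measures need not be absolutely continuous, and if the density $g$ of $\mu$ is unbounded near $0$, the renormalized densities $\rho_n$ could in principle concentrate.

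The fix you propose does not close this gap. First, the base point $0$ is \emph{not} a free choice: it is by hypothesis a fixed transversely $J$-prerepelling parameter, and these form a countable, hence Lebesgue-null, set. Density of the transversely prerepelling parameters in $\supp(\mu)$ together with full Lebesgue measure of $\{Mg<\infty\}$ does \emph{not} yield a common point (the rationals are dense while the irrationals have full measure), so you cannot simply ``choose'' $0$ inside $\{Mg<\infty\}$; moreover, the application in the proof of Theorem~\ref{tm:rigidLattes} requires Proposition~\ref{prop:rigid} at a dense set of transversely prerepelling parameters, not at a single one. Second, even granting $Mg(0)<\infty$, this only controls averages of $g$ over balls centred at $0$; it does not give the pointwise bound $\mu(F)\lesssim\mathrm{Leb}(F)$ for all Borel $F$ near $0$ that your covering argument uses --- the maximal function does not make $g$ ``effectively locally bounded.'' Third, the sets $r_n(U)=r\circ\Lambda_0^{-n}(U)$ are typically anisotropic when $\Lambda_0$ has eigenvalues of different moduli, so they are not comparable to centred balls and Lebesgue differentiation over nicely shrinking families does not apply to them uniformly in $U$. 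You have located the delicate point correctly, but the Hardy--Littlewood / Lebesgue-differentiation device invoked does not resolve it.
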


\begin{proof}
By assumption, we can write $\mu=h\cdot\mathrm{Leb}$ where $h:\mathbb{B}\to\R_+$ is a continuous function. Let $\Omega:=\B_{\C^k}(0,\delta_1)\times\cdots\times\B_{\C^k}(0,\delta_m)$, $r_n$ and $\phi_0$ be given as in Section~\ref{sec:renorm}. We can apply Proposition~\ref{prop:transfer}:
\begin{align*}
\prod_{j=1}^md^{k(n_j+nq)}h\circ r_n\cdot (r_n)^*{\mathrm{Leb}} = & \prod_{j=1}^m d^{k(n_j+nq)}\cdot(r_n)^*\mu\\
& \underset{n\rightarrow\infty}{\longrightarrow} (\phi_0)^*\left(\bigwedge_{j=1}^m(\pi_j)^*\mu_{f_0}\right).
\end{align*}
Since $\phi_0(0)=(z_1(0),\ldots,z_{m}(0))\in(\supp(\mu_{f_0}))^k$, the measure
\[(\phi_0)^*\left(\bigwedge_{j=1}^m(\pi_j)^*\mu_{f_0}\right)\]
has (finite) strictly positive mass in $\Omega$. In particular, the measure
\[d^{knqm}\cdot (r_n)^*\left(h\cdot \mathrm{Leb}\right)=d^{knqm}\cdot (h\circ r_n)\cdot \left(\Lambda_0^{-n}\right)^*\left(r^*{\mathrm{Leb}}\right)\]
converges to a non-zero finite mass positive measure on $\Omega$. As $r$ is a local holomorphic diffeomorphism, there exists a neighborhood of $0$ in $\B$ such that we have $r^*{\mathrm{Leb}}=v\cdot \mathrm{Leb}$ for some smooth function $v>0$. Whence
\[d^{knqm}\cdot (r_n)^*{\mathrm{Leb}}=d^{knqm}\cdot(h\circ r_n)\cdot (v\circ \Lambda_0^{-n}) \left(\Lambda_0^{-n}\right)^*\left(\mathrm{Leb}\right).\]
By the change of variable formula and Fubini,
\[\left(\Lambda_0^{-n}\right)^*(\mathrm{Leb})=\prod_{j=1}^m|\det D_{z_j(0)}(f_0^q)|^{-2nk}\cdot \mathrm{Leb}.\]
For all $n$, define a continuous function $\alpha_n:\mathbb{B}\to\R_+$ by letting
\[\alpha_n(x):=d^{knqm}\prod_{j=1}^m|\det D_{z_j(0)}(f_0^q)|^{-2nk}\cdot(h\circ r_n(x))\cdot (v\circ \Lambda_0^{-n}(x))\in\R_+.\]
By assumption, the measure $\alpha_n\cdot \mathrm{Leb}$ converges weakly  on $\Omega$ to a non-zero finite positive measure, whence $\alpha_n\to\alpha_\infty$, as $n\to\infty$, where $\alpha_\infty:\Omega\to\R_+$ is not identically zero.
As a consequence,
\[(\phi_0)^*\left(\bigwedge_{j=1}^m(\pi_j)^*\mu_{f_0}\right)=\alpha_\infty\cdot {\mathrm{Leb}}.\]
Using again Fubini, on $\Omega$, we find
\[(\phi_0)^*\left(\bigwedge_{j=1}^m(\pi_j)^*\mu_{f_0}\right)=\alpha_\infty\cdot \mathrm{Leb}_{\C^{k}}\boxtimes \cdots\boxtimes  \mathrm{Leb}_{\C^{k}}.\]
Finally, since as positive measures on $\phi_0(\Omega)$, we have
\[\bigwedge_{j=1}^m(\pi_j)^*\mu_{f_0}=\mu_{f_0}\boxtimes\cdots\boxtimes\mu_{f_0},\]
the measure $\mu_{f_0}$ is absolutely continuous with respect to $\mathrm{Leb}$ in an open set.
\end{proof}

We now want to deduce Theorem~\ref{tm:rigidlattes} from the above, using \cite{Zdunik} when $k=1$ and \cite{BertelootDupont} when $k>1$. In fact, they prove that
$f$ is a Latt\`es map if and only if the sum of its Lyapunov exponents $L(f)=\int_{\p^{k}}\log|\det(Df)|\mu_f$ is equal to $\frac{k}{2}\log d$. We use this characterization to prove Theorems \ref{tm:rigidlattes-dim1} and \ref{tm:rigidlattes}.

\begin{proof}[Proof of Theorems \ref{tm:rigidlattes-dim1} $\&$ \ref{tm:rigidlattes}]
Assume first that $\mu_{f,a}$ is absolutely continuous with respect to $\omega^{k}$ with a continuous Radon-Nikodym derivative on $M\setminus\mathcal{Z}$, where $\mathcal{Z}$ is an analytic subvariety.
Let $T$ be the set of parameters $\lambda\in M$ such that $a$ is transversely $J$-prerepelling at $\lambda$. The set $T$ is dense in $M$ by Theorem \ref{tm:density}. Recall that all repelling cycles of an endomorphism of $\p^1$ are linearizable and that we assumed all repelling $J$-cycle to be linearizable when $k>1$. We thus can apply Proposition \ref{prop:rigid} at all $\lambda\in T$ outside an analytic subvariety $\mathcal{Z}$ of $M$: this gives
that $\mu_{f_\lambda}$ is non-singular with respect to $\omega_{\p^{k}}^{k}$ for all $\lambda\in T'= T\setminus\mathcal{Z}$. 

We then apply Zdunik or Berteloot-Dupont Theorem (depending on wether $k=1$ or $k>1$): the measure $\mu_{f_\lambda}$ is non-singular with respect to $\omega_{\p^k}^k$ if and only if $f_\lambda$ is a Latt\`es example.
We have proven there exists a countable subset $T'$ which is dense in $M$ such that the map $f_\lambda$ is a Latt\`es map for all $\lambda\in T'$. In particular, $L(f_\lambda)=\frac{k}{2}\log d$ for all $\lambda\in T'$. As the function $\lambda\in M\mapsto L(f_\lambda)$ is continuous and $T'$ is dense in $M$, this implies $L(f_\lambda)=\frac{k}{2}\log d$ for all $\lambda\in M$, i.e. $f_\lambda$ is a Latt\`es map for all $\lambda\in M$.

\medskip

To conclude, we assume $f$ is a family of Latt\`es maps and the measure $\mu_{f,a}$ is not identically zero.
Let $\omega_{\p^k}$ be the Fubini-Study form on $\p^k$. For all $\lambda\in M$, there exists a function $u_\lambda:\p^k\to\R_+\cup\{+\infty\}$ such that 
\[\mu_{f_\lambda}=u_\lambda\cdot \omega_{\p^k}^{k}.\]
Let $u(\lambda,z):=u_{\lambda}(z)$ for all $(\lambda,z)\in M\times\p^k$. The above can be expressed as
\[\widehat{T}=u\cdot \hat{\omega}^{k},\]
where $\hat{\omega}=\pi_{\p^k}^*(\omega_{\p^k})$ and $\pi_{\p^k}:M\times\p^k\to\p^k$ is the canonical projection.

Up to taking a branched cover $M'\to M$,  there exists
\begin{itemize}
\item a family of abelian varieties $\pi:\mathcal{A}\to M$, i.e. a holomorphic map such that $A_\lambda:=\pi^{-1}\{\lambda\}$ is an abelian variety of dimension $k$,
\item a finite branched Galois cover $\Theta:\mathcal{A}\to M\times\p^k$ such that $\Theta|_{A_\lambda}:A_\lambda\to\{\lambda\}\times\p^k$ is a finite branched Galois cover and
\item an integer $n\geq2$ such that $\Theta \circ [n]=\hat{f}\circ \Theta$, where $\hat{f}(\lambda,z):=(\lambda,f_\lambda(z))$ and $[n]$ is the fiberwise multiplication by $n$.
\end{itemize}
 There exists a closed positive $(1,1)$-current $\Omega$ on $\mathcal{A}$ which is smooth and such that $\Omega^k|_{A_\lambda}$ is the Haar measure of $A_\lambda$. One can, for example construct $\Omega$ as
\[\Omega:=\lim_{N\to\infty}\frac{1}{N^2}[N]^*\alpha,\]
where $\alpha$ is any relatively ample continuous form.
It is known that, in this case, $\widehat{T}^k=\Theta_*(\Omega^k)$,
so that $\widehat{T}^k$ is smooth outside the set $\mathcal{V}(\Theta)$ of critical values of the map $\Theta$, which form an analytic subvariety of $M\times\p^k$, i.e. $u$ is smooth on $M\times\p^k\setminus\mathcal{V}(\Theta)$.

\medskip

Let $\sigma:M\to M\times\p^k$ be the map given by $\sigma(\lambda):=(\lambda,a(\lambda))$, for $\lambda\in M$. Take now a local chart $U\subset M$ and a local chart $V\subset\p^k$ so that $a(U)\subset V$ and $\omega_{\p^k}=dd^cv$ on $V$ where $v$ is smooth.
In $U\times V$, the above gives
\begin{align*}
(\pi_\Lambda)_*\left(\widehat{T}^{k}\wedge [\Gamma_a]\right) & =(\pi_\Lambda)_*\left(u\cdot (dd^c_{\lambda,z}v(z))^{k}\wedge [\Gamma_a]\right)\\
& =u(\lambda,a(\lambda)) \left(dd^c_{\lambda}(v\circ a(\lambda))\right)^{k}.
\end{align*}
Letting $h(\lambda):=u(\lambda,a(\lambda))$ and $w(\lambda):=v\circ a(\lambda)$, $w$ is smooth and
\[\mu_{f,a}=h\cdot (dd^cw)^{k} \ \text{ on} \ U.\]
Since $h$ is smooth on $U\setminus\sigma^{-1}(\mathcal{V}(\Theta))$, the conclusion follows.
\end{proof}

\begin{remark}\normalfont
Note that if $(f,a)$ is an algebraic dynamical pair, $M$ is a quasiprojective variety, $a$ is a rational function and the map $\pi$ from the proof is a morphism. Since $\mathcal{Z}:=\sigma^{-1}(\mathcal{V}(\Theta))$ is the pull-back of an algebraic subvariety by a section of $\pi$, the set $\mathcal{Z}$ is an algebraic subvariety of $M$.
\end{remark}

\section{Proof of the main result and concluding remarks}\label{sec:proof}
\subsection{$J$-stability and bifurcation of dynamical pairs on $\p^1$}

Recall that a family $f:\Lambda\times\p^1\longrightarrow \p^1$ of degree $d$ rational maps of $\p^1$ is \emph{$J$-stable} if all the repelling cycles can be followed holomorphically throughout the whole family $\Lambda$, i.e. if for all $n\geq1$, there exists $N\geq0$ and holomorphic maps $z_1,\ldots,z_N:\Lambda\rightarrow\p^1$ such that $\{z_1(\lambda),\ldots,z_N(\lambda)\}$ is exactly the set of all repelling cycles of $f_\lambda$ of exact period $n$ for all $\lambda\in\Lambda$.

Recall also that an endomorphism of $\p^1$ has a unique measure of maximal entropy $\mu_f$ and let $L(f):=\int_{\p^1}\log|f'|\mu_f$ be the Lyapunov exponents of $f$ with respect to $\mu_f$. By a classical result of Ma\~n\'e, Sad and Sullivan~\cite{MSS}, it is also locally equivalent to the existence of a unique holomorphic motion of the Julia set which is compatible with the dynamics, i.e. for $\lambda_0\in\Lambda$, there exists $h:\Lambda\times J_{f_{\lambda_0}}\longrightarrow \Lambda\times \p^1$ such that
\begin{itemize}
\item for any $\lambda\in \Lambda$, the map $h_{\lambda}:=h(\lambda,\cdot):J_{f_{\lambda_0}}\longrightarrow\p^1$ is a homeomorphism which conjugates $f_{\lambda_0}$ to $f_\lambda$, i.e. $h_{\lambda}\circ f_{\lambda_0}=f_{\lambda}\circ h_{\lambda}$ on $J_{f_{\lambda_0}}$,
\item for any $z\in J_{f_{\lambda_0}}$, the map $\lambda\mapsto h_\lambda(z)$ is holomorphic on $\Lambda$,
\item $h_{\lambda_0}$ is the identity on $J_{f_{\lambda_0}}$.
\end{itemize}

\begin{lemma}\label{lm:bifJstab}
Let $(f,a)$ be any dynamical pair of $\p^1$ of degree $d\geq2$ parametrized by the unit disk $\D$. If $f$ is $J$-stable and $\supp(\mu_{f,a})\neq\emptyset$, we have
\[\supp(\mu_{f,a})=\{\lambda\in \D\, ; \ a(\lambda)\in  J_{f_\lambda}\}.\]
\end{lemma}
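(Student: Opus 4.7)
The plan is first to handle the easy inclusion $\supp(\mu_{f,a})\subseteq E:=\{\lambda\in\D : a(\lambda)\in J_{f_\lambda}\}$, which does not require the hypothesis that the support be non-empty. In a $J$-stable family, the fibered Green current $\widehat T$ on $\D\times\p^1$ is supported on the laminated set $\Sigma:=\{(\lambda,z):z\in J_{f_\lambda}\}$: thanks to the holomorphic motion $h$ of $J_{f_{\lambda_0}}$ and uniform escape to infinity over compacts of the complement, the homogeneous Green function $\widehat g(\lambda,\tilde z)=\lim_n d^{-n}\log\|F_\lambda^n(\tilde z)\|$ converges to a pluriharmonic function off $\Sigma$. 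Hence $\widehat T\wedge[\Gamma_a]$ is supported on $\Gamma_a\cap\Sigma$, whose projection to $\D$ is exactly $E$.

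For the reverse inclusion $E\subseteq\supp(\mu_{f,a})$ I argue by contradiction: I suppose some $\lambda_0\in E$ is not in $\supp(\mu_{f,a})$, so that $\mu_{f,a}$ vanishes on an open disk $V\ni\lambda_0$. I extend $h$ to $\D\times\p^1$ via Slodkowski's $\lambda$-lemma and define the holomorphic map $b:\D\to\p^1$ by $b(\lambda):=h_\lambda(a(\lambda_0))$. Then $b(\lambda_0)=a(\lambda_0)$, $b(\lambda)\in J_{f_\lambda}$ for all $\lambda$, and the conjugating property of $h$ combined with its uniform quasiconformality implies that $\{\lambda\mapsto f_\lambda^n(b(\lambda))=h_\lambda(f_{\lambda_0}^n(a(\lambda_0)))\}_n$ is equicontinuous on compacts of $\D$, so the pair $(f,b)$ is stable everywhere.

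The heart of the matter is to show that $a\equiv b$ on a neighborhood of $\lambda_0$, for then the identity principle on the connected disk $\D$ forces $a\equiv b$ globally and gives $\mu_{f,a}=\mu_{f,b}=0$, contradicting $\supp(\mu_{f,a})\neq\emptyset$. To establish the local coincidence I plan to use the description of $\widehat T$ near a point of $\Sigma$ as a transverse integral, via the motion $h$, of the leaves $\Gamma_z:=\{(\lambda,h_\lambda(z))\}$ weighted by the maximal entropy measure $\mu_{f_{\lambda_0}}$: the wedge $\widehat T\wedge[\Gamma_a]$ then accumulates positive mass near $(\lambda_0,a(\lambda_0))$ from every leaf $\Gamma_z$ with $z\in\supp(\mu_{f_{\lambda_0}})=J_{f_{\lambda_0}}$ that $\Gamma_a$ meets properly, so the vanishing of $\mu_{f,a}$ on $V$ will force $\Gamma_a$ to coincide locally with the leaf $\Gamma_{a(\lambda_0)}$. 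The hard part will be making this transverse-integral representation of $\widehat T$ rigorous when $J_{f_{\lambda_0}}$ has large Hausdorff dimension (notably when $J_{f_{\lambda_0}}=\p^1$); a safer alternative would be a direct Montel argument using the density of holomorphically-moving repelling cycles and the expansion of $f_{\lambda_0}$ on $J_{f_{\lambda_0}}$ to contradict normality of $\{f_\lambda^n\circ a\}_n$ on $V$ as soon as $a\not\equiv b$ near $\lambda_0$.
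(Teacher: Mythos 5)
Your overall skeleton is sound, and your first inclusion, while argued differently from the paper (which gets it from the density of transversely prerepelling parameters, Theorem~\ref{tm:density}, valid unconditionally when $k=1$), can be made to work --- though ``uniform escape to infinity'' is not the right justification: what you need is that off $\Sigma$ the maps $(\lambda,z)\mapsto f_\lambda^n(z)$ form a normal family (e.g.\ because they avoid three holomorphically moving repelling points), which makes the fibered Green potential pluriharmonic there. The genuine gap is in the decisive step, namely ``$a\equiv b$ near $\lambda_0$''. Your primary route rests on the representation $\widehat T=\int[\Gamma_z]\,d\mu_{f_{\lambda_0}}(z)$ over the stability locus. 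Knowing that both currents have the same fiberwise slices $\mu_{f_\lambda}$ (which does follow, since the motion conjugates the dynamics and the maximal entropy measure is unique) does \emph{not} give equality of the currents: the difference can be a pullback of a measure from the base, and no such uniform laminar structure of $\widehat T$ is established here or in the paper. The paper deliberately wedges only against laminar currents $\int[\Gamma_z]\,d\nu(z)$ built from a PLB measure $\nu$ carried by a hyperbolic subset (Proposition~\ref{goodhyperbolicset}), precisely because that is the setting in which Dujardin's intersection theorem applies; when $J_{f_{\lambda_0}}=\p^1$ --- the case that actually matters in this article --- there is no available argument for your global representation, so as written this route does not close.

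Your ``safer alternative'' is, in substance, the paper's actual proof, but you leave it as a one-clause sketch while it is the heart of the lemma. Concretely, what must be written out is: if $s(\lambda):=f_\lambda^m(a(\lambda))-h_\lambda^{\lambda_0}\bigl(f_{\lambda_0}^m(a(\lambda_0))\bigr)\not\equiv0$ near $\lambda_0$, its zero at $\lambda_0$ is isolated; by Rouch\'e--Hurwitz, for every $z\in J_{f_{\lambda_0}}$ close to $f_{\lambda_0}^m(a(\lambda_0))$ the function $\lambda\mapsto f_\lambda^m(a(\lambda))-h_\lambda^{\lambda_0}(z)$ still vanishes inside the stable disk $V$; choosing $z$ among repelling periodic points (dense in $J_{f_{\lambda_0}}$) yields a parameter $\lambda_1\in V$ at which $a$ is \emph{properly} prerepelling, and then Montel (or Theorem~\ref{tm:densitypart1}), together with $\mathrm{Bif}(f,a)=\supp(\mu_{f,a})$ from Lemma~\ref{lm:DF}, forces $\lambda_1\in\supp(\mu_{f,a})$, contradicting the vanishing of $\mu_{f,a}$ on $V$. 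Note also that the paper does not prove local coincidence and then globalize: it splits directly into two contradictions, and your ``identity principle plus $\supp(\mu_{f,a})\neq\emptyset$'' step is exactly its first case. Until the Hurwitz/Montel step above is actually carried out, the central point of your proposal is missing.
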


\begin{proof}
Since $\mathrm{Bif}(f,a)=\supp(\mu_{f,a})\neq\emptyset$, the set $D$ of parameters $\lambda_0\in\D$ such that $a$ is transversely prerepelling at $\lambda_0$ is a non-empty countable dense subset of $\mathrm{Bif}(f,a)$. As $J$-repelling points of $f_{\lambda_0}$ are contained in $J_{f_{\lambda_0}}$, this gives $\mathrm{Bif}(f,a)\subset \{\lambda\in \D\, ; \ a(\lambda)\in J_{f_\lambda}\}$.

Pick now $\lambda_0\in \{\lambda\in \D\, ; \ a(\lambda)\in J_{f_\lambda}\}$ and assume $\lambda_0\notin \mathrm{Bif}(f,a)$. Set $a_n(\lambda):=f_\lambda^n(a(\lambda))$ for all $n\geq0$ and all $\lambda\in\D$. Let $h:\D\times J_{f_{0}}\to\p^1$ be the unique holomorphic motion of $J_{f_0}$ parametrized by $\D$ such that, if $h_\lambda:=h(\lambda,\cdot)$, then
\[h_\lambda\circ f_{0}=f_\lambda\circ h_\lambda \ \text{on} \ J_{f_{0}}.\]
Note that for all $z\in J_{f_{0}}$, the sequence $\{\lambda\mapsto h_\lambda(f_{0}^n(z))\}_n$ is a normal family on $\D$.

Beware that for all periodic point $z\in J_{f_{0}}$ of $f_{0}$, the function $z(\lambda):=h_\lambda(z)$ is a marking of $z$ as a periodic point of $f_\lambda$.
For all $s\in\D$, if we let $h_\lambda^s:=h_\lambda\circ h_{s}^{-1}$. The family $(h_\lambda^s)_\lambda$ is a holomorphic motion of $J_{f_s}$ which satisfies 
\[h_\lambda^s\circ f_{s}=f_\lambda\circ h_\lambda^s \ \text{on} \ J_{f_s},\]
for all $\lambda\in\D(s,1-|s|)$.
Since we assumed $\lambda_0\notin\mathrm{Bif}(f,a)$, there exists $\epsilon>0$ such that $\D(\lambda_0,\epsilon)\cap\mathrm{Bif}(f,a)=\emptyset$ and we can choose an affine chart of $\p^1$ such that $a_n(\lambda)$ and $h_\lambda^{\lambda_0}(a_n(\lambda_0))$ lie in this chart for all $n\geq1$ and all $\lambda\in\D(\lambda_0,\epsilon)$. 
For all $n$, set
\[s_n(\lambda):=a_n(\lambda)-h_\lambda^{\lambda_0}(a_n(\lambda_0)), \ \lambda\in\D(\lambda_0,\epsilon).\]
Assume first $s_m\equiv 0$ on $\D(\lambda_0,\epsilon)$ for some $m\geq0$. This implies $a_m(\lambda)=h_\lambda(a_m(0))$ for all $\lambda\in\D(\lambda_0,\epsilon)$. By the Isolated Zero Theorem, we thus have
\[a_m(\lambda)=h_\lambda(a_m(0)) \ \text{for all} \ \lambda\in\D.\]
As $h_\lambda\circ f_0=f_\lambda\circ h_\lambda$, this yields $a_n(\lambda)\equiv h_\lambda(a_n(0))$ for all $n\geq m$, and $(a_n)$ is a normal family on $\D$. This is a contradiction, since we assumed $\mathrm{Bif}(f,a)\neq\emptyset$. We thus may assume $s_m\not\equiv0$ on $\D(\lambda_0,\epsilon)$. In particular, up to reducing $\epsilon$, we may assume $s_m(\lambda)\neq0$ for all $\lambda\in\D(\lambda_0,\epsilon)\setminus\{\lambda_0\}$. Let $z_0:=a_m(\lambda_0)$. By Rouch\'e Theorem, there exists $\eta>0$ such that for any $z\in \D(z_0,\eta)\cap J_{f_{\lambda_0}}$, the function
\[s_{m,z}(\lambda):=a_m(\lambda)-h_\lambda^{\lambda_0}(z)\]
has finitely many isolated zeros in $\D(\lambda_0,\epsilon)$. As repelling periodic points are dense in $J_{f_{\lambda_0}}$, there exists $z_1\in\D(z_0,\eta)\cap J_{f_{\lambda_0}}$ which is $f_{\lambda_0}$-periodic and repelling. The implies there exists $\lambda_1\in\D(\lambda_0,\epsilon)$ such that $a$ is properly prerepelling at $\lambda_1$. Finally, Theorem~\ref{tm:densitypart1}  (or simply Montel Theorem in this case) gives $\lambda_1\in\mathrm{Bif}(f,a)$ ending the proof.
\end{proof}

Using Montel theorem, one can deduce Theorem~\ref{prop:totalimpliesJstable}.

\begin{proof}[Proof of Theorem~\ref{prop:totalimpliesJstable}]
Assume first $f$ is $J$-stable. Note first that $J_{f_\lambda}=\p^1$ is true for some $\lambda\in\Lambda$ if and only if it is true for all $\lambda\in\Lambda$ in this case.

Assume now that $J_{f_\lambda}\neq \p^1$ for all $\lambda\in \Lambda$. Denote by $F_{f_\lambda}:=\p^1\setminus J_{f_\lambda}$ the Fatou set of $f_\lambda$.
By \cite[Theorem 7.8]{McM-S}, there exists a countable union of proper analytic subvariety $S\subset \Lambda$ such that $\Lambda\setminus S$ is open and, for any topological disk $D\subset \Lambda\setminus S$ (centered at some $\lambda_0$), there exists a unique holomorphic motion $\phi:D\times\p^1\to\p^1$ which conjugates $f_{\lambda_0}$ to $f_\lambda$ on $\p^1$. In particular the set
$\{(\lambda,z)\in D\times\p^1\, : \ z\in F_{f_\lambda}\}$ is a non-empty open subset of $D\times \p^1$. As we assumed $\mathrm{Bif}(f,a)=\Lambda$, the sequence $\{\lambda\longmapsto f_\lambda^n(a(\lambda))\}_{n\geq1}$ is not a normal family on $D$. By Montel Theorem, there exists $n\geq1$, $1\leq i\leq p$ and $\lambda_1\in D$ such that $f_{\lambda_1}^n(a(\lambda_1))\in  F_{f_{\lambda_1}}$, hence $a(\lambda_1)\in F_{f_{\lambda_1}}$.
However, Lemma~\ref{lm:bifJstab} gives
\[D=\mathrm{Bif}(f,a)\cap D=\{\lambda\in D\, : \ a(\lambda)\in J_{f_\lambda}\},\]
whence $a(\lambda_1)\in J_{f_{\lambda_1}}$. This is a contradiction.
This implies $J_{f_\lambda}=\p^{1}$ for all $\lambda\in\Lambda$. Finally, by Lemma V.1 of \cite{MSS}, if $f$ is not trivial, this implies $f_\lambda$ has an invariant linefield on its Julia set for all $\lambda\in\Lambda$.

\smallskip

If $f$ is not $J$-stable, by Montel Theorem,  there exists a non-empty open set $U$ of $\Lambda$ such that $(f_\lambda)_{\lambda\in U}$ is $J$-stable with an attracting periodic $z_1,\ldots,z_p$ of period $p\geq3$, and we proceed as follows: pick a topological disk $D\subset U$. Then there exists holomorphic functions $z_1,\ldots,z_p:D\to\p^1$ which paramerize this attracting cycle. In particular, $z_i(\lambda)\neq z_j(\lambda)$ for all $i\neq j$ and all $\lambda\in D$. Since we assumed $\mathrm{Bif}(f,a)=\Lambda$, the sequence $\{\lambda\longmapsto f_\lambda^n(a(\lambda))\}_{n\geq1}$ is not a normal family on $D$. By Montel Theorem, there exists $n\geq1$, $1\leq i\leq p$ and $\lambda_0\in D$ such that
\[f_{\lambda_0}^n(a(\lambda_0))=z_i(\lambda_0).\]
By Lemma~\ref{lm:bifJstab}, since $\lambda_0\in\mathrm{Bif}(f,a)$ this implies $z_i(\lambda_0)\in J_{f_{\lambda_0}}$. This is a contradiction with the fact that $z_i$ is attracting.
\end{proof}

\subsection{Proof of Theorem \ref{tm:rigidLattes} and the isotrivial case}

\begin{proof}[Proof of Theorem \ref{tm:rigidLattes}]
Remark that points $1.$ and $2.$ are equivalent by Theorem~\ref{tm:density}.
We first prove 1. implies 4. Assume $\mathrm{Bif}(f,a)=\Lambda$. By Theorem~\ref{prop:totalimpliesJstable} the family $f$ is $J$-stable. As $\Lambda$ is a quasi-projective manifold, by \cite[Theorem~2.4]{McM-stable}, since $f$ is not isotrivial, $f$ is a family of Latt\`es maps. 

We now prove 4. implies 1. We thus assume that $f$ is a non-isotrivial family of Latt\`es and that $\mu_{f,a}$ is non-zero. 
Recall that, since $f$ is a family of Latt\`es maps, it is stable. By Lemma~\ref{lm:bifJstab}, the set $\mathrm{Bif}(f,a)$ coincides with $\{\lambda\in\Lambda\, : \ a(\lambda)=J_{f_\lambda}\}=\Lambda$ since $J_{f_\lambda}=\p^1$ for all $\lambda\in\Lambda$.

\medskip 

The equivalence between 3. and 4. follows from Theorem~\ref{tm:rigidlattes-dim1} and the equivalence between 1. and 4. 
\end{proof}

Recall that when $f$ is isotrivial, either $J_{f_\lambda}=\p^{1}$ for all $\lambda$, or $J_{f_\lambda}\neq\p^{1}$ for all $\lambda$. We conclude this section with the following easy proposition, which clarifies the case when $f$ is isotrivial.

\begin{proposition}\label{prop:isotrivial}
Let $f$ be an isotrivial algebraic family parametrized by an irreducible quasiprojective curve $\Lambda$ and let $a:\Lambda\to\p^1$ be such that the pair $(f,a)$ is unstable. 
The following are equivalent:
\begin{enumerate}
\item the Julia set of $f_\lambda$ is $J_{f_\lambda}=\p^1$ for all $\lambda\in\Lambda$,
\item the bifurcation locus of $(f,a)$ contains a non-empty open set,
\item the bifurcation locus of $(f,a)$ is $\mathrm{Bif}(f,a)=\Lambda$.
\end{enumerate}
\end{proposition}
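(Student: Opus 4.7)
The plan is to use isotriviality to reduce to a trivial family, and then show that the bifurcation locus of the resulting pair coincides (up to a discrete set) with the preimage of the Julia set by the transported marked point; the three conditions then fall out from the standard dichotomy between $J_g=\p^1$ and $J_g$ having empty interior.

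First, I would invoke isotriviality to obtain a branched cover $\pi:X\to\Lambda$, a holomorphic family $M:X\times\p^1\to\p^1$ of M\"obius transformations and a rational map $g$ of degree $d$ such that $M_\xi\circ f_{\pi(\xi)}\circ M_\xi^{-1}=g$ for every $\xi\in X$. Setting $b(\xi):=M_\xi(a(\pi(\xi)))$ gives
\[f_{\pi(\xi)}^n(a(\pi(\xi)))=M_\xi^{-1}\bigl(g^n(b(\xi))\bigr).\]
Because $\pi$ is a finite branched cover and $M_\xi$ is a M\"obius transformation varying holomorphically in $\xi$, normality of the orbit of $a$ at a point $\lambda_0\in\Lambda$ is equivalent to normality of $\{g^n\circ b\}_n$ at every $\xi_0\in\pi^{-1}(\lambda_0)$. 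Since $\pi$ is open, this translates each of (1)--(3) on $(f,a,\Lambda)$ into the corresponding statement on $(g,b,X)$ (using also $J_{f_{\pi(\xi)}}=M_\xi^{-1}(J_g)$ so that $J_{f_\lambda}=\p^1$ iff $J_g=\p^1$). Note that $b$ must be non-constant: otherwise $\{g^n(b)\}$ is a sequence of constants on $X$, which is trivially normal by compactness of $\p^1$, contradicting the instability of $(f,a)$.

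The key step is then the identification $\mathrm{Bif}(g,b)=b^{-1}(J_g)$ up to a discrete set. If $b(\xi_0)\in F_g$, I would pick a neighborhood $U$ of $b(\xi_0)$ on which $\{g^n|_U\}_n$ is normal; the family $\{g^n\circ b\}_n$ is then normal on the neighborhood $b^{-1}(U)$ of $\xi_0$ by composition, yielding $\mathrm{Bif}(g,b)\subseteq b^{-1}(J_g)$. Conversely, at a regular point $\xi_0$ of $b$ with $b(\xi_0)\in J_g$, a local holomorphic inverse $\sigma$ of $b$ near $b(\xi_0)$ would give $g^n=(g^n\circ b)\circ\sigma$, so normality of $\{g^n\circ b\}_n$ at $\xi_0$ would force normality of $\{g^n\}_n$ near $b(\xi_0)\in J_g$, a contradiction. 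Since $\mathrm{Crit}(b)$ is discrete in $X$, this reverse inclusion holds on $b^{-1}(J_g)\setminus\mathrm{Crit}(b)$, which is all I need below.

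Finally, I split by whether $J_g=\p^1$. If $J_g=\p^1$, then $b^{-1}(J_g)=X$, so $\mathrm{Bif}(g,b)$ contains the open dense set $X\setminus\mathrm{Crit}(b)$ and, being closed, equals $X$; hence $\mathrm{Bif}(f,a)=\Lambda$, proving (1)$\Rightarrow$(3). The implication (3)$\Rightarrow$(2) is tautological. If $J_g\neq\p^1$ then $J_g$ has empty interior (a standard property of Julia sets of rational maps of degree $\geq2$); since $b$ is non-constant and therefore open, $b^{-1}(J_g)$ has empty interior, and hence so does $\mathrm{Bif}(g,b)$, yielding (2)$\Rightarrow$(1) by contraposition. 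I do not foresee any serious obstacle here: the only mild technical point is the discrete critical set $\mathrm{Crit}(b)$ appearing in the reverse inclusion, but it is harmless because $\mathrm{Bif}(g,b)$ is closed.
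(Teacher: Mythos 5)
Your proof is correct and follows the same overall route as the paper: reduce to a trivial family via isotriviality, identify the bifurcation locus with the preimage of the Julia set under the transported marked point, then invoke the dichotomy that $J_g$ is either $\p^1$ or has empty interior together with openness of the non-constant holomorphic map $b$. The one genuine difference is in the key identification $\mathrm{Bif}(g,b)=b^{-1}(J_g)$: the paper derives the exact equality by applying its Lemma~\ref{lm:bifJstab} (which handles general $J$-stable families via holomorphic motions and a Rouch\'e argument) to the special case of a constant family, whereas you prove only the equality away from the discrete set $\mathrm{Crit}(b)$, but by a much more elementary local-inversion argument (since $b$ is a non-constant map between Riemann surfaces, $g^n=(g^n\circ b)\circ(b|_U)^{-1}$ near a regular point, so normality of the parametric family would transfer to normality of $g^n$ at a Julia point). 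This suffices because $\mathrm{Bif}(g,b)$ is closed, as you correctly note. Your version is self-contained and does not need the machinery of Lemma~\ref{lm:bifJstab}, at the modest cost of only getting the identification up to a discrete set; the paper's route gets the clean identity at no extra cost since the lemma is available anyway, having been proved for use in Theorem~\ref{prop:totalimpliesJstable}.
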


\begin{remark}\normalfont
In fact, in the isotrivial case we can also prove the following are equivalent:
\begin{enumerate}
\item the family $f$ is an isotrivial family of Latt\`es maps,
\item the measure $\mu_{f,a}$ is absolutely continuous with respect to $\omega_\Lambda$.
\end{enumerate}
\end{remark}

\begin{proof}
If $\mathrm{Bif}(f,a)=\Lambda$, obviously, it contains a non-empty open subset of $\Lambda$.
Now, as $f$ is isotrivial, up to taking a finite branched cover of $\Lambda$ and up to conjugating $f$ by a family of M\"obius transformations, we can assume $f_\lambda=f_0$ for all $\lambda\in\Lambda$. In particular, it is a $J$-stable family and., applying Lemma~\ref{lm:bifJstab} in local charts, we find
\[\mathrm{Bif}(f,a)=\{\lambda\in \Lambda\, ; \ a(\lambda)\in  J_{f_0}\}=a^{-1}(J_{f_0}).\]
Since $\mathrm{Bif}(f,a)\neq\varnothing$, the holomorphic map $a$ has to be non-constant, whence it is open. In particular, if $\mathrm{Bif}(f,a)$ contains a non-empty open set, $J_{f_0}$ has to contain a nonempty open set and $J_{f_0}=\p^1$. Finally, if $J_{f_0}=\p^1$, then we clearly have $\mathrm{Bif}(f,a)=a^{-1}(\p^1)=\Lambda$.

Assume first $J_{f_\lambda}=\p^1$ for all $\lambda\in\Lambda$. 
When $\mu_{f,a}$ is absolutely continuous, the conclusion follows as in the proof of Theorem \ref{tm:rigidLattes}.
\end{proof}

 \subsection{Concluding remarks and questions}

\paragraph*{Dynamical pairs with a non-singular bifurcation measure}
First, when $k>1$, the statement of Theorem \ref{tm:rigidlattes} holds only if all repelling $J$-cycles are linearizable.

This results raises several questions:

\begin{question}
Can we generalize Theorem \ref{tm:rigidlattes} to the cases when
\begin{enumerate}
\item There exists $J$-repelling cycles that are non-linearizable?
\item $T_{f,a}^k$ is just \emph{non-singular} with respect to a smooth volume form?
\end{enumerate}
\end{question}

The first question is very likely to have a positive answer, using Poincar\'e-Dulac normal forms instead of linear normal forms. However, it looks quite difficult to prove rigorously.

\medskip

In fact, Zdunik \cite{Zdunik} completely classifies rational maps with a maximal entropy measure which is not singular with respect to a Hausdorff measure $\mathcal{H}^{\alpha}$: either $\alpha=1$ and the rational map is conjugated to a monomial map $z^{\pm d}$ or to a Chebichev polynomial $T_d$, i.e. the only polynomial satisfying 
$T_d\left(z+\frac{1}{z}\right)=z^d+\frac{1}{z^d}$ for all $z\in\C$, or $\alpha=2$ and the rational map is a Latt\`es map.

We expect the following complete parametric counterpart to \cite{Zdunik} to be true:

\begin{question}\label{tm:classification}
Let $(f,a)$ be any holomorphic dynamical pair of $\p^1$ of degree $d\geq2$ parametrized by the unit disk $\D$ of $\C$. Assume that $(f,a)$ is unstable.
Assume also there exists $\alpha>0$ and a function $h:\D\to\R_+$ such that $\mu_{f,a}=h\cdot \mathcal{H}^{\alpha}$ on $\D$.
Can we prove that 
\begin{itemize}
\item either $\alpha=2$ and $f$ is a family of Latt\`es maps,
\item or $\alpha=1$, $f$ is isotrivial and all $f_\lambda$'s are conjugated to $z^{\pm d}$ or a Chebichev polynomial?
\end{itemize}
\end{question}

As in the case of families of Latt\`es maps, we can expect the proof to generalize to the case when $k>1$. This raises the following question.

\begin{question}
Classify endomorphisms of $\mathbb{P}^{k}$ which maximal entropy measure is not singular with respect to some Hausdorff measure $\mathcal{H}^\alpha$ on $\mathbb{P}^{k}$ (and 
possible values of $\alpha$).
\end{question}

As seen above, the case $\alpha=2k$ has been treated by Berteloot and Loeb \cite{BL} and Berteloot and Dupont \cite{BertelootDupont}. Of course, there are also easy examples where $\alpha=k$: take $f:\mathbb{P}^1\to\mathbb{P}^1$ which maximal entropy measure has dimension $1$, then 
the endomorphism
$F:\p^{k}\longrightarrow\p^{k}$ making the following diagram commute
\begin{center}
$\xymatrix {\relax
(\p^1)^k \ar[r]^{(f,\ldots,f)} \ar[d]_{\eta_k} & (\p^1)^k\ar[d]^{\eta_k} \\
\p^{k} \ar[r]
_{F} & \p^{k}}$
\end{center}
where $\eta_k$ is the quotient map of the action by permutation of coordinates of the symmetric group $\mathfrak{S}_k$, satisfies $\dim(\mu_F)=k$ (see \cite{GHK-Symm} for a study of symmetric products).

\paragraph*{$J$-stability and dynamical pairs, when $k\geq2$}
We say that a family $f:\Lambda\times\p^k\longrightarrow\p^k$ of degree $d\geq2$ endomorphisms of $\p^k$ is \emph{weakly $J$-stable} if all the $J$-repelling cycles can be followed holomorphically throughout the whole family $\Lambda$, i.e. if for all $n\geq1$, there exists $N\geq0$ and holomorphic maps $z_1,\ldots,z_N:\Lambda\rightarrow\p^k$ such that $\{z_1(\lambda),\ldots,z_N(\lambda)\}$ is exactly the set of all repelling $J$-cycles of $f_\lambda$ of exact period $n$ for all $\lambda\in\Lambda$.

\medskip

For any endomorphism $f$ of $\p^k$, let $L(f):=\int_{\p^k}\log|\det Df|\mu_f$ be the sum of the Lyapunov exponents of $f$ with respect to its Green measure $\mu_f$. By a beautiful result of Berteloot, Bianchi and Dupont~\cite{BBD}, $f$ is $J$-stable if and only if $\lambda\longmapsto L(f_\lambda)$ is pluriharmonic on $\Lambda$.

A natural question is then the following:

\begin{question}
Given any dynamical pair $(f,a)$ of degree $d$ of the projective space $\mathbb{P}^{k}$ parametrized by the unit ball $\mathbb{B}\subset\C^k$ such that $f$ is a weakly $J$-stable family, do we still have
\[\mathrm{Supp}(T_{f,a}^{k})=\{\lambda\in\mathbb{B}\, ; \ a(\lambda)\in J_{f_\lambda}\} \ ?\]
\end{question}

Note that this holds for $k=1$ by Lemma~\ref{lm:bifJstab}. One of the difficulties, when $k>1$, is that the weak $J$-stability is equivalent to the existence of a \emph{branched} holomorphic motion.

\bibliographystyle{short}
\bibliography{biblio}
\end{document}